\definecolor{Gray}{gray}{0.9}
\newcommand{\until}[1]{\{1,\dots, #1\}}
\newcommand{\subscr}[2]{#1_{\textup{#2}}}
 \newcommand{\setdef}[2]{\{#1
  \; | \; #2\}}
\newcommand{\map}[3]{#1: #2 \rightarrow #3}
\newcommand{\union}{\operatorname{\cup}}
\renewcommand{\dim}{\operatorname{dim}}
\newcommand{\intersection}{\ensuremath{\operatorname{\cap}}}
\DeclareMathOperator{\sinc}{sinc}
\newcommand\oprocendsymbol{\hbox{$\triangle$}}
\newcommand\oprocend{\relax\ifmmode\else\unskip\hfill\fi\oprocendsymbol}
\DeclareSymbolFont{bbold}{U}{bbold}{m}{n}
\DeclareSymbolFontAlphabet{\mathbbold}{bbold}
\newcommand{\vect}[1]{\mathbbold{#1}}
\newcommand{\scirc}{\raise1pt\hbox{$\,\scriptstyle\circ\,$}}
\newcommand{\real}{\mathbb{R}}
\newcommand{\torus}{\mathbb{T}} \newcommand{\N}{\mathbb{N}}
\newtheorem{theorem}{Theorem}
\newtheorem{lemma}[theorem]{Lemma}
\newtheorem{corollary}[theorem]{Corollary}
\newtheorem{remark}[theorem]{Remark}
\newtheorem{example}[theorem]{Example}
\newtheorem{definition}[theorem]{Definition}
\newcommand{\fK}{\subscr{f}{K}} \newcommand{\imagunit}{\mathrm{i}}
\newcommand{\bperp}{\mathrm{B}^{G}(\gamma)}
\newcommand{\prj}{\mathcal{P}}
\newcommand{\R}{\subscr{R}{eff}}
\DeclareMathOperator{\diag}{diag}
\DeclareMathOperator{\spn}{span}
\DeclareMathOperator{\Ker}{\mathrm{Ker}}
\DeclareMathOperator{\Img}{\mathrm{Img}}
\renewcommand{\top}{\mathsf{T}} 
\title{Synchronization of Kuramoto Oscillators \\ via Cutset
  Projections}
\author{Saber~Jafarpour,~\IEEEmembership{Member,~IEEE,}
        and Francesco~Bullo,~\IEEEmembership{Fellow,~IEEE}

\IEEEcompsocitemizethanks{
  \IEEEcompsocthanksitem This work was supported in part by the
    U.S. Department of Energy (DOE) Solar Energy Technologies Office
    under Contract No. DE-EE0000-1583.
  \IEEEcompsocthanksitem Saber Jafarpour, and Francesco
  Bullo are with the Mechanical Engineering Department and the Center
  of Control, Dynamical-Systems and Computation, UC Santa Barbara, CA
  93106-5070, USA. {\tt
    \{saber.jafarpour,bullo\}@engineering.ucsb.edu}}}
\begin{document}

\maketitle

\begin{abstract}
  Synchronization in coupled oscillators networks is a remarkable
  phenomenon of relevance in numerous fields. For Kuramoto oscillators the
  loss of synchronization is determined by a trade-off between coupling
  strength and oscillator heterogeneity. Despite extensive prior work, the
  existing sufficient conditions for synchronization are either very
  conservative or heuristic and approximate. Using a
    novel cutset projection operator, we propose a new family of
  sufficient synchronization conditions; these conditions rigorously
  identify the correct functional form of the trade-off between coupling
  strength and oscillator heterogeneity. To overcome the need to solve a
  nonconvex optimization problem, we then provide two explicit bounding
  methods, thereby obtaining (i) the best-known sufficient condition
  for unweighted graphs based on the 2-norm, and (ii) the
  first-known generally-applicable sufficient condition based on the
  $\infty$-norm. We conclude with a comparative study of our novel $\infty$-norm
  condition for specific topologies and IEEE test cases; for most IEEE
  test cases our new sufficient condition is one to two orders of
  magnitude more accurate than previous rigorous tests.
\end{abstract}

\begin{IEEEkeywords}
  Kuramoto oscillators, frequency synchronization, synchronization manifold,
  cutset projection.
\end{IEEEkeywords}

\section{Introduction}

\paragraph{Problem description and literature review}

The phenomenon of collective synchronization appears in many different
disciplines including biology, physics, chemistry, and engineering. In
the last few decades, many fundamental contributions have been made in
providing and analyzing suitable mathematical models for
synchronizations of coupled oscillators~\cite{NW:1958,ATW:67}. Much
recent interest in studying synchronization has focused on systems
with finite number of oscillators coupled through a nontrivial
topology with arbitrary weights. Consider a system consists of $n$
oscillators, where the $i$th oscillator has a natural rotational
frequency $\omega_i$ and its dynamics is described using the phase
angle $\theta_i\in \mathbb{S}^1$. When there is no interaction between
oscillators, the dynamics of $i$th oscillator is governed by the
differential equation $\dot{\theta}_i=\omega_i$. One can model the
coupling between oscillators using a weighted undirected graph $G$,
where the interaction between oscillators $i$ and $j$ is proportional
to sin of the phase difference between angles $\theta_i$ and
$\theta_j$. This model, often referred to as the Kuramoto model, is
one of the most widely-used model for studying synchronization of
finite population of coupled oscillators. The Kuramoto model and its
generalizations appear in various applications including the study of
pacemaker cells in heart~\cite{DCM-EPM-JJ:87}, neural
oscillators~\cite{EB-JM-PH:04}, deep brain simulation~\cite{PAT:03},
spin glass models~\cite{GJ-JA-DB-ACCC-CPV:01}, oscillating
neutrinos~\cite{JP:98}, chemical oscillators~\cite{IZK-YZ-JLH:02},
multi-vehicle coordination~\cite{RS-DP-NEL:07},
synchronization of smart grids~\cite{FD-MC-FB:11v-pnas}, security
analysis of power flow equations~\cite{AA-SS-VP:81}, optimal
generation dispatch~\cite{JL-SHL:12}, and droop-controlled inverters
in microgrids~\cite{MCC-DMD-RA:93, JWSP-FD-FB:12u}.

Despite its apparent simplicity, the Kuramoto model gives rise to very
complex and fascinating behaviors~\cite{FD-FB:13b}. A fundamental
question about the synchronization of coupled-oscillators networks is
whether the network achieves synchronization for a given set of
natural frequencies, graph topology, and edge weights.  While various
notions of synchronization in Kuramoto models have been proposed,
phase synchronization and frequency synchronization are arguably the
most fundamental. A network of coupled oscillators is in phase
synchronization if all the oscillators achieve the same phase and it
is in frequency synchronization if all the oscillators achieve the
same frequency. While phase synchronization is only achievable for
uniform frequencies irrespective of the network
structure~\cite{AJ-NM-MB:04, RS-DP-NEL:07, PM-FP:05}, frequency
synchronization in Kuramoto oscillators is possible for arbitrary
frequencies, but depends heavily on the network topology and weights.

\paragraph{Prior sufficient or necessary conditions for frequency
  synchronization}

Frequency synchronization of Kuramoto oscillators has been studied
using various approaches in different scientific communities. In the
physics and dynamical systems communities, in the limit as number of
oscillators tends to infinity, the Kuramoto model is analyzed as a
first-order continuity equation~\cite{YK:84,GBE:85}. In
the control community, much interest has focused on the finite numbers
of oscillators and on connections with graph theory. The first
rigorous characterization of frequency synchronization is developed
for the complete unweighted
graphs~\cite{DA-JAR:04,REM-SHS:05,MV-OM:08}. The
works~\cite{DA-JAR:04,REM-SHS:05} present implicit algebraic equations
for the threshold of synchronization together with local stability
analysis of the synchronization manifolds. The same set of equations
has been reported in~\cite[Theorem 3]{MV-OM:08}, where a bisection
algorithm is proposed to compute the synchronization
threshold. Moreover, \cite[Theorem 4.5]{MV-OM:09} presents a
synchronization analysis for complete unweighted bipartite graphs. Via
nonsmooth Lyapunov function methods, \cite[Theorem 4.1]{FD-FB:10w}
characterizes the case of complete unweighted graphs with arbitrary
frequencies in a fixed compact support. For acyclic graphs a necessary
and sufficient condition for frequency synchronization is presented
in~\cite[Remark 10]{AJ-NM-MB:04} and~\cite[Theorem
  2]{FD-MC-FB:11v-pnas}. Inspired by this characterization for acyclic
graphs and using an auxiliary fixed-point equation, a sufficient
condition for synchronization of ring graphs is proved
in~\cite[Theorem 3, Condition 3]{FD-MC-FB:11v-pnas}.

Unfortunately, none of the techniques mentioned above can be extended
for characterizing frequency synchronization of Kuramoto model with
general topology and arbitrary weights. The early works~\cite[\S
  VII(A)]{AJ-NM-MB:04}~\cite[Theorem 2.1]{NC-MWS:08}
present necessary conditions for synchronization. As of today, the
sharpest known necessary conditions are given by~\cite{NA-SG:13} and
are associated to the cutsets of the graph. Beside these necessary
conditions, numerous different sufficient conditions have also been
derived in the literature. The intuition behind most of these
conditions is that the Kuramoto model achieves frequency
synchronization when the couplings between the oscillators dominate
the dissimilarities in the natural frequencies.
An ingenious approach based on graph theoretic ideas is proposed
in~\cite{AJ-NM-MB:04}: if $2$-norm of the natural frequencies of the
oscillators is bounded by some connectivity measure of the graph, then
the network achieves a locally stable frequency
synchronization~\cite[Theorem 2]{AJ-NM-MB:04}. Other $2$-norm
conditions have been derived in the literature using quadratic
Lyapunov function~\cite[Theorem 4.2]{NC-MWS:08}~\cite[Theorem
  4.4]{FD-FB:09z} and sinusoidal Lyapunov function~\cite[Proposition
  1]{AF-AC-WPL:10}. To the best of our knowledge, the tightest
$2$-norm sufficient condition for existence of stable synchronization
manifolds for general topologies is given
by~\cite[Theorem~4.7]{FD-FB:12i}. Moreover, using numerical simulation
on random graphs and IEEE test cases, it is shown that the necessary and
sufficient condition for synchronization of acyclic graph can be
considered as a good approximation for frequency synchronization of a
large class of graphs~\cite{FD-MC-FB:11v-pnas}. Despite all these deep
and fundamental works, up to date, the gap between the necessary and
sufficient conditions for frequency synchronization of Kuramoto model
is in general huge and the problem of finding accurate and provably
correct synchronization conditions is far from resolved. Finally, we
mention that, parallel to the above analytical results, a large body
of literature in synchronization is devoted to the numerical analysis
of synchronization for specific random graphs such as small-world and
scale free networks~\cite{MB-LMP:02,TN-AEM-YCL-FCH:03,YM-AFP:04}. We
refer the interested readers to~\cite{JAA-LLB-CJPV-FR-RS:05,FD-FB:13b,
  AA-ADG-JK-YM-CZ:08} for survey of available results on frequency
synchronization and region of attraction of the synchronized manifold
as well as to~\cite{GS-AP-UM-FA:12, YW-FJD:13,EM-RAF-AKT:16} for
examples of recent developments and engineering applications.

\paragraph{Contributions}
As preliminary contributions, first, for a given weighted undirected
graph $G$, we introduce the cutset projection matrix of $G$, as the
oblique projection onto the cutset space of $G$ parallel to the
weighted cycle space of $G$. We find a compact matrix form for the
cutset projection of $G$ in terms of incidence matrix and Laplacian
matrix of $G$ and study its properties, including its $\infty$-norm
for acyclic, unweighted complete graphs and unweighted ring graphs.
Secondly, for a given graph $G$ and angle $\gamma\in[0,\pi)$, we
  introduce the embedded cohesive subset $S^{G}(\gamma)$ on the
  $n$-torus.  This subset is larger than the arc subset, but smaller
  than the cohesive subset studied in~\cite{FD-FB:09z,FD-FB:13b}.  We
  present an explicit algorithm for checking whether an element of the
  $n$-torus is in $S^{G}(\gamma)$ or not. We show that, for a network
  of Kuramoto oscillators, achieving locally exponentially stable
  frequency synchronization and existence of a synchronization
  manifold are equivalent in the domain $S^{G}(\gamma)$, for every
  $\gamma\in [0,{\pi}/{2}]$.

Our main contribution is a new family of sufficient conditions for the
existence of synchronized solutions to a network of Kuramoto
oscillators.
We start by using the cutset projection operator to rewrite the
Kuramoto equilibrium equation in an equivalent edge balance form.
Our first and main set of sufficient conditions for synchronization is
obtained via a concise proof that exploits this edge balance form and
the Brouwer Fixed-Point Theorem. These conditions require the norm of
the edge flow quantity $B^{\top}L^{\dagger}\omega$ to be smaller than
a critical threshold; here $L$ is the graph Laplacian and $B$ is the
(oriented) incidence matrix.
This first main set of conditions have various advantages and one
disadvantage.
The first advantage is that the conditions apply to any graph
topology, edge weights, and natural frequencies.
The second advantage is that the conditions are stated with respect to
an arbitrary norm; in other words, one can select or design a
preferable norm to express the condition in.
Finally, our conditions bring clarity to a conjecture arising
in~\cite{FD-MC-FB:11v-pnas}: while focusing on separated connectivity
and heterogeneity measures results in overly-conservative estimates of
the synchronization threshold, using combined measures leads to
tighter estimates. Building on the work in~\cite{FD-MC-FB:11v-pnas},
our novel approach establishes the role of the combined connectivity
and heterogeneity measure $B^{\top}L^{\dagger}\omega$ and results in
sharper synchronization estimates.

The disadvantage of our first main set of conditions is that the
critical threshold is equal to the minimum amplification factor of a
scaled projection operator, that is, to the solution of a nonconvex
minimization problem.  Instead of focusing on this minimization
problem, we here contribute two explicit lower bounds on the critical
threshold and two corresponding sufficient conditions for
synchronization. First, when $p=2$ and the graph is unweightd, we present an explicit lower bound
on the critical threshold which leads to a sharper synchronization
test than the best previously-known $2$-norm test in the
literature. Second, we present a general lower bound on the critical
threshold which leads to a family of explicit $p$-norm tests for
synchronization. For $p\ne2$, these $p$-norm tests are the first
rigorous conditions of their kind. In particular, for $p=\infty$, the
$\infty$-norm test establishes rigorously a modified version of the
approximate test proposed in~\cite{FD-MC-FB:11v-pnas}. Specifically,
while the test proposed in~\cite{FD-MC-FB:11v-pnas} was already shown
to be inaccurate for certain counterexamples, our $\infty$-norm test
here is a correct, more-conservative, and generically-applicable
version of it.

One additional advantage of this work is that our unifying technical
approach is based on a single concise proof method, from which various
special cases are obtained as corollaries. In particular, we show that
our sufficient conditions are: equal to those in the literature for
acyclic graphs, sharper than those in the literature for unweighted
ring graphs, and slightly more conservative than those in the
literature for unweighted complete graphs. Finally, we apply our
$\infty$-norm test
to a class of IEEE test cases from the MATPOWER
package~\cite{RDZ-CEMS-RJT:11}. We measure a test accuracy as a
percentage of the numerically-computed exact threshold.  For IEEE test
cases with number of nodes in the approximate range
$100$\textemdash{}$2500$, we show how our test improves the accuracy
of the sufficient synchronization condition from $0.11\%\textendash{}
0.29 \%$ to $23.08 \% \textendash{} 43.70\%$.

\paragraph{Paper organization}
In Section~\ref{sec:kuramoto}, we review the Kuramoto model. In
Section~\ref{sec:perliminary}, we present some preliminary results,
including the cutset projection operator.
Section~\ref{sec:synchronization_conditions} contains this paper's
main results the new family of $p$-norm synchronization
tests. Finally, Section~\ref{sec:comparison} is devoted to a
comparative analysis of the new sufficient conditions.

\paragraph{Notation}\label{sec:notation}
For $n\in \N$, let $\vect{1}_n$ (resp. $\vect{0}_n$) denote the vector
in $\real^n$ with all entries equal to $1$ (resp. 0), and define the
vector subspace $\vect{1}_n^{\perp}=\setdef{x\in
  \real^n}{\vect{1}_n^{\top}x=0}$.  For $n\in \N$, the $n$-torus and
$n$-sphere are denoted by $\mathbb{T}^n$ and $\mathbb{S}^n$,
respectively. Given two points $\alpha,\beta\in \mathbb{S}^1$, the
clockwise arc-length between $\alpha$ and $\beta$ and the
counterclockwise arc-length between $\alpha$ and $\beta$ are denoted
by $\mathrm{dist}_{c} (\alpha,\beta)$ and $\mathrm{dist}_{cc}
(\alpha,\beta)$ respectively.  The geodesic distance between $\alpha$
and $\beta$ in $\mathbb{S}^1$ is defined by
\begin{equation}\label{def:geodesic-distance}
  |\alpha-\beta|=\min\{\mathrm{dist}_{c} (\alpha,\beta),
  \mathrm{dist}_{cc} (\alpha,\beta)\}.
\end{equation}
For $z\in \mathbb{C}$, the real and imaginary part of $z$ are denoted
by $\Re(z)$ and $\Im(z)$, respectively. For $x\in \real^n$ and $p\in
[1,\infty)$, the $p$-norm of $x$ is
  $\|x\|_p=\sqrt[\leftroot{-2}\uproot{2} p]{|x_1|^p+\ldots+|x_n|^p}$
  and the $\infty$-norm of $x$ is
  $\|x\|_{\infty}=\max\setdef{|x_i|}{i\in \{1,\ldots,n\}}$.  For $A\in
  \real^{n\times m}$, the $p$-norm of $A$ is
  $\|A\|_p=\max\setdef{\|Ax\|_p}{\|x\|_p=1}$.  We let $A^{\top}$ denote
  the transpose of $A$.  The eigenvalues of a symmetric matrix $A\in\real^{n\times
    n}$ are real and denoted by $\lambda_1(A)\le \ldots\le
  \lambda_n(A)$. For symmetric matrices $A,B\in \real^{n\times n}$, we
  write $A\preceq B$ if $B-A$ is positive semidefinite. The
  Moore\textendash{}Penrose pseudoinverse of $A\in \real^{n\times m}$
  is the unique $A^{\dagger}\in \real^{m\times n}$ satisfying
  $AA^{\dagger}A=A$, $A^{\dagger}AA^{\dagger}=A^{\dagger}$,
  $\left(AA^{\dagger}\right)^{\top}=AA^{\dagger}$, and
  $\left(A^{\dagger}A\right)^{\top}=A^{\dagger}A$. Given a set
  $S\subseteq \real^m$ and matrix
  $A\in \real^{n\times m}$, we define the set $AS\subseteq \real^{n}$
  by $ AS =\{A\mathbf{v}\mid
  \mathbf{v}\in S\}$. Given subspaces $S$ and
  $T$ of $\real^n$, the \emph{minimal angle} between $S$ and $T$ is
  $\arccos(\max\setdef{x^{\top}y}{x\in S,\ y\in T,
    \|x\|_2=\|y\|_2=1})$.

If the vector spaces $S$ and $T$ satisfy $S\oplus T=\real^n$, then,
for every $x\in \real^n$, there exist unique $x_S\in S$ and $x_T\in T$
such that $x=x_S+x_T$; the vector $x_S$ is called the \emph{oblique
  projection of $x$ onto $S$ parallel to $T$} and the map
$\map{\prj}{\real^n}{S}$ defined by $\prj(x)=x_S$ is the \emph{oblique
  projection operator onto $S$ parallel to $T$}. If $T=S^{\perp}$,
then $\prj$ is the \emph{orthogonal projection onto $S$}.



An undirected weighted graph is a triple $G=(V,\mathcal{E},A)$, where
$V$ is the set of vertices with $|V|=n$, $\mathcal{E}\subseteq V\times
V$ is the set of edges with $|\mathcal{E}|=m$, and
$A=A^\top\in\real^{n\times n}$ is the nonnegative adjacency
matrix. The Laplacian $L\in\real^{n\times n}$ of $G$ is defined by
$L=\diag\left(\left\{\sum_{i=1}^{n} a_{ij}\right\}_{j\in
  V}\right)-A$. A path in $G$ is an ordered sequence of 
vertices such that there is an edge between every two consecutive
vertices. A path is \emph{simple} if no vertex appears more than once
in it, except possibly for the case when the initial and final
vertices are the same. A \emph{cycle} is a simple path that starts and
ends at the same vertex and has at least three vertices. If the graph $G$ is connected, then $\dim(\Img(L))=n-1$
\cite[Lemma 13.1.1]{CDG-GFR:01}. After choosing an enumeration and
orientation for the edges of $G$, we let $B\in \real^{n\times m}$
denote the oriented incidence matrix of $G$. For a connected $G$, we
have $\Img(B)=\vect{1}^{\perp}_n$. It is known that
$B\mathcal{A}B^{\top}=L$, where $\mathcal{A}\in \real^{m\times m}$ is
the diagonal weight matrix defined by $\mathcal{A}_{ef}=a_{ij}$ if
both edges $e$ and $f$ are equal to $(i,j)$, and $0$ otherwise. For every vector $\mathbf{v}\in \real^m$,
we define $L_{\mathbf{v}}\in \real^{n\times n}$ by $L_{\mathbf{v}}=B\mathcal{A}\diag(\mathbf{v})B^{\top}$.

\section{The Kuramoto model}\label{sec:kuramoto}

The Kuramoto model is a system of $n$ oscillators, where each
oscillator has a natural frequency $\omega_i\in \real$ and is
described by a phase angle $\theta_i\in \mathbb{S}^1$. The
interconnection of the oscillators is described by a weighted
undirected connected graph $G=(\until{n},\mathcal{E},A)$, with nodes
$\until{n}$, edges $\mathcal{E}\subseteq\until{n}\times\until{n}$, and
weights $a_{ij}=a_{ji}>0$.  The dynamics for the Kuramoto model is:
\begin{equation}\label{eq:2}
\dot{\theta}_i=\omega_i-\sum_{j=1}^{n}a_{ij}
\sin(\theta_i-\theta_j),\qquad\text{for } i\in\until{n}.
\end{equation}
In matrix language, using the incidence matrix $B$ associated to an
arbitrary orientation of the graph and the weight matrix $\mathcal{A}$, one
can write the differential equations~\eqref{eq:2} as:
\begin{equation}\label{eq:kuramoto_model}
\dot{\theta}=\omega-B\mathcal{A}\sin(B^{\top}\theta),
\end{equation}
where $\theta=(\theta_1,\theta_2,\ldots,\theta_n)^{\top}\in
\torus^n$ is the phase vector and
$\omega=(\omega_1,\omega_2,\ldots,\omega_n)^{\top}\in \real^n$ is the
natural frequency vector. For every $s\in [0,2\pi)$, the clockwise
  rotation of $\theta\in \torus^n$ by the angle $s$ is the
  function $\mathrm{rot}_s:\torus^n\to \torus^n$ defined by
\begin{equation*}
\mathrm{rot}_s(\theta)=(\theta_1+s,\theta_2+s,\ldots,\theta_n+s)^{\top},\qquad\text{for }
\theta\in \torus^n.
\end{equation*}
Given $\theta\in \torus^n$, define the equivalence class $[\theta]$ by
\begin{align*}
[\theta]=\left\{\mathrm{rot}_s(\theta)\mid s\in [0,2\pi)\right\}.
\end{align*}
The quotient space of $\torus^n$ under the above equivalence class
is denoted by $\torus^n/\mathrm{rot}$. If
$\map{\theta}{\mathbb{R}_{\ge0}}{\torus^n}$ is a solution for the
Kuramoto model~\eqref{eq:kuramoto_model} then, for every $s\in
[0,2\pi)$, the curve
  $\map{\mathrm{rot}_s(\theta)}{\real_{\ge0}}{\torus^n}$ is also a
  solution of~\eqref{eq:kuramoto_model}. Therefore, for the rest of
  this paper, we consider the state space of the Kuramoto
  model~\eqref{eq:kuramoto_model} to be
  $\torus^n/\mathrm{rot}$. 

\begin{definition}[\textbf{Frequency synchronization}]
\begin{enumerate}
\item A solution $\map{\theta}{\mathbb{R}_{\ge0}}{\torus^n}$ of
  the Kuramoto model~\eqref{eq:kuramoto_model} achieves
  \emph{frequency synchronization} if there exists a synchronous
  frequency function
  $\map{\subscr{\omega}{syn}}{\real_{\ge0}}{\real}$ such that
\begin{equation*}
\lim_{t\to\infty} \dot{\theta}(t)=\subscr{\omega}{syn}(t)\vect{1}_n.
\end{equation*}
\item For a subset $S$ of the torus $\torus^n$, the coupled oscillator
\eqref{eq:kuramoto_model} achieves \emph{frequency synchronization}
if, for every $\theta_0\in S$, the trajectory of
\eqref{eq:kuramoto_model} starting at $\theta_0$ achieves frequency
synchronization.
\end{enumerate}
\end{definition}
If a solution of the coupled oscillator~\eqref{eq:kuramoto_model} achieves
frequency synchronization, then by summing all the equations in
\eqref{eq:kuramoto_model} and taking the limit as $t\to\infty$, we obtain
      $\subscr{\omega}{syn}=\sum_{i=1}^{n} \omega_i/n$.
      Therefore, the synchronous frequency is constant and is equal to the
      average of the natural frequency of the oscillators. By choosing a
      rotating frame with the frequency $\frac{\subscr{\omega}{syn}}{n}$,
      one can assume that $\omega\in \vect{1}^{\perp}_n$.
\begin{definition}[\textbf{Synchronization manifold}]
Let $\theta^*$ be a solution of the algebraic equation
\begin{equation}\label{eq:synchronization_manifold}
\omega=B\mathcal{A}\sin(B^{\top}\theta^*).
\end{equation}
Then $[\theta^*]$ is a \emph{synchronization manifold} for the
Kuramoto model~\eqref{eq:kuramoto_model}.
\end{definition}
In other words, the synchronization manifolds of the Kuramoto
model~\eqref{eq:kuramoto_model} are the equilibrium manifolds of the
differential equations~\eqref{eq:kuramoto_model}.

\begin{theorem}[\textbf{Characterization of frequency synchronization}]\label{thm:9}
Consider the Kuramoto model~\eqref{eq:kuramoto_model}, with the graph
$G$, incidence matrix $B$, weight matrix $\mathcal{A}$, and natural
frequencies $\omega\in \vect{1}_n^{\perp}$. Then the following
statements are equivalent:
\begin{enumerate}
\item\label{p1:frequency} there exists an open set $U\subset\torus^n$
  such that every solution of the Kuramoto
  model~\eqref{eq:kuramoto_model} achieves frequency synchronization;
\item\label{p2:equilibrium} there exists a locally asymptotically
  stable synchronization manifold for~\eqref{eq:kuramoto_model}.
\end{enumerate} 
\end{theorem}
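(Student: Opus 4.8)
The plan is to establish the two implications separately, with the common engine being the fact that, in the reduced frame where $\omega\in\vect{1}_n^\perp$ and the state space is $\torus^n/\mathrm{rot}$, the Kuramoto vector field $f(\theta)=\omega-B\mathcal{A}\sin(B^\top\theta)$ is rotation-invariant and is the negative gradient $f=-\nabla U$ of the real-analytic potential
\begin{equation*}
  U(\theta)=-\omega^\top\theta-\sum_{(i,j)\in\mathcal{E}}a_{ij}\cos(\theta_i-\theta_j).
\end{equation*}
Since $B^\top\vect{1}_n=\vect{0}_n$, the field $f$ (hence $\nabla U$) descends to the compact space $\torus^n/\mathrm{rot}$, a synchronization manifold is exactly a rotation-orbit of a critical point of $U$, and in this frame a trajectory frequency synchronizes if and only if $\dot\theta(t)=f(\theta(t))\to\vect{0}_n$ (note $f(\theta)\in\vect{1}_n^\perp$ always).

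For (ii)$\Rightarrow$(i) I would argue directly: let $[\theta^*]$ be a locally asymptotically stable synchronization manifold, pick a nonempty open, rotation-invariant set $U\subset\torus^n$ contained in its region of attraction, and note that for $\theta_0\in U$ one has $\mathrm{dist}(\theta(t),[\theta^*])\to0$; since $f$ is continuous on the compact torus and vanishes identically on the equilibrium manifold $[\theta^*]$, this forces $\dot\theta(t)=f(\theta(t))\to\vect{0}_n$, i.e.\ frequency synchronization on $U$.

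For (i)$\Rightarrow$(ii), the substantial direction, I would fix $\theta_0$ in the given open set and study its trajectory in the compact space $\torus^n/\mathrm{rot}$. Its $\omega$-limit set $\Omega$ is nonempty, compact, connected, invariant; and since $\dot\theta(t)\to\vect{0}_n$, continuity of $f$ at any $p\in\Omega$ (take $t_k\to\infty$ with $\theta(t_k)\to p$) gives $\nabla U(p)=-f(p)=\vect{0}_n$. Hence $\Omega$ consists of equilibria and the trajectory approaches the critical set of $U$; working with a local branch of the analytic $U$ near $\Omega$ and invoking the \L{}ojasiewicz gradient inequality on a finite cover of $\Omega$, the trajectory has finite length and converges to a single equilibrium $\theta^*$. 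In particular synchronization manifolds exist, and the remaining point is to produce a \emph{stable} one. For this I would use the standard fact that, for the gradient flow of a real-analytic potential, the set of initial conditions converging to an equilibrium that is not a local minimizer of the potential has empty interior (indeed Lebesgue measure zero); since the given set of synchronizing initial conditions is open, it therefore contains a point whose limit $\theta^*$ is a local minimizer of $U$. Then $U-U(\theta^*)$ is nonnegative near $\theta^*$ with $\tfrac{d}{dt}U=-\|\nabla U\|_2^2\le0$, so Lyapunov stability of the connected component $C$ of the critical set through $\theta^*$ follows from $U-U(\theta^*)\ge0$, and LaSalle's invariance principle gives local attractivity of $C$. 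Finally, since $\nabla^2 U(\theta^*)=L_{\cos(B^\top\theta^*)}=B\mathcal{A}\diag(\cos(B^\top\theta^*))B^\top\succeq0$ always has $\vect{1}_n$ in its kernel, when this kernel equals $\spn\{\vect{1}_n\}$ (the rotation direction) the component $C$ reduces to the single orbit $[\theta^*]$, yielding the desired locally asymptotically stable synchronization manifold.

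The main obstacle is precisely this last stability step of (i)$\Rightarrow$(ii): the $\omega$-limit argument is cheap and only yields \emph{some} equilibrium, whereas upgrading ``an open set of initial conditions synchronizes'' to ``a locally asymptotically stable synchronization manifold exists'' genuinely exploits the gradient structure together with the thin-basin property of non-minimizing equilibria, and requires some care to handle possibly non-isolated (degenerate) equilibrium sets.
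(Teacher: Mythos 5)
First, a point of reference: the paper states Theorem~\ref{thm:9} \emph{without proof} and without an explicit citation (it is quoted as a known fact from the D\"orfler--Bullo line of work), so there is no in-paper argument to compare yours against. Your easy direction (ii)$\Rightarrow$(i) is correct, and your setup is the right one: $f=-\nabla U$ with $U(\theta)=-\omega^{\top}\theta-\sum_{(i,j)\in\mathcal{E}}a_{ij}\cos(\theta_i-\theta_j)$, frequency synchronization reduces to $\dot\theta(t)\to\vect{0}_n$ because $\dot\theta\in\vect{1}_n^{\perp}$ for all time, and $\nabla^2U(\theta)=L_{\cos(B^{\top}\theta)}$.

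The direction (i)$\Rightarrow$(ii) has a genuine gap, and it sits exactly where you place your ``main obstacle.'' The fact you invoke --- that for the gradient flow of a real-analytic function the set of initial conditions converging to a critical point that is not a local minimizer has empty interior (let alone measure zero) --- is false. Take $U(x,y)=y^2-x^3$ on $\real^2$: the origin is the unique critical point and is not a local minimum (since $U(x,0)=-x^3$ changes sign), yet the flow $\dot x=3x^2$, $\dot y=-2y$ sends the entire open half-plane $\{x<0\}$ to the origin; the same phenomenon occurs on a compact manifold (e.g.\ $U(\theta)=-\sin^3\theta$ on $\mathbb{S}^1$, where the degenerate critical point at $\theta=0$ attracts an open arc while being unstable from the other side). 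What \emph{is} true, via the center-stable manifold theorem plus a covering argument over the possibly-continuum critical set, is only that the basins of equilibria at which $\nabla^2U$ has a \emph{negative eigenvalue} are thin. That weaker statement delivers an equilibrium $\theta^*$ with $L_{\cos(B^{\top}\theta^*)}\succeq 0$, i.e.\ precisely the degenerate situation $\dim\Ker\big(L_{\cos(B^{\top}\theta^*)}\big)>1$ that your final sentence leaves open; in that case local asymptotic stability of the single orbit $[\theta^*]$ does not follow (your LaSalle argument only yields attractivity of a whole connected component of the critical set), and closing that case is the entire content of the theorem beyond the hyperbolic one.

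Two further steps need more care than you give them. First, $U$ is not single-valued on $\torus^n$ (the term $-\omega^{\top}\theta$ shifts by $-2\pi\omega_k$ under $\theta_k\mapsto\theta_k+2\pi$), so the \L{}ojasiewicz convergence argument must be run on the universal cover; but $\dot\theta(t)\to\vect{0}_n$ does not by itself make the lifted trajectory bounded or $U$ bounded below along it, so ``the trajectory has finite length and converges to a single equilibrium'' requires an additional argument whenever the $\omega$-limit set is not contained in a neighborhood admitting a single-valued branch of $U$. Second, even granting convergence, the passage from ``some synchronizing initial condition limits to a local minimizer'' to ``a locally asymptotically stable synchronization manifold exists'' is exactly the degenerate-kernel issue above. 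So the proposal is a sound outline of the easy implication and of the hyperbolic case, but the key lemma it rests on is false as stated, and the degenerate case is not closed.
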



\section{Preliminary results}\label{sec:perliminary}


\subsection{The cutset projection associated to a weighted digraph}\label{subsec:oblique_projection}
We here introduce and study a useful oblique projection operator; to the
best of our knowledge, this operator and its graph theoretic interpretation
have not been studied previously. We start with some definitions for a
digraph $G$ with $n$ nodes and $m$ edges.  For a simple path $\gamma$ in $G$,
we define the \emph{signed weighted path vector} $v^\gamma\in\real^m$ of
the simple path $\gamma$ by
\begin{equation*}
  v_e^\gamma =
  \begin{cases}
    {+\displaystyle{1}/{a_{kl}}},\quad & \mbox{if the edge $e=(k,l)$ is traversed} \\[-.5ex]
    & \qquad\qquad\qquad\qquad\quad \mbox{positively by $\gamma$,} 
    \\
    {\displaystyle-{1}/{a_{kl}}}, \quad &  \mbox{if the edge $e=(k,l)$ is traversed  } \\[-.5ex]
    & \qquad\qquad\qquad\qquad\quad\mbox{negatively by $\gamma$,} \\
  0, & \mbox{otherwise}.
\end{cases}
\end{equation*} 
For a partition of the
vertices of $V$ in two non-empty disjoint sets $\phi$ and
$\phi^{\mathsf{c}}$, the \emph{cutset orientation vector}
corresponding to the partition $V=\phi\bigcup\phi^{\mathsf{c}}$ is the
vector $v^\phi\in\real^m$ given by
\begin{equation*}
  v_e^\phi=
  \begin{cases}
    +1, \quad & \mbox{if edge $e$ has its source node in $\phi$ } \\[-.5ex]
    &  \qquad\qquad\mbox{and its sink node in $\phi^{\mathsf{c}}$},
    \\
    -1, &\mbox{if edge $e$ has its source node in $\phi^{\mathsf{c}}$ } \\[-.5ex]
    & \qquad\qquad \mbox{and  its sink node in $\phi$},\\
    0, & \mbox{otherwise}.
  \end{cases}
\end{equation*}
The \emph{weighted cycle space} of $G$ is the subspace of $\real^m$ spanned
by the signed weighted path vectors of all simple undirected cycles in $G$.
(Note that the notion of cycle space is standard, while that
  of weighted cycle space is not.)  The \emph{cutset space} of $G$ is
subspace of $\real^m$ spanned by the cutset orientation vectors of all cuts
of the nodes of $G$. It is a variation of a known fact, e.g.,
  see~\cite[Theorem~8.5]{FB:18}, that
\begin{align*}
  & \text{weighted cycle space} \\
  &\enspace=
  \spn\setdef{v^\gamma\in\real^m}{\gamma \text{ is a simple cycle in
      $G$}} = \Ker(B\mathcal{A}),
  \\ &\text{cutset space} \\
  &\enspace=
  \spn\setdef{v^\phi\in\real^m}{\phi \text{ is a cut of $G$}} =
  \Img(B^{\top}).
\end{align*}

\begin{theorem}[\textbf{Decomposition of edge space and the cutset projection}]
  \label{thm:decomposition}
  Let $G$ be an undirected weighted connected graph with $n$ nodes and $m$
  edges, incidence matrix $B$, and weight matrix
  $\mathcal{A}$. Recall that the Laplacian of $G$ is given by
  $L=B\mathcal{A} B^\top$. Then
  \begin{enumerate}
  \item\label{p1:edge_decomposition} the edge space $\real^m$ can be
    decomposed as the direct sum:
    \begin{equation*}
      \real^m = \Img(B^{\top})\oplus\Ker(B\mathcal{A});
    \end{equation*}
    
  \item\label{p2:oblique_projection} the \emph{cutset projection matrix} $\prj$,
    defined to be the oblique projection onto $\Img(B^{\top})$ parallel to
    $\Ker(B\mathcal{A})$, is given by
    \begin{align}
      \label{def:prj}
      \prj=B^{\top}L^{\dagger}B\mathcal{A};
    \end{align}
    
  \item\label{p1:idempotent} the cutset projection matrix $\prj$ is idempotent,
    and $0$ and $1$ are eigenvalues with algebraic (and geometric) multiplicity
    $m-n+1$ and $n-1$, respectively.
\end{enumerate}
\end{theorem}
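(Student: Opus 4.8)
The plan is to establish the three claims in sequence; the direct-sum decomposition in part \ref{p1:edge_decomposition} carries most of the weight, and parts \ref{p2:oblique_projection} and \ref{p1:idempotent} then follow by short verifications.

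For part \ref{p1:edge_decomposition}, I would first record a dimension count. Since $G$ is connected, $\operatorname{rank}(B)=n-1$; since the weights are strictly positive, $\mathcal{A}$ is a positive-definite diagonal matrix, so $\operatorname{rank}(B\mathcal{A})=\operatorname{rank}(B)=n-1$ as well. Hence $\dim\Img(B^{\top})=n-1$ and $\dim\Ker(B\mathcal{A})=m-n+1$, and the two dimensions sum to $m$. It then remains to show that the intersection is trivial: if $x=B^{\top}y\in\Img(B^{\top})$ also lies in $\Ker(B\mathcal{A})$, then $Ly=B\mathcal{A}B^{\top}y=0$, so $0=y^{\top}Ly=(B^{\top}y)^{\top}\mathcal{A}(B^{\top}y)$, and positive-definiteness of $\mathcal{A}$ forces $x=B^{\top}y=0$. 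Together with the dimension count, this yields $\real^m=\Img(B^{\top})\oplus\Ker(B\mathcal{A})$.

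For part \ref{p2:oblique_projection}, I would verify that $\prj=B^{\top}L^{\dagger}B\mathcal{A}$ satisfies the two defining properties of the oblique projection onto $S=\Img(B^{\top})$ parallel to $T=\Ker(B\mathcal{A})$, namely $\prj x\in S$ and $x-\prj x\in T$ for every $x\in\real^m$; by the uniqueness of the decomposition from part \ref{p1:edge_decomposition}, these two properties characterize that projection. The first is immediate from the leading factor $B^{\top}$. For the second, note that $B\mathcal{A}(x-\prj x)=(I-LL^{\dagger})B\mathcal{A}x$, and the key point is that $\Img(L)=\Img(B)$: indeed $\Ker(L)=\Ker(B^{\top})$ by the same positive-definiteness argument, so $\operatorname{rank}(L)=\operatorname{rank}(B)$, and since $\Img(L)\subseteq\Img(B)$ the two subspaces coincide. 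As $B\mathcal{A}x\in\Img(B)=\Img(L)$ and $LL^{\dagger}$ is the orthogonal projection onto $\Img(L)$, we get $LL^{\dagger}B\mathcal{A}x=B\mathcal{A}x$, hence $x-\prj x\in\Ker(B\mathcal{A})$, as needed.

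For part \ref{p1:idempotent}, idempotency holds for any oblique projection, and can also be checked directly from the Moore\textendash{}Penrose identity $L^{\dagger}LL^{\dagger}=L^{\dagger}$: $\prj^{2}=B^{\top}L^{\dagger}(B\mathcal{A}B^{\top})L^{\dagger}B\mathcal{A}=B^{\top}L^{\dagger}LL^{\dagger}B\mathcal{A}=\prj$. An idempotent matrix is diagonalizable with spectrum contained in $\{0,1\}$; the eigenspace for $1$ is $\Img(\prj)=\Img(B^{\top})$, of dimension $n-1$, and the eigenspace for $0$ is $\Ker(\prj)=\Ker(B\mathcal{A})$, of dimension $m-n+1$, which yields the asserted algebraic (and hence, by diagonalizability, geometric) multiplicities. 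I do not anticipate a genuine obstacle; the only point demanding care is the repeated use of the positive-definiteness of $\mathcal{A}$ to pass between the kernels and images of $B$, $B\mathcal{A}$, and $L=B\mathcal{A}B^{\top}$, together with the correct invocation of the pseudoinverse identities, in particular that $LL^{\dagger}$ acts as the identity on $\Img(L)$.
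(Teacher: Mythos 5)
Your proposal is correct, and its overall architecture matches the paper's: the same triviality-of-intersection plus dimension-count argument for part \ref{p1:edge_decomposition} (you conclude $B^{\top}y=\vectorzeros[m]$ from $Ly=\vectorzeros[n]$ via the quadratic form $y^{\top}Ly=(B^{\top}y)^{\top}\mathcal{A}(B^{\top}y)$, the paper via $\Ker(L)=\spn\{\vect{1}_n\}$ — both fine), and the same idempotency computation for part \ref{p1:idempotent}. The one genuine divergence is part \ref{p2:oblique_projection}: the paper obtains the formula $\prj=B^{\top}L^{\dagger}B\mathcal{A}$ by invoking a general lemma on oblique projections (the formula $X(Y^{\top}X)^{\dagger}Y^{\top}$ with $X=B^{\top}$, $Y=\mathcal{A}B^{\top}$, cited from the literature), whereas you verify directly that $\prj x\in\Img(B^{\top})$ and $x-\prj x\in\Ker(B\mathcal{A})$, using $\Img(L)=\Img(B)$ and the fact that $LL^{\dagger}$ is the orthogonal projection onto $\Img(L)$. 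Your route is more elementary and self-contained (no external lemma needed), at the cost of not exhibiting the general pattern behind the formula; the paper's route makes clear that \eqref{def:prj} is an instance of a standard generalized projection formula, which is the point it emphasizes after the theorem statement. Both arguments are complete; your identification of $\Img(L)=\Img(B)$ via $\Ker(L)=\Ker(B^{\top})$ and the rank count is exactly the care the step requires.
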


The proof of this theorem is given in
Appendix~\ref{app:decomposition}.  Recall that, given a full rank
matrix $C$, the orthogonal projection onto $\Img(C)$ is given by the
formula $C(C^{\top}C)^{-1}C^{\top}$.  As we show in
Appendix~\ref{app:decomposition}, the equality~\eqref{def:prj} is an
application of a generalized version of this formula.  Next, we
establish some properties of the cutset projection matrix, whose proof
is again postponed to Appendix~\ref{app:decomposition}.

\begin{theorem}[\textbf{Properties of the cutset projection}]\label{thm:projection_property}
    Consider an undirected weighted connected graph $G$ with incidence matrix
    $B$, weight matrix $\mathcal{A}$, and cutset projection matrix
    $\prj$. Then the following statements hold:
  \begin{enumerate} 
  \item\label{p2:orthogonal} if $G$ is unweighted (that is, $\mathcal{A}=I_m$),
    then $\prj$ is an orthogonal projection matrix and $\|\prj\|_2=1$;
    
  \item\label{p3:acyclic} if $G$ is acyclic, then $\prj=I_{m}$ and $\|\prj\|_{\infty}=1$;

  \item\label{p4:complete} if $G$ is an unweighted complete graph, then
    $\prj=\tfrac{1}{n}B^\top B$ and $\|\prj\|_{\infty}=\frac{2(n-1)}{n}$; and

  \item\label{p5:ring} if $G$ is an unweighted ring graph, then
    $\prj=I_n-\frac{1}{n}\vect{1}_n\vect{1}_n^{\top}$ and
    $\|\prj\|_{\infty}=\frac{2(n-1)}{n}$.
  \end{enumerate}
\end{theorem}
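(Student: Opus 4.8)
The plan is to establish the four claims largely independently, in each case first identifying the matrix $\prj$ explicitly and then reading off the relevant norm.

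For part~(i), note that when $\mathcal{A}=I_m$ the formula $\prj=B^{\top}L^{\dagger}B$ of Theorem~\ref{thm:decomposition}, together with the symmetry of $L$ and hence of $L^{\dagger}$, gives $\prj^{\top}=B^{\top}(L^{\dagger})^{\top}B=\prj$. Since $\prj$ is also idempotent by Theorem~\ref{thm:decomposition}, it is an orthogonal projection matrix; it is nonzero because $G$ has at least one edge, so its eigenvalues lie in $\{0,1\}$ and $\|\prj\|_2=1$. For part~(ii), a connected acyclic graph is a tree with $m=n-1$, so the cycle space $\Ker(B\mathcal{A})$ has dimension $m-n+1=0$; the decomposition $\real^m=\Img(B^{\top})\oplus\Ker(B\mathcal{A})$ then forces $\Img(B^{\top})=\real^m$, so $\prj$ is the oblique projection onto all of $\real^m$ parallel to $\{0\}$, i.e.\ $\prj=I_m$, whence $\|\prj\|_{\infty}=1$.

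For part~(iii), the first step is to record that for the unweighted complete graph $L=nI_n-\vect{1}_n\vect{1}_n^{\top}$, and to verify against the four Moore--Penrose axioms (using $\vect{1}_n\in\Ker(L)$) that $L^{\dagger}=\tfrac{1}{n}I_n-\tfrac{1}{n^2}\vect{1}_n\vect{1}_n^{\top}$. Substituting this into $\prj=B^{\top}L^{\dagger}B$ and using $\vect{1}_n^{\top}B=\vect{0}_m^{\top}$ (every column of an incidence matrix sums to zero) eliminates the rank-one term and leaves $\prj=\tfrac{1}{n}B^{\top}B$. It then remains to compute the induced $\infty$-norm of $B^{\top}B$, which equals its maximum absolute row sum: the diagonal entries all equal $2$, an off-diagonal entry $(B^{\top}B)_{ef}$ equals $\pm1$ precisely when edges $e$ and $f$ share an endpoint and is $0$ otherwise, and in $K_n$ a fixed edge shares an endpoint with exactly $2(n-2)$ other edges, so every absolute row sum equals $2+2(n-2)=2(n-1)$. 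Hence $\|\prj\|_{\infty}=\tfrac{2(n-1)}{n}$. For part~(iv) the unweighted ring has $\mathcal{A}=I_m$ with $m=n$, and the cycle space is the one-dimensional $\Ker(B)=\spn\{\vect{1}_n\}$ (the signed path vector of the unique cycle, under the standard consistent orientation); thus $\Ker(B\mathcal{A})$ is the orthogonal complement of $\Img(B^{\top})=\Ker(B)^{\perp}=\vect{1}_n^{\perp}$, and by part~(i) $\prj$ is the orthogonal projection onto $\vect{1}_n^{\perp}$, namely $\prj=I_n-\tfrac{1}{n}\vect{1}_n\vect{1}_n^{\top}$. Its absolute row sums are $\tfrac{n-1}{n}+(n-1)\cdot\tfrac{1}{n}=\tfrac{2(n-1)}{n}$, so $\|\prj\|_{\infty}=\tfrac{2(n-1)}{n}$.

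The structural inputs — symmetry and idempotency of $\prj$, the dimension count for the cycle space, and the direct-sum decomposition — are all already available, so the only genuine computations are the explicit pseudoinverse $L^{\dagger}$ for $K_n$ and the edge-adjacency bookkeeping in part~(iii). I expect that bookkeeping — confirming that $2(n-1)$ is both an upper bound on and attained by the absolute row sums, so that the induced $\infty$-norm is exactly that value — to be the main, though still routine, obstacle.
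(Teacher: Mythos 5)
Your proof is correct and follows essentially the same route as the paper: the explicit pseudoinverse of $L$ for $K_n$, the identity $\vect{1}_n^{\top}B=\vect{0}_m^{\top}$, the edge-adjacency row-sum count $2+2(n-2)=2(n-1)$, and the reduction of the ring case to the orthogonal projection onto $\vect{1}_n^{\perp}$ all match the paper's argument. The only (harmless) variations are in parts (i) and (ii), where the paper instead verifies directly that $\Img(B^{\top})=(\Ker(B))^{\perp}$ for the orthogonality claim and checks $\prj x=x$ pointwise for trees, whereas you invoke symmetry-plus-idempotency and the zero-dimensionality of the cycle space; both are standard and equally valid.
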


We conclude with some observations without proof.

\begin{remark}[Connection with effective resistances and minimal
  angle] With the same notation as in
  Theorem~\ref{thm:projection_property},
\begin{enumerate}
  \item the decomposition $\real^m=\Img(B^{\top})\oplus \Ker(B\mathcal{A})$
    and the cutset projection matrix $\prj$ depend on the 
      edge orientation chosen on $G$. However, it can be shown that, for
    every $p\in [1,\infty)\union \{\infty\}$, the induced norm
      $\|\prj\|_{p}$ is independent of the specific orientation;
      
  \item if $\R\in \real^{n\times n}$ is the matrix of effective
    resistances of the weighted graph $G$, then
    $\prj=-\frac{1}{2}B^{\top}\R B \mathcal{A}$~\cite{AG-SB-AS:08};
    and

  \item if $\theta$ is the minimal angle between the cutset space of
    $G$ and the weighted cycle space of $G$, then
    $\sin(\theta)=\|\prj\|^{-1}_{2}$~\cite[Theorem 3.1]{ICFI-CDM:95}.
\end{enumerate}
\end{remark}

\subsection{Embedded cohesive subset}\label{subsec:embedded_cohesive}

In this subsection, we introduce a new subset of the $n$-torus, called the
embedded cohesive subset. This subset plays an essential role in our
analysis of the Kuramoto model~\eqref{eq:kuramoto_model}. In
  what follows, recall that $\left|\theta_i-\theta_j\right|$ is the
  geodesic distance on $\torus$ between angles $\theta_i$ and $\theta_j$,
  as defined in equation~\eqref{def:geodesic-distance}.

\begin{definition}[\textbf{Arc subset, cohesive subset, and embedded cohesive subsets}]
Let $G$ be an undirected weighted connected graph with edge set
$\mathcal{E}$ and let $\gamma\in [0,\pi)$.
\begin{enumerate}
\item The \emph{arc subset} $\overline{\Gamma}(\gamma)\subset
  \torus^n$ is the set of $\theta\in\torus^n$ such that there
  exists an arc of length $\gamma$ in $\mathbb{S}^1$ containing all
  angles $\theta_1,\theta_2,\ldots,\theta_n$. The set $\Gamma(\gamma)$
  is the interior of $\overline{\Gamma}(\gamma)$;

\item The \emph{cohesive subset} $\Delta^G(\gamma)\subseteq
  \torus^n$ is 
  \begin{align*}
    \Delta^G(\gamma)=\setdef{\theta\in \torus^n}{
      \left|\theta_i-\theta_j\right|\le \gamma, \quad \text{for all } (i,j)\in \mathcal{E}};
  \end{align*}
  
\item The \emph{embedded cohesive subset} $S^{G}(\gamma)\subseteq
  \torus^n$ is 
  \begin{equation*}
    S^{G}(\gamma)=\setdef{\mathrm{rot}_s(\exp(\imagunit\mathbf{x}))}{\mathbf{x}\in\bperp \text{ and }s\in [0,2\pi)},
  \end{equation*}
  where we define $\bperp=\{\mathbf{x}\in
  \vect{1}_n^{\perp}\mid\|B^{\top}\mathbf{x}\|_{\infty}\le \gamma\}$.
\end{enumerate}
\end{definition}

It is easy to see that the arc subsets, the cohesive subsets, and the
embedded cohesive subset are invariant under the rotations
$\mathrm{rot}_s$, for every $s\in [0,2\pi)$. Therefore, in the rest of
  this paper, without any ambiguity, we use the notations
  $\Gamma(\gamma)$, $\Delta^G(\gamma)$, and $S^{G}(\gamma)$ for the
  set of equivalent classes of the arc subsets, the cohesive subset,
  and the embedded cohesive subset, respectively.

Note that it is clear how to check whether a point in $\torus^n$
belongs to the arc subset and/or the cohesive subset.  We next present
an algorithm, called the Embedding Algorithm, that allows one to
easily check whether a point in $\torus^n$ belongs to the embedded
cohesive set or not.

\smallskip

\begin{algorithm}
  \caption{\textbf{Embedding Algorithm}}
  \label{algo:embedding}

\begin{algorithmic}[1]
  \REQUIRE{$\theta\in\torus^n$}
  \STATE $x_1:=0$ and $S := \{1\}$
  \WHILE{$|S|<n$}
  \STATE pick $\{j,k\}\in\mathcal{E}$ s.t.\ $j\in{S}$ and  $k\in\until{n}\setminus{S}$  
  \IF{geodesic arc from $\theta_j$ to  $\theta_k$ is counterclockwise}
  \STATE $x_k:=x_j+|\theta_j-\theta_k|$
  \ELSE
  \STATE $x_k:=x_j-|\theta_j-\theta_k|$
  \ENDIF
  \STATE  $S := S \union \{k\}$
  \ENDWHILE
  \STATE  $\mathbf{x}^{\theta}_i := x_i - \operatorname{average}(x_1,\dots,x_n)$ for $i\in\until{n}$
  \RETURN $\mathbf{x}^{\theta}\in\vect{1}_n^\perp$ 
\end{algorithmic}
\end{algorithm}
\smallskip

  We now characterize the embedded cohesive set; the proof of the
  following theorem is given in Appendix~\ref{app:phase_cohesive}.

\begin{theorem}[\textbf{Characterization of the embedded cohesive subset}]\label{thm:embedded_cohesive_characterization}
  Let $G$ be an undirected weighted connected graph and $\gamma\in[0,\pi)$.
    For $\theta\in \torus^n$, let
      $\mathbf{x}^{\theta}\in\vect{1}_n^\perp$ be the corresponding output
      of the Embedding Algorithm. Then the following statements holds:
    \begin{enumerate}
    \item\label{p1:inclusion-chain} $\overline{\Gamma}(\gamma)\subseteq
      S^G(\gamma)\subseteq \Delta^{G}(\gamma)$;
    \item\label{p2:exp} $\exp(\imagunit\mathbf{x}^{\theta})\in [\theta]$;
    \item\label{p3:embeded_cohesive} $[\theta]\in S^{G}(\gamma)$ if and
      only if $\mathbf{x}^{\theta}\in \bperp$; and
    \item\label{p4:diffeomorphic_S} the set $S^{G}(\gamma)$ is
      diffeomorphic with $\bperp$ and the set $S^{G}(\gamma)$ is compact.
\end{enumerate}
\end{theorem}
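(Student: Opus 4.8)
The strategy is to distill from the Embedding Algorithm a single rigidity fact and to derive all four statements from it. Fix the spanning tree $T$ of $G$ generated by the edge selections in Line~3 (each iteration adjoins exactly one new vertex, so the chosen edges form a spanning tree), and write $\mathbf{x}'=(x_1,\dots,x_n)$ for the vector produced just before the centering step, so that $\mathbf{x}^{\theta}=\mathbf{x}'-\operatorname{average}(x_1,\dots,x_n)\vect{1}_n$. For every edge $(j,k)$ of $T$ the update rule sets $x_k'-x_j'=\pm|\theta_j-\theta_k|$ with the sign chosen exactly so that the signed geodesic arc of length $|\theta_j-\theta_k|$ issued from $\theta_j$ ends at $\theta_k$; equivalently $\exp(\imagunit(x_k'-x_j'))=\exp(\imagunit(\theta_k-\theta_j))$. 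Propagating this identity along the tree paths rooted at vertex $1$ and using $x_1'=0$ gives $\exp(\imagunit x_i')=\exp(\imagunit(\theta_i-\theta_1))$ for all $i$, i.e.\ $\exp(\imagunit\mathbf{x}')=\mathrm{rot}_{-\theta_1}(\exp(\imagunit\theta))$; centering only rotates this further, which proves Statement~\ref{p2:exp}. Note also that $|x_k^{\theta}-x_j^{\theta}|=|x_k'-x_j'|=|\theta_j-\theta_k|\le\pi$ for every tree edge $(j,k)$.

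The rigidity fact is: \emph{if $\gamma\in[0,\pi)$ and $\mathbf{y}\in\vect{1}_n^{\perp}$ satisfies $\exp(\imagunit\mathbf{y})\in[\theta]$ and $\|B^{\top}\mathbf{y}\|_{\infty}\le\gamma$, then $\mathbf{y}=\mathbf{x}^{\theta}$.} Indeed, $\exp(\imagunit\mathbf{y})$ and $\exp(\imagunit\mathbf{x}^{\theta})$ lie in the same $\mathrm{rot}$-orbit, so $\mathbf{y}=\mathbf{x}^{\theta}+s\vect{1}_n+2\pi\mathbf{z}$ for some $s\in\real$ and $\mathbf{z}\in\Z^n$; averaging and using $\mathbf{y},\mathbf{x}^{\theta}\in\vect{1}_n^{\perp}$ forces $s=-2\pi\operatorname{average}(\mathbf{z})$, whence $\mathbf{y}-\mathbf{x}^{\theta}=2\pi(\mathbf{z}-\operatorname{average}(\mathbf{z})\vect{1}_n)$. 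Taking the difference of the $k$- and $j$-components for a tree edge $(j,k)$—which, being an edge of $G$, contributes the coordinate $\pm(y_j-y_k)$ of $B^{\top}\mathbf{y}$, so $|y_k-y_j|\le\gamma$—and using also $|x_k^{\theta}-x_j^{\theta}|\le\pi$ yields $2\pi|z_k-z_j|\le\gamma+\pi<2\pi$, hence $z_k=z_j$; since $T$ is spanning, $\mathbf{z}$ is constant and $\mathbf{y}=\mathbf{x}^{\theta}$. Statement~\ref{p3:embeded_cohesive} follows at once: if $\mathbf{x}^{\theta}\in\bperp$ then $[\theta]=[\exp(\imagunit\mathbf{x}^{\theta})]\in S^{G}(\gamma)$ by Statement~\ref{p2:exp} and the definition of $S^{G}(\gamma)$; conversely, if $[\theta]\in S^{G}(\gamma)$ then $[\theta]=[\exp(\imagunit\mathbf{y})]$ for some $\mathbf{y}\in\bperp$, and rigidity gives $\mathbf{x}^{\theta}=\mathbf{y}\in\bperp$.

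For Statement~\ref{p1:inclusion-chain}: if $[\theta]\in S^{G}(\gamma)$ then $\theta=\mathrm{rot}_s(\exp(\imagunit\mathbf{x}))$ with $\|B^{\top}\mathbf{x}\|_{\infty}\le\gamma$, so for every $(i,j)\in\mathcal{E}$ one has $|x_i-x_j|\le\gamma<\pi$ and therefore the geodesic distance $|\theta_i-\theta_j|$ equals $|x_i-x_j|\le\gamma$; thus $\theta\in\Delta^{G}(\gamma)$. For the first inclusion, if $\theta\in\overline{\Gamma}(\gamma)$ choose a lift with $\mathrm{rot}_{-a}(\theta)=\exp(\imagunit\mathbf{y})$ and all $y_i\in[0,\gamma]$; then $\mathbf{x}:=\mathbf{y}-\operatorname{average}(y_1,\dots,y_n)\vect{1}_n\in\vect{1}_n^{\perp}$ satisfies $|x_i-x_j|=|y_i-y_j|\le\gamma$ for all $i,j$, so $\|B^{\top}\mathbf{x}\|_{\infty}\le\gamma$ and $[\theta]=[\exp(\imagunit\mathbf{x})]\in S^{G}(\gamma)$.

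Finally, Statement~\ref{p4:diffeomorphic_S}. Since $G$ is connected, $\Ker(B^{\top})=\spn\{\vect{1}_n\}$, so $\mathbf{x}\mapsto\|B^{\top}\mathbf{x}\|_{\infty}$ is a norm on $\vect{1}_n^{\perp}$ and $\bperp$ is a closed bounded, hence compact, (convex) subset of $\vect{1}_n^{\perp}$. The map $\Phi\colon\bperp\to S^{G}(\gamma)$, $\Phi(\mathbf{x})=[\exp(\imagunit\mathbf{x})]$, is onto by the definition of $S^{G}(\gamma)$ and one-to-one by the rigidity fact (any two preimages of a given class both equal $\mathbf{x}^{\theta}$); being a continuous bijection from a compact space to a Hausdorff space it is a homeomorphism, so $S^{G}(\gamma)=\Phi(\bperp)$ is compact. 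Moreover $\Phi$ is the restriction of the composite $\vect{1}_n^{\perp}\hookrightarrow\real^n\xrightarrow{\exp}\torus^n\to\torus^n/\mathrm{rot}$, which is a local diffeomorphism because $\exp$ is one and the projection $\real^n\to\real^n/\spn\{\vect{1}_n\}$ restricts to an isomorphism on $\vect{1}_n^{\perp}$; a homeomorphism that is locally a diffeomorphism is a diffeomorphism. The crux of the whole argument is the rigidity fact: it is what renders the algorithm's arbitrary choice of spanning tree immaterial and simultaneously delivers Statement~\ref{p3:embeded_cohesive} and the injectivity in Statement~\ref{p4:diffeomorphic_S}; the one delicate point there is to track the $2\pi\Z^n$ lifting ambiguity and exploit the strict inequality $\gamma<\pi$ (with the tree-edge bound $|\theta_j-\theta_k|\le\pi$) to eliminate the integer vector $\mathbf{z}$.
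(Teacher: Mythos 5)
Your proof is correct, and it reorganizes the argument around one unified lemma rather than the paper's structure. The paper first proves a path lemma (for any two nodes of a $\theta\in\Delta^G(\gamma)$ there is a simple path along which consecutive $x$-increments are at most $\gamma$) and then runs two separate contradiction arguments — one for the converse direction of statement (iii), propagating the integer offset vector along such a path between the endpoints of a violating edge, and another for the injectivity of $\mathbf{x}\mapsto[\exp(\imagunit\mathbf{x})]$ in statement (iv), using an edge where the offsets differ to get $|x_i-x_j|\ge 2\pi-\gamma>\pi$. Your single rigidity fact (any $\mathbf{y}\in\vect{1}_n^{\perp}$ with $\exp(\imagunit\mathbf{y})\in[\theta]$ and $\|B^{\top}\mathbf{y}\|_{\infty}\le\gamma$ equals $\mathbf{x}^{\theta}$) subsumes both: you propagate the integer vector along the spanning tree that the algorithm itself constructs, using the asymmetric bounds $|y_k-y_j|\le\gamma$ and $|x^{\theta}_k-x^{\theta}_j|\le\pi$ on tree edges so that $2\pi|z_k-z_j|\le\gamma+\pi<2\pi$. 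The asymmetry is the right choice, since in the converse of (iii) one cannot assume $\mathbf{x}^{\theta}\in\bperp$; the paper handles this by invoking its path lemma instead. Your compactness argument is also slicker: observing that $\mathbf{x}\mapsto\|B^{\top}\mathbf{x}\|_{\infty}$ is a norm on $\vect{1}_n^{\perp}$ (because $\Ker(B^{\top})=\spn\{\vect{1}_n\}$ for connected $G$) immediately gives that $\bperp$ is a compact convex body, whereas the paper derives the explicit bound $|x_i|\le m\pi$ by a telescoping estimate along paths. Your direct verification of statement (ii) via $\exp(\imagunit(x_k'-x_j'))=\exp(\imagunit(\theta_k-\theta_j))$ on tree edges is also more explicit than the paper's one-line appeal to the path lemma. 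Both proofs rest on the same essential mechanism — connectivity plus the strict inequality $\gamma<\pi$ kill the $2\pi\Z^n$ lifting ambiguity — so what your version buys is economy and a cleaner factoring of the argument, not a new idea.
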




\begin{figure}[!htb]\centering
\includegraphics[width=.85\linewidth]{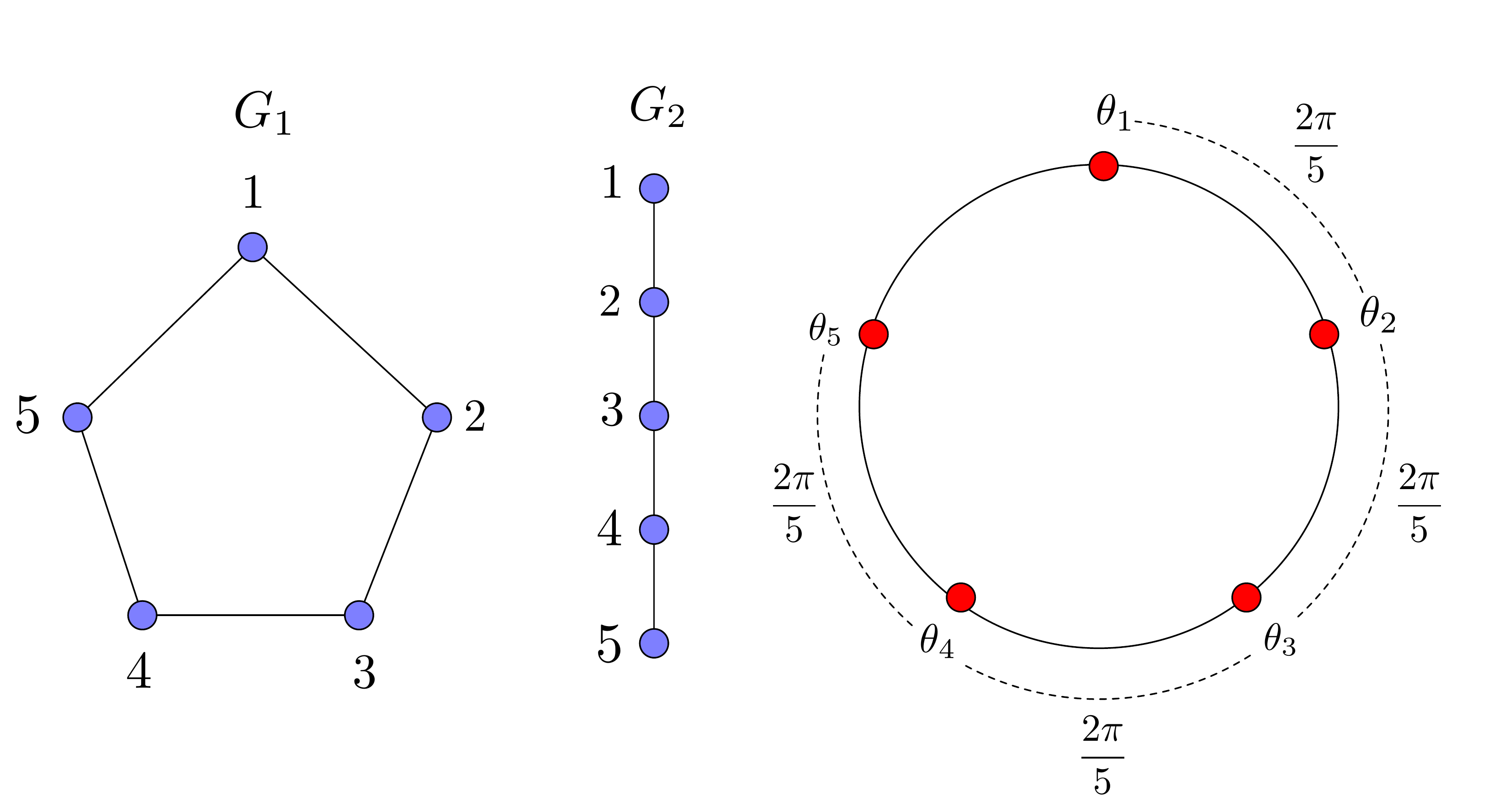}
\caption{For the graph $G_1$, the $5$-tuple defined by the red points
  on the circle belongs to
  the cohesive subset $\Delta^{G_1}(\tfrac{2\pi}{5})$ but it does not
  belong to 
  the embedded cohesive subset $S^{G_1}(\tfrac{2\pi}{5})$. For the
  graph $G_2$, this $5$-tuple belongs to the embedded cohesive subset
  $S^{G_2}(\tfrac{2\pi}{5})$ but it does not belong to the arc subset
  $\overline{\Gamma}(\tfrac{2\pi}{5})$.}
\label{fig:embed_cohesive}
\end{figure}

Based on
Theorem~\ref{thm:embedded_cohesive_characterization}\ref{p4:diffeomorphic_S},
in the rest of this paper, we identify the embedded cohesive subset
$S^G(\gamma)$ by the set $\bperp=\setdef{\mathbf{x}\in
  \vect{1}_n^{\perp}}{\|B^{\top}\mathbf{x}\|_{\infty}\le \gamma}$. We
conclude this subsection with an instructive comparison.

\begin{example}[\textbf{Comparing the three subsets}]
  Pick $\gamma\in [0,\pi)$. In this example, we show that each of the
    inclusions in
    Theorem~\ref{thm:embedded_cohesive_characterization}\ref{p1:inclusion-chain}
    is strict in general.
    \begin{enumerate}
    \item Consider a $5$-cycle graph $G_1$ with the vector
      $\theta=\begin{pmatrix} 0 &\frac{2\pi}{5} & \frac{4\pi}{5} &
      \frac{6\pi}{5} & \frac{8\pi}{5} \end{pmatrix}^{\top}$ as shown
      in Figure~\ref{fig:embed_cohesive}. One can verify that
      $|\theta_i-\theta_j|=\frac{2\pi}{5}$, for every $(i,j)\in
      \mathcal{E}$ and therefore $\theta\in
      \Delta^{G_1}(\gamma)$. However, using the embedding algorithm, it can be shown that
      $\theta\not\in S^{G_1}(\frac{2\pi}{5})$.
    \item Consider an acyclic graph $G_2$ with $5$ nodes and the vector
      $\theta=\begin{pmatrix} 0 &\frac{2\pi}{5} & \frac{4\pi}{5} &
      \frac{6\pi}{5} & \frac{8\pi}{5} \end{pmatrix}^{\top}$ as shown
      in Figure~\ref{fig:embed_cohesive}. Then it is clear that
      $\theta\not\in \Gamma(\frac{2\pi}{5})$. However, using the
      embedding algorithm, it can be
      shown that $\theta\in S^{G_2}(\frac{2\pi}{5})$.
\end{enumerate} 
\end{example}

\subsection{Kuramoto map and its properties}\label{subsec:kuramoto_map}
We now define the \emph{Kuramoto map}
$\map{\fK}{\vect{1}^{\perp}_n}{\Img(B^{\top})}$ by
\begin{equation}
  \label{def:Kuramoto-map}
  \fK (\mathbf{x})=\prj\sin(B^{\top}\mathbf{x}).
\end{equation}
This map arises naturally from the Kuramoto model as follows.  Recall
that, given nodal variables $\omega\in\vect{1}_n^\perp\subset\real^n$
and given the identification in
Theorem~\ref{thm:embedded_cohesive_characterization}\ref{p4:diffeomorphic_S},
the equilibrium equation~\eqref{eq:synchronization_manifold} can be
rewritten as 
\begin{equation} \label{eq:synchronization_manifold-x}
  \omega = B\mathcal{A}\sin(B^{\top}\mathbf{x})
\end{equation}
and can be interpreted as a nodal balance equation.  If one
left-multiplies this nodal balance equation by $B^{\top}L^{\dagger}$,
then one obtains an edge balance equation
\begin{equation}
  \label{eq:synchronization_manifold-edges}
  B^{\top}L^{\dagger}\omega=\prj \sin(B^{\top}\mathbf{x})=\fK (\mathbf{x}),
\end{equation}
where $B^{\top}L^{\dagger}\omega \in \Img(B^{\top})\subset\real^m$ can
be interpreted as a collection of flows through each edge.

The following theorem studies the properties of the map~$\fK$ and
shows the equivalence between the nodal and edge balance equations;
see Appendix~\ref{app:real_analytic} for the proof.

\begin{theorem}[\textbf{Basic properties of the Kuramoto map}]
  \label{thm:existence_uniqueness_S}
  Consider the Kuramoto model~\eqref{eq:kuramoto_model}, with the
  graph $G$, incidence matrix $B$, weight matrix $\mathcal{A}$, and
  natural frequencies $\omega\in \vect{1}_n^{\perp}$.  Define the
  Kuramoto map as in~\eqref{def:Kuramoto-map} and pick $\gamma\in
  [0,\frac{\pi}{2})$. Then the following statements hold:
\begin{enumerate}
\item\label{p4:equivalence} $\mathbf{x}^*$ is a synchronization
  manifold for the Kuramoto model~\eqref{eq:kuramoto_model} if and
  only if $\mathbf{x}^*$ solves the nodal balance
  equation~\eqref{eq:synchronization_manifold-x} if and only if
  $\mathbf{x}^*$ solves edge nodal balance
  equation~\eqref{eq:synchronization_manifold-edges};
\item\label{p3:f_one_to_one} the function $\fK$ is real analytic and
  one-to-one on $S^G(\gamma)$;
\item\label{p5:existence_uniqueness_S} if there exists a
  synchronization manifold $\mathbf{x}^*\in S^{G}(\gamma)$, then it is
  unique and locally exponentially stable.
\end{enumerate} 
\end{theorem}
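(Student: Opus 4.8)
The plan is to establish the three claims in order, since each builds on the previous one. For claim~\ref{p4:equivalence}, the first equivalence is essentially the definition of a synchronization manifold under the identification in Theorem~\ref{thm:embedded_cohesive_characterization}\ref{p4:diffeomorphic_S}, so the only real content is the equivalence of the nodal balance equation~\eqref{eq:synchronization_manifold-x} with the edge balance equation~\eqref{eq:synchronization_manifold-edges}. One direction is immediate: left-multiplying~\eqref{eq:synchronization_manifold-x} by $B^{\top}L^{\dagger}$ and using $\prj=B^{\top}L^{\dagger}B\mathcal{A}$ (Theorem~\ref{thm:decomposition}\ref{p2:oblique_projection}) yields~\eqref{eq:synchronization_manifold-edges}. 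For the converse, I would start from $B^{\top}L^{\dagger}\omega=B^{\top}L^{\dagger}B\mathcal{A}\sin(B^{\top}\mathbf{x})$ and left-multiply by $B\mathcal{A}$; the key identity to invoke is $B\mathcal{A}B^{\top}L^{\dagger}=LL^{\dagger}$, which is the orthogonal projection onto $\Img(L)=\Img(B)=\vect{1}_n^{\perp}$. Since both $\omega$ and $B\mathcal{A}\sin(B^{\top}\mathbf{x})$ already lie in $\vect{1}_n^{\perp}$, applying this projector to both sides recovers~\eqref{eq:synchronization_manifold-x} exactly. This step is routine linear algebra with pseudoinverses.

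For claim~\ref{p3:f_one_to_one}, real analyticity of $\fK$ is immediate since it is a composition of the linear map $B^{\top}$, the entrywise real-analytic function $\sin$, and the linear map $\prj$. The substantive part is injectivity on $S^G(\gamma)$ for $\gamma\in[0,\pi/2)$. Here I would argue by contradiction: suppose $\fK(\mathbf{x})=\fK(\mathbf{y})$ with $\mathbf{x},\mathbf{y}\in\mathrm{B}^G(\gamma)$ distinct. Using the characterization $\prj=B^{\top}L^{\dagger}B\mathcal{A}$ and the fact that $\prj$ restricted to $\Img(B^{\top})$ is the identity (Theorem~\ref{thm:decomposition}\ref{p1:idempotent}), the equation $\prj(\sin(B^{\top}\mathbf{x})-\sin(B^{\top}\mathbf{y}))=0$ should be converted into a nodal statement. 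The natural route is the standard monotonicity/convexity argument: consider the function $\mathbf{z}\mapsto$ (a suitable potential) whose gradient is $B\mathcal{A}\sin(B^{\top}\mathbf{z})$, note that this potential is strictly convex on the region where $\|B^{\top}\mathbf{z}\|_\infty<\pi/2$ because its Hessian is $B\mathcal{A}\diag(\cos(B^{\top}\mathbf{z}))B^{\top}$, which is positive definite on $\vect{1}_n^{\perp}$ when all edge differences are strictly less than $\pi/2$ in magnitude. Strict convexity forces the gradient to be injective, hence at most one solution of the nodal balance equation in this region. I would need to handle the boundary case $\gamma$ possibly equal to values where some edge difference equals $\pi/2$ — but since $\gamma<\pi/2$ is assumed, $\|B^{\top}\mathbf{x}\|_\infty\le\gamma<\pi/2$ and the Hessian is strictly positive definite throughout, so this is clean. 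I expect the convexity argument to be the main obstacle: one must be careful that the potential is genuinely defined on the Euclidean space $\vect{1}_n^{\perp}$ (not the torus) and that $S^G(\gamma)$ sits inside the convex region $\{\|B^{\top}\mathbf{z}\|_\infty<\pi/2\}$ as needed — but strictness is only needed on the open set, and $\gamma<\pi/2$ gives exactly that.

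For claim~\ref{p5:existence_uniqueness_S}, uniqueness follows immediately from claim~\ref{p3:f_one_to_one} combined with claim~\ref{p4:equivalence}: any synchronization manifold in $S^G(\gamma)$ solves~\eqref{eq:synchronization_manifold-edges}, i.e., is a preimage of $B^{\top}L^{\dagger}\omega$ under $\fK$, and $\fK$ is one-to-one there. For local exponential stability, I would linearize the Kuramoto dynamics~\eqref{eq:kuramoto_model} about the equilibrium $\mathbf{x}^*$; the Jacobian is $-B\mathcal{A}\diag(\cos(B^{\top}\mathbf{x}^*))B^{\top}=-L_{\cos(B^{\top}\mathbf{x}^*)}$ in the notation $L_{\mathbf{v}}=B\mathcal{A}\diag(\mathbf{v})B^{\top}$ introduced earlier. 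Since $\mathbf{x}^*\in S^G(\gamma)$ with $\gamma<\pi/2$ implies $\|B^{\top}\mathbf{x}^*\|_\infty\le\gamma<\pi/2$, every entry of $\cos(B^{\top}\mathbf{x}^*)$ is strictly positive, so $L_{\cos(B^{\top}\mathbf{x}^*)}$ is a positive-semidefinite Laplacian-type matrix whose kernel is exactly $\spn\{\vect{1}_n\}$; restricted to the state space $\torus^n/\mathrm{rot}$ (equivalently, to $\vect{1}_n^{\perp}$), it is positive definite. Hence the linearization has all eigenvalues with strictly negative real part on the relevant quotient, giving local exponential stability by the standard Lyapunov indirect method. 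This last step is standard once the sign of the cosines is controlled, which again is exactly what $\gamma<\pi/2$ buys.
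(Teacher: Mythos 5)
Your proposal is correct, and for part~\ref{p4:equivalence} it is essentially identical to the paper's argument: left-multiply by $B^{\top}L^{\dagger}$ in one direction, and by $B\mathcal{A}$ in the other, using $B\mathcal{A}B^{\top}L^{\dagger}=LL^{\dagger}=I_n-\tfrac{1}{n}\vect{1}_n\vect{1}_n^{\top}$ together with the fact that both sides already lie in $\vect{1}_n^{\perp}$. The only real difference is that for parts~\ref{p3:f_one_to_one} and~\ref{p5:existence_uniqueness_S} the paper does not argue at all: it states that injectivity of $\fK$ is ``a straightforward generalization of [Araposthatis--Sastry--Varaiya, Corollary~2]'' and that local exponential stability is proved in [D\"orfler--Chertkov--Bullo, Lemma~2]. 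Your self-contained arguments are exactly the content of those citations: strict convexity of the energy function with Hessian $L_{\cos(B^{\top}\mathbf{z})}=B\mathcal{A}\diag(\cos(B^{\top}\mathbf{z}))B^{\top}$, positive definite on $\vect{1}_n^{\perp}$ when $\gamma<\pi/2$, yields injectivity of the nodal map by the standard monotone-gradient argument on the convex set $\mathrm{B}^G(\gamma)$; and the Jacobian $-L_{\cos(B^{\top}\mathbf{x}^*)}$, negative definite on $\vect{1}_n^{\perp}$, gives exponential stability on the quotient $\torus^n/\mathrm{rot}$. One small imprecision: to pass from $\prj(\sin(B^{\top}\mathbf{x})-\sin(B^{\top}\mathbf{y}))=\vectorzeros[m]$ to the nodal statement, the relevant fact is not that $\prj$ is the identity on $\Img(B^{\top})$ (the difference of sines need not lie there) but that $\Ker(\prj)=\Ker(B\mathcal{A})$, i.e.\ left-multiplication by $B\mathcal{A}$ as in part~\ref{p4:equivalence}; your plan already performs this step correctly, so nothing breaks.
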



\section{Sufficient conditions for synchronization}\label{sec:synchronization_conditions}
In this section, we present novel sufficient conditions for existence and
uniqueness of the synchronization manifold for the Kuramoto model in the domain
$S^{G}(\gamma)$, for $\gamma\in [0,\tfrac{\pi}{2})$. We start with a useful
definition.
\begin{definition}[\textbf{Minimum amplification
      factor for scaled projection}]\label{def:maf} Consider an
  undirected weighted connected graph with incidence matrix $B$,
  weight matrix $\mathcal{A}$, and cutset projection matrix
  $\prj$. For $\gamma\in [0,\tfrac{\pi}{2})$ and
    $p\in[1,\infty)\union\{\infty\}$, define
      \begin{enumerate}
      \item the domain
        $D_p(\gamma)=\setdef{\mathbf{y}\in\real^m}{\|\mathbf{y}\|_{p}\le\gamma}
        \intersection \Img(B^{\top})$;
      \item for $\mathbf{y}\in D_p(\gamma)$, the \emph{scaled cutset projection
        operator}
        $\map{\prj\diag(\sinc(\mathbf{y}))}{\Img(B^{\top})}{\Img(B^{\top})}$;
        and
      \item the \emph{minimum amplification factor} of the scaled cutset
        projection $\prj\diag(\sinc(\mathbf{y}))$ on $D_p(\gamma)$ by:
        \begin{align*}
          \alpha_p(\gamma)= \min_{\mathbf{y}\in
            D_p(\gamma)}\min_{\begin{array}{c}\scriptstyle \mathbf{z}\in \Img(B^{\top}) \\[-3pt]
              \scriptstyle
              \|\mathbf{z}\|_p=1\end{array}}\|\prj\diag(\sinc(\mathbf{y}))\mathbf{z}\|_{p}. 
        \end{align*}
      \end{enumerate}
\end{definition}

Note that $\alpha_p(\gamma)$ is well-defined because
$(\mathbf{y},\mathbf{z})\mapsto\prj\diag(\sinc(\mathbf{y}))\mathbf{z}$
is a continuous function over a compact set.  The proof of the
following lemma is given in Appendix~\ref{app:maf}.

\begin{lemma}[\textbf{The minimum amplification factor is non-zero}]\label{thm:maf-non-zero}
  With the same notation and under the same assumptions as Definition~\ref{def:maf}, 
  the minimum amplification factor of the scaled projection satisfies
  $\alpha_p(\gamma)>0$.
\end{lemma}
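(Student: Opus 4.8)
The plan is to show that the continuous function
$(\mathbf{y},\mathbf{z})\mapsto\|\prj\diag(\sinc(\mathbf{y}))\mathbf{z}\|_p$
is \emph{strictly} positive on the compact set
$D_p(\gamma)\times\setdef{\mathbf{z}\in\Img(B^\top)}{\|\mathbf{z}\|_p=1}$. Since, as already observed, $\alpha_p(\gamma)$ is the minimum of this function over that compact set and the minimum is attained, this immediately gives $\alpha_p(\gamma)>0$. Equivalently, I will establish that $\prj\diag(\sinc(\mathbf{y}))\mathbf{z}\ne 0$ whenever $\mathbf{y}\in D_p(\gamma)$ and $\mathbf{z}\in\Img(B^\top)\setminus\{0\}$.

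The first step is to record two positivity facts. Fix $\mathbf{y}\in D_p(\gamma)$. Since $\|\mathbf{y}\|_\infty\le\|\mathbf{y}\|_p\le\gamma<\tfrac{\pi}{2}$, every entry of $\mathbf{y}$ lies in the open interval $(-\tfrac{\pi}{2},\tfrac{\pi}{2})$, on which $\sinc$ is strictly positive (with the convention $\sinc(0)=1$). Hence $\diag(\sinc(\mathbf{y}))$ is a diagonal matrix with strictly positive diagonal; in particular it is invertible, so $\mathbf{w}:=\diag(\sinc(\mathbf{y}))\mathbf{z}\ne 0$ for every $\mathbf{z}\ne 0$. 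Moreover, because all edge weights are positive, $\mathcal{A}\diag(\sinc(\mathbf{y}))$ is diagonal with strictly positive diagonal, hence positive definite. Note that using $\|\cdot\|_\infty\le\|\cdot\|_p$ here makes this argument uniform over all $p\in[1,\infty)\cup\{\infty\}$.

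The key step is to locate $\mathbf{w}$ relative to the decomposition of Theorem~\ref{thm:decomposition}. Recall that $\Ker(\prj)=\Ker(B\mathcal{A})$, since $\prj$ is the projection onto $\Img(B^\top)$ parallel to $\Ker(B\mathcal{A})$. I will use that the cutset space $\Img(B^\top)$ and the weighted cycle space $\Ker(B\mathcal{A})$ are orthogonal with respect to the inner product $\langle\mathbf{u},\mathbf{v}\rangle_{\mathcal{A}}=\mathbf{u}^\top\mathcal{A}\mathbf{v}$: indeed, for $\mathbf{u}=B^\top\mathbf{p}\in\Img(B^\top)$ and $\mathbf{v}\in\Ker(B\mathcal{A})$ one has $\mathbf{u}^\top\mathcal{A}\mathbf{v}=\mathbf{p}^\top(B\mathcal{A}\mathbf{v})=0$ (this $\mathcal{A}$-orthogonality is precisely what underlies the direct sum in Theorem~\ref{thm:decomposition}\ref{p1:edge_decomposition}). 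Now suppose, for contradiction, that $\prj\mathbf{w}=0$, i.e.\ $\mathbf{w}\in\Ker(B\mathcal{A})$. Applying the $\mathcal{A}$-orthogonality to $\mathbf{z}\in\Img(B^\top)$ and $\mathbf{w}\in\Ker(B\mathcal{A})$ yields $\mathbf{z}^\top\mathcal{A}\diag(\sinc(\mathbf{y}))\mathbf{z}=\mathbf{z}^\top\mathcal{A}\mathbf{w}=0$, which contradicts positive definiteness of $\mathcal{A}\diag(\sinc(\mathbf{y}))$ together with $\mathbf{z}\ne 0$. Therefore $\prj\diag(\sinc(\mathbf{y}))\mathbf{z}\ne 0$, and hence $\alpha_p(\gamma)>0$.

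No step in this argument is a genuine obstacle; the proof is short. The two points requiring a little care are (i) passing through $\|\cdot\|_\infty\le\|\cdot\|_p$ so that the $\sinc$-positivity estimate is valid for every norm parameter $p$, and (ii) invoking the $\mathcal{A}$-weighted orthogonality between the cutset and weighted cycle spaces rather than plain Euclidean orthogonality, which is what makes the argument go through when $\mathcal{A}\ne I_m$.
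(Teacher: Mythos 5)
Your proof is correct and follows essentially the same route as the paper's: both arguments reduce the claim to the identity $\mathbf{z}^\top\mathcal{A}\diag(\sinc(\mathbf{y}))\mathbf{z}=0$ and then invoke positive definiteness of the diagonal matrix $\mathcal{A}\diag(\sinc(\mathbf{y}))$ to force $\mathbf{z}=\vect{0}_m$. The $\mathcal{A}$-weighted orthogonality of $\Img(B^{\top})$ and $\Ker(B\mathcal{A})$ that you invoke is precisely the computation $\mathbf{z}^{\top}\mathcal{A}\prj=\mathbf{z}^{\top}\mathcal{A}$ (for $\mathbf{z}\in\Img(B^\top)$) used in the paper, so the two write-ups differ only in packaging.
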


\subsection{Main results}\label{sec:main-results}

Now, we are ready to state the main results of this paper. We start with a
family of general conditions for synchronization of Kuramoto
model~\eqref{eq:kuramoto_model}.

\begin{theorem}[\textbf{General sufficient conditions for synchronization}]
  \label{thm:existence+uniqueness_cond-new}
  Consider the Kuramoto model~\eqref{eq:kuramoto_model} with undirected weighted
  connected graph $G$, the incidence matrix $B$, the weight matrix
  $\mathcal{A}$, the cutset projection $\prj$, and frequencies $\omega\in
  \vect{1}_n^{\perp}$. For $\gamma\in [0,\tfrac{\pi}{2})$, if there exists 
  $p\in[1,\infty)\union\{\infty\}$ such that
  \begin{align}
    \tag{T1}\label{test:p-norm-new}
    \|B^{\top}L^{\dagger}\omega\|_{p} \le \alpha_{p}(\gamma)\gamma, 
  \end{align}
    then there exists a unique locally exponentially stable
   synchronization manifold $\mathbf{x}^*$ for the Kuramoto
   model~\eqref{eq:kuramoto_model} in the domain $S^{G}(\gamma)$.
\end{theorem}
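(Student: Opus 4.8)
The plan is to apply the Brouwer Fixed-Point Theorem to a suitable self-map of a compact convex set built from the edge balance equation~\eqref{eq:synchronization_manifold-edges}. By Theorem~\ref{thm:existence_uniqueness_S}\ref{p4:equivalence}, finding a synchronization manifold $\mathbf{x}^*\in S^G(\gamma)$ is equivalent to solving $\fK(\mathbf{x}^*) = B^{\top}L^{\dagger}\omega$; and by Theorem~\ref{thm:embedded_cohesive_characterization} the domain $S^G(\gamma)$ is identified with $\mathrm{B}^G(\gamma) = \{\mathbf{x}\in\vect{1}_n^\perp \mid \|B^{\top}\mathbf{x}\|_\infty\le\gamma\}$. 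First I would change variables to the edge space: write $\mathbf{y} = B^{\top}\mathbf{x}\in\Img(B^{\top})$, so that $\mathbf{x}\in\mathrm{B}^G(\gamma)$ corresponds to $\mathbf{y}\in D_\infty(\gamma)$, and since $\|\mathbf{y}\|_p\le\|\mathbf{y}\|_\infty\cdot(\text{const})$ is not quite what we want, I would instead work directly on $D_p(\gamma)\subseteq D_\infty(\gamma)$ (the $p$-norm ball is contained in the $\infty$-norm ball only after scaling, so more carefully: the hypothesis is stated with the same $p$ throughout, so I would set up the fixed-point map on $D_p(\gamma)$ and at the end verify the solution lies in $S^G(\gamma) = D_\infty(\gamma)$ using that $D_p(\gamma)\subseteq D_\infty(\gamma)$ fails in general — hence the correct move is to note $D_p(\gamma)\cap\{\|\cdot\|_\infty\le\gamma\}$, or simply observe that on $\Img(B^\top)$ solving in the $p$-ball of radius $\gamma$ places $\mathbf y$ with $\|\mathbf y\|_p\le\gamma$, hence $\|\mathbf y\|_\infty\le\gamma$, which does hold). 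The key algebraic identity is $\prj\sin(\mathbf{y}) = \prj\diag(\sinc(\mathbf{y}))\mathbf{y}$, valid because $\sin(y_e) = \sinc(y_e)\,y_e$ entrywise; this rewrites the equation as $\prj\diag(\sinc(\mathbf{y}))\mathbf{y} = B^{\top}L^{\dagger}\omega$.

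Next I would define, for $\mathbf{y}\in D_p(\gamma)$, the linear operator $M(\mathbf{y}) = \prj\diag(\sinc(\mathbf{y}))$ restricted to $\Img(B^{\top})$, and show it is invertible as a map $\Img(B^{\top})\to\Img(B^{\top})$: indeed, by Definition~\ref{def:maf} and Lemma~\ref{thm:maf-non-zero}, $\|M(\mathbf{y})\mathbf{z}\|_p \ge \alpha_p(\gamma)\|\mathbf{z}\|_p$ for all $\mathbf{z}\in\Img(B^{\top})$ with $\alpha_p(\gamma)>0$, so $M(\mathbf{y})$ has trivial kernel on $\Img(B^{\top})$ and, being an endomorphism of the finite-dimensional space $\Img(B^{\top})$, is a bijection with $\|M(\mathbf{y})^{-1}\|_p \le \alpha_p(\gamma)^{-1}$ in the induced $p$-norm on $\Img(B^\top)$. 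Then I would define the candidate fixed-point map $T:\Img(B^{\top})\to\Img(B^{\top})$ by $T(\mathbf{y}) = M(\mathbf{y})^{-1}\,B^{\top}L^{\dagger}\omega$; note $B^{\top}L^{\dagger}\omega\in\Img(B^{\top})$, so $T$ is well-defined, and it is continuous because $\mathbf{y}\mapsto M(\mathbf{y})$ is continuous and matrix inversion is continuous on the (open) set of invertible operators. A solution of the edge balance equation in $D_p(\gamma)$ is exactly a fixed point of $T$ lying in $D_p(\gamma)$.

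To invoke Brouwer on the compact convex set $D_p(\gamma)$, I must verify $T(D_p(\gamma))\subseteq D_p(\gamma)$. For $\mathbf{y}\in D_p(\gamma)$,
\begin{align*}
  \|T(\mathbf{y})\|_p = \|M(\mathbf{y})^{-1}B^{\top}L^{\dagger}\omega\|_p
  \le \alpha_p(\gamma)^{-1}\,\|B^{\top}L^{\dagger}\omega\|_p
  \le \alpha_p(\gamma)^{-1}\,\alpha_p(\gamma)\,\gamma = \gamma,
\end{align*}
using hypothesis~\eqref{test:p-norm-new} in the last inequality; hence $T(\mathbf{y})\in D_p(\gamma)$. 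Brouwer then yields a fixed point $\mathbf{y}^*\in D_p(\gamma)$, i.e.\ $\prj\sin(\mathbf{y}^*) = B^{\top}L^{\dagger}\omega$, and since $\mathbf{y}^*\in\Img(B^\top)$ with $\|\mathbf{y}^*\|_p\le\gamma$ we get $\|\mathbf{y}^*\|_\infty\le\gamma$, so the corresponding $\mathbf{x}^*$ (the unique preimage in $\vect{1}_n^\perp$, since $B^\top$ restricted to $\vect{1}_n^\perp$ is injective) lies in $\mathrm{B}^G(\gamma)$, i.e.\ $\mathbf{x}^*\in S^G(\gamma)$. Uniqueness and local exponential stability in $S^G(\gamma)$ then follow immediately from Theorem~\ref{thm:existence_uniqueness_S}\ref{p3:f_one_to_one} and \ref{p5:existence_uniqueness_S}. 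I expect the main obstacle to be the domain bookkeeping — ensuring that the fixed point found in the $p$-norm ball $D_p(\gamma)$ genuinely lands in the $\infty$-norm ball defining $S^G(\gamma)$, and that the operator $M(\mathbf{y})^{-1}$ is correctly interpreted as acting on $\Img(B^\top)$ rather than all of $\real^m$ (where $\prj$ is only a projection and $\diag(\sinc(\mathbf{y}))$ need not preserve $\Img(B^\top)$, so the composition's invertibility genuinely requires the restriction together with the lower bound $\alpha_p(\gamma)>0$).
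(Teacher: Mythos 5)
Your proposal is correct and follows essentially the same route as the paper: rewrite the equilibrium condition as $\prj\diag(\sinc(\mathbf{y}))\mathbf{y}=B^{\top}L^{\dagger}\omega$, invert the scaled cutset projection on $\Img(B^{\top})$, use $\alpha_p(\gamma)$ together with hypothesis~\eqref{test:p-norm-new} to obtain a continuous self-map of the compact convex set $D_p(\gamma)$, apply the Brouwer Fixed-Point Theorem, pull the fixed point back into $S^G(\gamma)$ via $\|\cdot\|_\infty\le\|\cdot\|_p$, and conclude uniqueness and stability from Theorem~\ref{thm:existence_uniqueness_S}. The only (harmless) deviation is that you establish invertibility and the bound $\|M(\mathbf{y})^{-1}\|_p\le\alpha_p(\gamma)^{-1}$ abstractly from Lemma~\ref{thm:maf-non-zero} and obtain continuity of the fixed-point map from continuity of inversion on the invertible endomorphisms of $\Img(B^{\top})$, whereas the paper exhibits the explicit inverse $Q(\mathbf{y})^{-1}=B^{\top}L^{\dagger}_{\sinc(\mathbf{y})}B\mathcal{A}$ (Lemma~\ref{lem:Q-inverse}) and invokes continuity of the pseudoinverse.
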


Note that one can generalize test~\eqref{test:p-norm-new} to the setting of
arbitrary sub-multiplicative norms, with the caveat that the solution may take
value outside $S^G(\gamma)$.

\begin{proof}[Proof of Theorem~\ref{thm:existence+uniqueness_cond-new}]
We first show that, for every $p\in [1,\infty)\union\{\infty\}$, we have
  $D_p(\gamma)\subseteq B^{\top}(S^G(\gamma))$. Suppose that $\mathbf{y}\in
  D_p(\gamma)$. Then, by definition of $D_p(\gamma)$, there exists
  $\xi\in\vect{1}_n^{\perp}$ such that $\mathbf{y}=B^{\top}\xi$ and
  $\|B^{\top}\xi\|_p\le \gamma$. Note that, for any vector $y$, the $p$-norm of
  $y$ is larger than or equal to the $\infty$-norm of $y$. This implies that
  $\|B^{\top}\xi\|_{\infty} \le \|B^{\top}\xi\|_p\le \gamma$. Therefore, by
  definition of $S^{G}(\gamma)$, we obtain $\xi\in S^{G}(\gamma)$ and, as a
  result, we have $\mathbf{y}=B^{\top}\xi\in B^{\top}(S^{G}(\gamma))$. Suppose
  that $\gamma\in [0,\tfrac{\pi}{2})$ and $\mathbf{x}\in S^{G}(\gamma)$. Then
  $\mathbf{x}$ is a synchronization manifold for the Kuramoto
  model~\eqref{eq:kuramoto_model} if and only if
\begin{align*}
\prj\diag(\sinc(B^{\top}\mathbf{x}))B^{\top}\mathbf{x}=B^{\top}L^{\dagger}\omega.
\end{align*}
For every $\mathbf{y}\in D_p(\gamma)$, define the map $\map{Q(\mathbf{y})}{\Img(B^{\top})}{\Img(B^{\top})}$ by
\begin{align*}
Q(\mathbf{y})(\mathbf{z})=\prj\diag(\sinc(\mathbf{y}))\mathbf{z}.
\end{align*}
The following lemma, whose proof is given in
Appendix~\ref{app:lem:Q-inverse}, studies some of the properties of
the map $Q(\mathbf{y})$.
\begin{lemma}\label{lem:Q-inverse}
For every $\mathbf{y}\in D_p(\gamma)$, then the map $Q(\mathbf{y})$ is
invertible and, for every $\mathbf{z}\in \Img(B^{\top})$,
\begin{align}
 \left(Q(\mathbf{y})\right)^{-1}\mathbf{z}=(B^{\top}L^{\dagger}_{\sinc(\mathbf{y})}B\mathcal{A})\mathbf{z}.
\end{align}
\end{lemma}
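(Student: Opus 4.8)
The plan is to show directly that the matrix $M(\mathbf{y}) := B^{\top}L^{\dagger}_{\sinc(\mathbf{y})}B\mathcal{A}$, restricted to $\Img(B^{\top})$, is a two-sided inverse of $Q(\mathbf{y})$. Recall $Q(\mathbf{y})(\mathbf{z}) = \prj\diag(\sinc(\mathbf{y}))\mathbf{z} = B^{\top}L^{\dagger}B\mathcal{A}\diag(\sinc(\mathbf{y}))\mathbf{z}$ by the formula~\eqref{def:prj} for $\prj$, and that $L_{\sinc(\mathbf{y})} = B\mathcal{A}\diag(\sinc(\mathbf{y}))B^{\top}$ by the definition of $L_{\mathbf v}$ given in the Notation section. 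First I would observe that, since $\gamma \in [0,\tfrac{\pi}{2})$ and $\|\mathbf{y}\|_{\infty}\le\|\mathbf{y}\|_p\le\gamma$ for $\mathbf{y}\in D_p(\gamma)$, every entry of $\sinc(\mathbf{y})$ is strictly positive (as $\sinc$ is positive on $(-\tfrac{\pi}{2},\tfrac{\pi}{2})$); hence $\mathcal{A}\diag(\sinc(\mathbf{y}))$ is a positive definite diagonal matrix and $L_{\sinc(\mathbf{y})}$ is a weighted Laplacian of the same connected graph $G$. Consequently $L_{\sinc(\mathbf{y})}$ has the same kernel $\spn\{\vect{1}_n\}$ and the same image $\vect{1}_n^{\perp}$ as $L$, and $L^{\dagger}_{\sinc(\mathbf{y})}L_{\sinc(\mathbf{y})} = L_{\sinc(\mathbf{y})}L^{\dagger}_{\sinc(\mathbf{y})}$ equals the orthogonal projection onto $\vect{1}_n^{\perp}$.

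Next I would compute the composition on a generic element $\mathbf{z} = B^{\top}\xi \in \Img(B^{\top})$. Applying $Q(\mathbf{y})$ first:
\begin{align*}
Q(\mathbf{y})(\mathbf{z}) = B^{\top}L^{\dagger}\,B\mathcal{A}\diag(\sinc(\mathbf{y}))\,\mathbf{z},
\end{align*}
and then $M(\mathbf{y})$:
\begin{align*}
M(\mathbf{y})\,Q(\mathbf{y})(\mathbf{z}) = B^{\top}L^{\dagger}_{\sinc(\mathbf{y})}\,B\mathcal{A}\diag(\sinc(\mathbf{y}))\,B^{\top}\,L^{\dagger}\,B\mathcal{A}\diag(\sinc(\mathbf{y}))\,\mathbf{z}.
\end{align*}
The middle factor $B\mathcal{A}\diag(\sinc(\mathbf{y}))B^{\top}$ is exactly $L_{\sinc(\mathbf{y})}$, so $L^{\dagger}_{\sinc(\mathbf{y})}L_{\sinc(\mathbf{y})}$ collapses to the orthogonal projection $\Pi$ onto $\vect{1}_n^{\perp}$; since $L^{\dagger}B\mathcal{A}\diag(\sinc(\mathbf{y}))\mathbf{z}$ already lies in $\Img(L^{\dagger})=\vect{1}_n^{\perp}$, the factor $\Pi$ acts as the identity. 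This leaves $B^{\top}L^{\dagger}B\mathcal{A}\diag(\sinc(\mathbf{y}))\mathbf{z} = Q(\mathbf{y})(\mathbf{z})$ — wait, that is not the identity, so instead I would run the other order: compute $Q(\mathbf{y})\,M(\mathbf{y})$ and $M(\mathbf{y})\,Q(\mathbf{y})$ symmetrically, using at the key step that $B^{\top}\xi \mapsto B^{\top}L^{\dagger}(B\mathcal{A}B^{\top})\xi = B^{\top}L^{\dagger}L\xi = B^{\top}\Pi\xi$, and that on $\Img(B^{\top})$ one may take $\xi\in\vect{1}_n^{\perp}$ so $B^{\top}\Pi\xi = B^{\top}\xi = \mathbf{z}$. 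The clean way to organize this is: both $\prj = B^{\top}L^{\dagger}B\mathcal{A}$ and the analogous operator $\prj_{\sinc(\mathbf{y})} := B^{\top}L^{\dagger}_{\sinc(\mathbf{y})}B\mathcal{A}\diag(\sinc(\mathbf{y}))$ act as the identity on $\Img(B^{\top})$ (the latter because, by Theorem~\ref{thm:decomposition} applied to the graph with weights $\mathcal{A}\diag(\sinc(\mathbf{y}))$, it is the cutset projection of that reweighted graph, whose cutset space is still $\Img(B^{\top})$). Then $M(\mathbf{y})Q(\mathbf{y})\mathbf{z} = B^{\top}L^{\dagger}_{\sinc(\mathbf{y})}B\mathcal{A}\diag(\sinc(\mathbf{y}))\big(B^{\top}L^{\dagger}B\mathcal{A}\diag(\sinc(\mathbf{y}))\mathbf{z}\big)$; absorbing the inner $B^{\top}$ through $L^{\dagger}B\mathcal{A}$ produces, on $\Img(B^{\top})$, the vector $\diag(\sinc(\mathbf{y}))\mathbf{z}$ hit by $B^{\top}L^{\dagger}_{\sinc(\mathbf{y})}B\mathcal{A}$… the bookkeeping is finicky but each move is one of the three identities $L^{\dagger}L = \Pi$, $\Pi\xi = \xi$ on $\vect{1}_n^{\perp}$, $B^{\top}\Pi = B^{\top}$.

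Alternatively — and this is the route I would actually write up, since it sidesteps all the $\diag(\sinc(\mathbf{y}))$ bookkeeping — I would invoke Theorem~\ref{thm:decomposition} directly for the reweighted graph $G'$ with weight matrix $\mathcal{A}' := \mathcal{A}\diag(\sinc(\mathbf{y}))$ (legitimate because $\mathcal{A}'$ is a positive diagonal matrix, as established above). Its Laplacian is $L' = B\mathcal{A}'B^{\top} = L_{\sinc(\mathbf{y})}$, and its cutset projection is $\prj' = B^{\top}(L')^{\dagger}B\mathcal{A}' = B^{\top}L^{\dagger}_{\sinc(\mathbf{y})}B\mathcal{A}\diag(\sinc(\mathbf{y}))$; by Theorem~\ref{thm:decomposition}\ref{p1:idempotent} this is idempotent and acts as the identity on its image $\Img(B^{\top})$. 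Now $\prj' = M(\mathbf{y})\,\diag(\sinc(\mathbf{y}))$, and on $\Img(B^{\top})$ we have $Q(\mathbf{y})(\mathbf{z}) = \prj\,\diag(\sinc(\mathbf{y}))\mathbf{z}$. Since $\prj$ is the identity on $\Img(B^{\top})$ and $\diag(\sinc(\mathbf{y}))\mathbf{z}\in\Img(B^{\top})$ (it equals $(\prj')^{-1}$-image stuff)… the cleanest statement: for $\mathbf{z}\in\Img(B^{\top})$, $\diag(\sinc(\mathbf{y}))\,Q(\mathbf{y})(\mathbf{z}) $ need not lie in $\Img(B^{\top})$, but $M(\mathbf{y})\,Q(\mathbf{y})(\mathbf{z}) = B^{\top}L^{\dagger}_{\sinc(\mathbf{y})}B\mathcal{A}\,\big(\diag(\sinc(\mathbf{y}))\prj\diag(\sinc(\mathbf{y}))\mathbf{z}\big)$, and one shows $\diag(\sinc(\mathbf{y}))\prj\diag(\sinc(\mathbf{y}))\mathbf{z}$ and $\mathcal{A}$ interact so that $B\mathcal{A}$ applied to it equals $B\mathcal{A}\diag(\sinc(\mathbf{y}))\mathbf{z} = L_{\sinc(\mathbf{y})}\cdot(L^{\dagger}B\mathcal{A}\text{-preimage})$. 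The main obstacle is precisely keeping this chain of projector identities straight: the subtlety is that $\diag(\sinc(\mathbf{y}))$ does not preserve $\Img(B^{\top})$, so one cannot naively cancel, and the resolution is to always group $\diag(\sinc(\mathbf{y}))$ together with the $B\mathcal{A}$ to its left to re-form $L_{\sinc(\mathbf{y})}$, then use $L^{\dagger}_{\sinc(\mathbf{y})}L_{\sinc(\mathbf{y})} = \Pi$ and that the surviving vector lies in $\vect{1}_n^{\perp}$. Invertibility of $Q(\mathbf{y})$ itself follows for free once $M(\mathbf{y})$ is exhibited as a two-sided inverse on $\Img(B^{\top})$; independently it also follows from Lemma~\ref{thm:maf-non-zero}, since $\alpha_p(\gamma)>0$ forces $Q(\mathbf{y})$ to be injective, hence bijective on the finite-dimensional space $\Img(B^{\top})$.
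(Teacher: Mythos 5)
Your overall strategy is sound and every ingredient needed for the proof already appears in your proposal: the positivity of the entries of $\sinc(\mathbf{y})$ on $D_p(\gamma)$, the consequence that $L_{\sinc(\mathbf{y})}$ is the Laplacian of a connected reweighted graph so that $L^{\dagger}_{\sinc(\mathbf{y})}L_{\sinc(\mathbf{y})}=L_{\sinc(\mathbf{y})}L^{\dagger}_{\sinc(\mathbf{y})}=I_n-\tfrac{1}{n}\vect{1}_n\vect{1}_n^{\top}$, and the observation that $\prj':=B^{\top}L^{\dagger}_{\sinc(\mathbf{y})}B\mathcal{A}\diag(\sinc(\mathbf{y}))$ is the cutset projection of that reweighted graph and therefore acts as the identity on $\Img(B^{\top})$. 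The genuine problem is that both of your attempted verifications are derailed by the same transcription error: when forming the composition $M(\mathbf{y})\,Q(\mathbf{y})$ you write the left factor as $B^{\top}L^{\dagger}_{\sinc(\mathbf{y})}B\mathcal{A}\diag(\sinc(\mathbf{y}))$, i.e.\ you silently replace $M(\mathbf{y})=B^{\top}L^{\dagger}_{\sinc(\mathbf{y})}B\mathcal{A}$ by $\prj'$. That spurious factor $\diag(\sinc(\mathbf{y}))$ is exactly what produces the non-identity answer $Q(\mathbf{y})(\mathbf{z})$ and triggers your ``wait, that is not the identity,'' after which the write-up never recovers and ends with an unfinished gesture (``one shows \dots interact so that \dots''). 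With the correct $M(\mathbf{y})$ the order you first tried closes in one line, using precisely the fact about $\prj'$ that you had already established:
\begin{align*}
M(\mathbf{y})\,Q(\mathbf{y})\,\mathbf{z}
&= B^{\top}L^{\dagger}_{\sinc(\mathbf{y})}\bigl(B\mathcal{A}B^{\top}\bigr)L^{\dagger}B\mathcal{A}\diag(\sinc(\mathbf{y}))\mathbf{z} \\
&= B^{\top}L^{\dagger}_{\sinc(\mathbf{y})}\,LL^{\dagger}\,B\mathcal{A}\diag(\sinc(\mathbf{y}))\mathbf{z} \\
&= B^{\top}L^{\dagger}_{\sinc(\mathbf{y})}B\mathcal{A}\diag(\sinc(\mathbf{y}))\mathbf{z}
= \prj'\mathbf{z}=\mathbf{z},
\end{align*}
where the third equality uses $LL^{\dagger}B=(I_n-\tfrac{1}{n}\vect{1}_n\vect{1}_n^{\top})B=B$. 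Since $Q(\mathbf{y})$ and $M(\mathbf{y})$ are linear maps of the finite-dimensional space $\Img(B^{\top})$ into itself, this one-sided identity already gives invertibility and $Q(\mathbf{y})^{-1}=M(\mathbf{y})$; your alternative route to injectivity via Lemma~\ref{thm:maf-non-zero} is also valid and non-circular, but it only yields invertibility, not the formula.

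For comparison, the paper verifies the opposite composition: writing $\mathbf{z}=B^{\top}\xi$ with $\xi\in\vect{1}_n^{\perp}$, it computes $Q(\mathbf{y})\scirc T_{\mathbf{y}}(\mathbf{z})=B^{\top}L^{\dagger}L_{\sinc(\mathbf{y})}L^{\dagger}_{\sinc(\mathbf{y})}L\xi=B^{\top}L^{\dagger}L\xi=\mathbf{z}$, where the key step is collapsing $B\mathcal{A}\diag(\sinc(\mathbf{y}))B^{\top}$ into $L_{\sinc(\mathbf{y})}$ against its pseudoinverse. Both directions are equally short once the operators are written down correctly; your reweighted-cutset-projection viewpoint is a legitimate and arguably more conceptual packaging of the same algebra, but as submitted the proposal does not contain a completed correct computation.
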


Now we get back to the proof of
Theorem~\ref{thm:existence+uniqueness_cond-new}. For every $p\in
[1,\infty)\cup\{\infty\}$ and every $\gamma\in [0,\tfrac{\pi}{2})$
  define the map $\map{h_{p}}{D_{p}(\gamma)}{\Img(B^{\top})}$ by
\begin{align*}
h_p(\mathbf{y})=\left(Q(\mathbf{y})\right)^{-1}(B^{\top}L^{\dagger}\omega).
\end{align*}
Note that by Lemma~\ref{lem:Q-inverse}, we have 
\begin{align*}
h_p(\mathbf{y})=(B^{\top}L^{\dagger}_{\sinc(\mathbf{y})} B\mathcal{A})(B^{\top}L^{\dagger}\omega)=B^{\top}L^{\dagger}_{\sinc(\mathbf{y})}\omega.
\end{align*}
For every $\mathbf{y}\in D_p(\gamma)$, we have
$\dim(\Img(L_{\sinc(\mathbf{y})}))=n-1$. Therefore,
by~\cite[Theorem 4.2]{VR:97}, the map $h_p$ is continuous on
$D_p(\gamma)$. We first show that, if the assumption~\eqref{test:p-norm-new} holds, then
$h_p(\gamma)\subseteq D_p(\gamma)$. Given $\omega\in \vect{1}_n^{\perp}$, note the
following inequality:
\begin{align}\label{eq:bound-hp}
\|h_p(\mathbf{y})\|_p&=\|\left(Q(\mathbf{y})\right)^{-1}(B^{\top}L^{\dagger}\omega)\|_p\\
               &\le \max_{\mathbf{y}\in
  D_p(\gamma)}\|\left(Q(\mathbf{y})\right)^{-1}\|_p\|B^{\top}L^{\dagger}\omega\|_p.\nonumber
\end{align}
Since $Q(\mathbf{y})$ is invertible, using
Lemma~\ref{thm:min-amplification} in Appendix~\ref{app:maf},
\begin{align}\label{eq:min-max}
\max_{\mathbf{y}\in
  D_p(\gamma)}\|\left(Q(\mathbf{y})\right)^{-1}\|_p&=\Big(\min_{\mathbf{y}\in D_p(\gamma)}
\min_{\substack{\mathbf{z}\in\Img(B^{\top}),\\
  \|\mathbf{z}\|_p=1}}\|Q(\mathbf{y})\mathbf{z}\|_p \Big)^{-1}\nonumber\\
                                                      & = \frac{1}{\alpha_p(\gamma)}.
\end{align}
Combining the inequalities~\eqref{eq:bound-hp},~\eqref{eq:min-max},
and~\eqref{test:p-norm-new}, we obtain
\begin{align*}
  \|h_p(\mathbf{y})\|_p\le
  \frac{1}{\alpha_p(\gamma)}\|B^{\top}L^{\dagger}\omega\|_p
  \le \frac{1}{\alpha_p(\gamma)}\alpha_p(\gamma)\gamma\le \gamma.
\end{align*}
In summary, $h_p$ is a continuous map from a compact convex set into
itself. Therefore, by the Brouwer Fixed-Point Theorem, $h_p$ has a
fixed-point in $D_{p}(\gamma)$. Since, for every $p\in
[1,\infty)\union\{\infty\}$, we have $D_p(\gamma)\subseteq
  B^{\top}(S^G(\gamma))$, there exists $\mathbf{x}\in S^G(\gamma)$ such
  that $h_p(B^{\top}\mathbf{x})=B^{\top}\mathbf{x}$. Therefore, we have
\begin{align*}
  B^{\top}L^{\dagger}\omega=\prj\sin(B^{\top}\mathbf{x}).
\end{align*}
The fact that $\mathbf{x}$ is the unique synchronization manifold of
the Kuramoto model~\eqref{eq:kuramoto_model} in $S^G(\gamma)$ follows
from Theorem~\ref{thm:existence_uniqueness_S} parts~\ref{p4:equivalence} and~\ref{p5:existence_uniqueness_S}.
\end{proof}

Theorem~\ref{thm:existence+uniqueness_cond-new} presents a novel
family of sufficient synchronization conditions for the Kuramoto
model~\eqref{eq:kuramoto_model}. However, these tests require the
computation of the minimum amplification factor $\alpha_p(\gamma)$,
that is, the solution to an optimization problem (see
Definition~\ref{def:maf}) that is generally nonconvex. At this time
we do not know of any reliable numerical method to compute $\alpha_p(\gamma)$ for large
dimensional systems. Therefore, in what follows, we focus on finding
explicit lower bounds on $\alpha_p(\gamma)$, thereby obtaining
computable synchronization tests.

\begin{theorem}[\textbf{Sufficient conditions for synchronization
      based on $2$-norm}]\label{thm:2-norm-computable} Consider the
  Kuramoto model~\eqref{eq:kuramoto_model} with undirected unweighted
  connected graph $G$, the incidence matrix $B$, the cutset projection matrix $\prj$, and frequencies $\omega\in
  \vect{1}_n^{\perp}$. Then the following statements hold:
   \begin{enumerate}
   \item\label{p1:2-norm-maf} for every $\gamma\in [0,\frac{\pi}{2})$, 
   \begin{align*}
   \alpha_2(\gamma)\ge \sinc(\gamma);
   \end{align*}
   \item\label{p2:2-norm-new} for every $\gamma\in
       [0,\frac{\pi}{2})$, if the following condition holds: 
   \begin{align}
   \tag{T2}\label{test:2-norm-new}
   \|B^{\top}L^{\dagger}\omega\|_2&\le\sin(\gamma),
   \end{align}
   then there exists a unique locally exponentially stable
   synchronization manifold $\mathbf{x}^*$ for the Kuramoto
   model~\eqref{eq:kuramoto_model} in the domain $S^{G}(\gamma)$.
   \end{enumerate}
   \end{theorem}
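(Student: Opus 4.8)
The plan is to prove part~\ref{p1:2-norm-maf} by a short linear-algebra estimate that crucially exploits the orthogonality of $\prj$ in the unweighted case, and then to obtain part~\ref{p2:2-norm-new} as an immediate consequence of Theorem~\ref{thm:existence+uniqueness_cond-new} with $p=2$.

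For part~\ref{p1:2-norm-maf}, fix $\gamma\in[0,\tfrac{\pi}{2})$. Since $G$ is unweighted, Theorem~\ref{thm:projection_property}\ref{p2:orthogonal} says $\prj$ is the \emph{orthogonal} projection onto $\Img(B^{\top})$; in particular $\prj=\prj^{\top}$, $\prj^2=\prj$, and $\prj\mathbf{z}=\mathbf{z}$ for every $\mathbf{z}\in\Img(B^{\top})$. First I would take an arbitrary $\mathbf{y}\in D_2(\gamma)$ and note that $\|\mathbf{y}\|_2\le\gamma<\tfrac{\pi}{2}$ forces $|y_e|\le\gamma$ for each edge $e$; since $\sinc$ is even and decreasing on $[0,\tfrac{\pi}{2})$, this gives $\sinc(y_e)\ge\sinc(\gamma)>0$, hence the diagonal matrix $D:=\diag(\sinc(\mathbf{y}))$ satisfies $D\succeq\sinc(\gamma)\,I_m$. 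Next, for $\mathbf{z}\in\Img(B^{\top})$ with $\|\mathbf{z}\|_2=1$, I would apply Cauchy--Schwarz together with the symmetry/idempotency of $\prj$:
\[
\|\prj D\mathbf{z}\|_2 \;=\; \|\mathbf{z}\|_2\,\|\prj D\mathbf{z}\|_2 \;\ge\; \mathbf{z}^{\top}\prj D\mathbf{z} \;=\; (\prj\mathbf{z})^{\top}D\mathbf{z} \;=\; \mathbf{z}^{\top}D\mathbf{z} \;\ge\; \sinc(\gamma)\,\|\mathbf{z}\|_2^2 \;=\; \sinc(\gamma).
\]
Taking the minimum over $\mathbf{y}\in D_2(\gamma)$ and over unit vectors $\mathbf{z}\in\Img(B^{\top})$ yields $\alpha_2(\gamma)\ge\sinc(\gamma)$, which is part~\ref{p1:2-norm-maf}.

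For part~\ref{p2:2-norm-new}, I would simply combine the above bound with the hypothesis~\eqref{test:2-norm-new}: using $\sin(\gamma)=\gamma\,\sinc(\gamma)$ and part~\ref{p1:2-norm-maf},
\[
\|B^{\top}L^{\dagger}\omega\|_2 \;\le\; \sin(\gamma) \;=\; \gamma\,\sinc(\gamma) \;\le\; \gamma\,\alpha_2(\gamma),
\]
so condition~\eqref{test:p-norm-new} of Theorem~\ref{thm:existence+uniqueness_cond-new} holds with $p=2$, and the existence, uniqueness, and local exponential stability of the synchronization manifold in $S^{G}(\gamma)$ follow directly (the case $\gamma=0$, where $\omega=0$, being trivial).

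The only delicate point---hardly an obstacle---is justifying the per-edge inequality $\sinc(y_e)\ge\sinc(\gamma)$, i.e.\ the monotonicity of $\sinc$ on $[0,\tfrac{\pi}{2})$, which follows from $\sinc'(t)=(t\cos t-\sin t)/t^{2}<0$ there. I would also emphasize that unweightedness enters in exactly one place, namely to make $\prj$ orthogonal (equivalently symmetric), since that is what lets $\mathbf{z}^{\top}\prj D\mathbf{z}$ collapse to the quadratic form $\mathbf{z}^{\top}D\mathbf{z}$; this is precisely why the clean estimate $\alpha_2(\gamma)\ge\sinc(\gamma)$ is not expected to persist verbatim for weighted graphs.
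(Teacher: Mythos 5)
Your proof is correct, and part~\ref{p1:2-norm-maf} is obtained by a genuinely different and more elementary route than the paper's. The paper first proves the matrix inequality $L_{\sinc(\mathbf{x})}\succeq\sinc(\gamma)L$, passes through a congruence by $(L^{\dagger})^{1/2}$, an eigenvalue comparison, and Weyl's theorem to arrive at $L^{\dagger}\succeq\sinc(\gamma)L^{\dagger}_{\sinc(\mathbf{x})}$, and then expands $\|\prj\diag(\sinc(\mathbf{x}))\mathbf{y}\|_2^2$ as a quadratic form in a nodal variable $\mathbf{w}$ with $\mathbf{y}=B^{\top}\mathbf{w}$, applying the two semidefinite inequalities in succession to get the bound $\sinc(\gamma)^2$ on the squared norm. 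You instead work entirely in the edge space: a single application of Cauchy--Schwarz combined with $\prj=\prj^{\top}$ and $\prj\mathbf{z}=\mathbf{z}$ collapses the problem to the scalar estimate $\mathbf{z}^{\top}\diag(\sinc(\mathbf{y}))\mathbf{z}\ge\sinc(\gamma)$. Both arguments exploit unweightedness in exactly the same (and only) place --- the orthogonality of the cutset projection from Theorem~\ref{thm:projection_property}\ref{p2:orthogonal} --- but yours avoids the pseudoinverse manipulations and the appeal to Weyl's theorem entirely, at the cost of not producing the intermediate inequality $L^{\dagger}\succeq\sinc(\gamma)L^{\dagger}_{\sinc(\mathbf{x})}$, which has no further use in the paper anyway. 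Part~\ref{p2:2-norm-new} is handled identically in both proofs, via $\sin(\gamma)=\gamma\sinc(\gamma)\le\gamma\,\alpha_2(\gamma)$ and Theorem~\ref{thm:existence+uniqueness_cond-new} with $p=2$. Your closing remarks (monotonicity of $\sinc$ on $[0,\tfrac{\pi}{2})$, and the observation that symmetry of $\prj$ is what makes the argument work) are accurate.
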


\begin{proof}
First note that $\diag(\sinc(\mathbf{x}))\succeq \sinc(\gamma)I_m$,
for every $\mathbf{x}\in D_2(\gamma)$. This implies that
\begin{align}\label{eq:ineq:L}
L_{\sinc(\mathbf{x})} = B \diag(\sinc(\mathbf{x})) B^{\top} \succeq
  \sinc(\gamma)L. 
\end{align}
Multiplying both sides of the inequality~\eqref{eq:ineq:L} by
$(L^{\dagger})^{\frac{1}{2}}$, we get
\begin{align*}
(L^{\dagger})^{\frac{1}{2}}L_{\sinc(\mathbf{x})}
  (L^{\dagger})^{\frac{1}{2}} \succeq \sinc(\gamma)(I_n - \tfrac{1}{n}\vect{1}_n\vect{1}_n^{\perp}). 
\end{align*}
Note that $\lambda_i(I_n - \tfrac{1}{n}\vect{1}_n\vect{1}_n^{\perp}) = 1$,
for every $i\in \{2,\ldots,n\}$. Thus, \cite[Corollary 7.7.4
  (c)]{RAH-CRJ:12} implies
\begin{align*}
\lambda_i((L^{\dagger})^{\frac{1}{2}}L_{\sinc(\mathbf{x})}
  (L^{\dagger})^{\frac{1}{2}}) \ge \sinc(\gamma), \qquad i\in \{2,\ldots,n\}.
\end{align*}
In turn this inequality implies
\begin{align*}
  \lambda_i(L^{\frac{1}{2}}L^{\dagger}_{\sinc(\mathbf{x})}L^{\frac{1}{2}}) \le \tfrac{1}{\sinc(\gamma)}, \qquad i\in \{2,\ldots,n\}.
\end{align*}
Now, Weyl's Theorem~\cite[Theorem 4.3.1]{RAH-CRJ:12} implies
\begin{align*}
\tfrac{1}{\sinc(\gamma)}(I_n - \tfrac{1}{n}\vect{1}_n\vect{1}_n^{\perp}) \succeq L^{\frac{1}{2}}L^{\dagger}_{\sinc(\mathbf{x})}L^{\frac{1}{2}} ,
\end{align*}
so that, for every $\mathbf{x}\in D_2(\gamma)$,
\begin{align}\label{eq:ineq:pinvL}
  L^{\dagger} \succeq \sinc(\gamma)L^{\dagger}_{\sinc(\mathbf{x})}.
\end{align}
Regarding part~\ref{p1:2-norm-maf}, note that $\prj = B^{\top}L^{\dagger}B$
is an idempotent symmetric matrix. Thus, by setting $\mathbf{y} =
B^{\top}\mathbf{w}$, we have
\begin{multline*}
\|\prj\diag(\sinc(\mathbf{x}))\mathbf{y}\|^2_2\\= \mathbf{w}^{\top}B \diag(\sinc(\mathbf{x})) B^{\top}
  L^{\dagger} B \diag(\sinc(\mathbf{x})) B^{\top} \mathbf{w}.
\end{multline*}
Therefore, using~\eqref{eq:ineq:L} and~\eqref{eq:ineq:pinvL}, we get 
\begin{align*}
\|\prj&\diag(\sinc(\mathbf{x}))\mathbf{y}\|^2_2\\ &= \mathbf{w}^{\top}B \diag(\sinc(\mathbf{x})) B^{\top}
  L^{\dagger} B \diag(\sinc(\mathbf{x})) B^{\top} \mathbf{w} \\ &\ge
  \sinc(\gamma) \mathbf{w}^{\top}B \diag(\sinc(\mathbf{x})) B^{\top}
  \mathbf{w} \\ & \ge \sinc(\gamma)^2 \mathbf{w}^{\top}BB^{\top}\mathbf{w}
  = \sinc(\gamma)^2 \mathbf{y}^{\top}\mathbf{y} = \sinc(\gamma)^2.
\end{align*}
This completes the proof of part~\ref{p1:2-norm-maf}.  Regarding
part~\ref{p2:2-norm-new}, if the assumption~\eqref{test:2-norm-new} holds,
then
\begin{align*}
  \|B^{\top}L^{\dagger}\omega\|_{2}\le
  \sin(\gamma)=\sinc(\gamma)\gamma\le \alpha_2(\gamma)\gamma.
\end{align*}
The result follows by using the test~\eqref{test:p-norm-new} for $p=2$. 
\end{proof}



It is now convenient to introduce the smooth function
$\map{g}{[1,\infty)}{\real}$ defined by
  \begin{multline}\label{eq:g-function}
    g(x)= \frac{y(x)+\sin(y(x))}{2} \\ - x \frac{y(x)-\sin(y(x))}{2}
    \,\Big|_{y(x) = \arccos(\frac{x-1}{x+1})},
  \end{multline}
One can  verify that $g(1)=1$, $g$ is monotonically decreasing,
  and $\lim_{x\to\infty} g(x)=0$; the graph of $g$ is shown in
  Figure~\ref{fig:function-g}.

\begin{figure}[!htb]\centering
  \includegraphics[width=.75\linewidth]{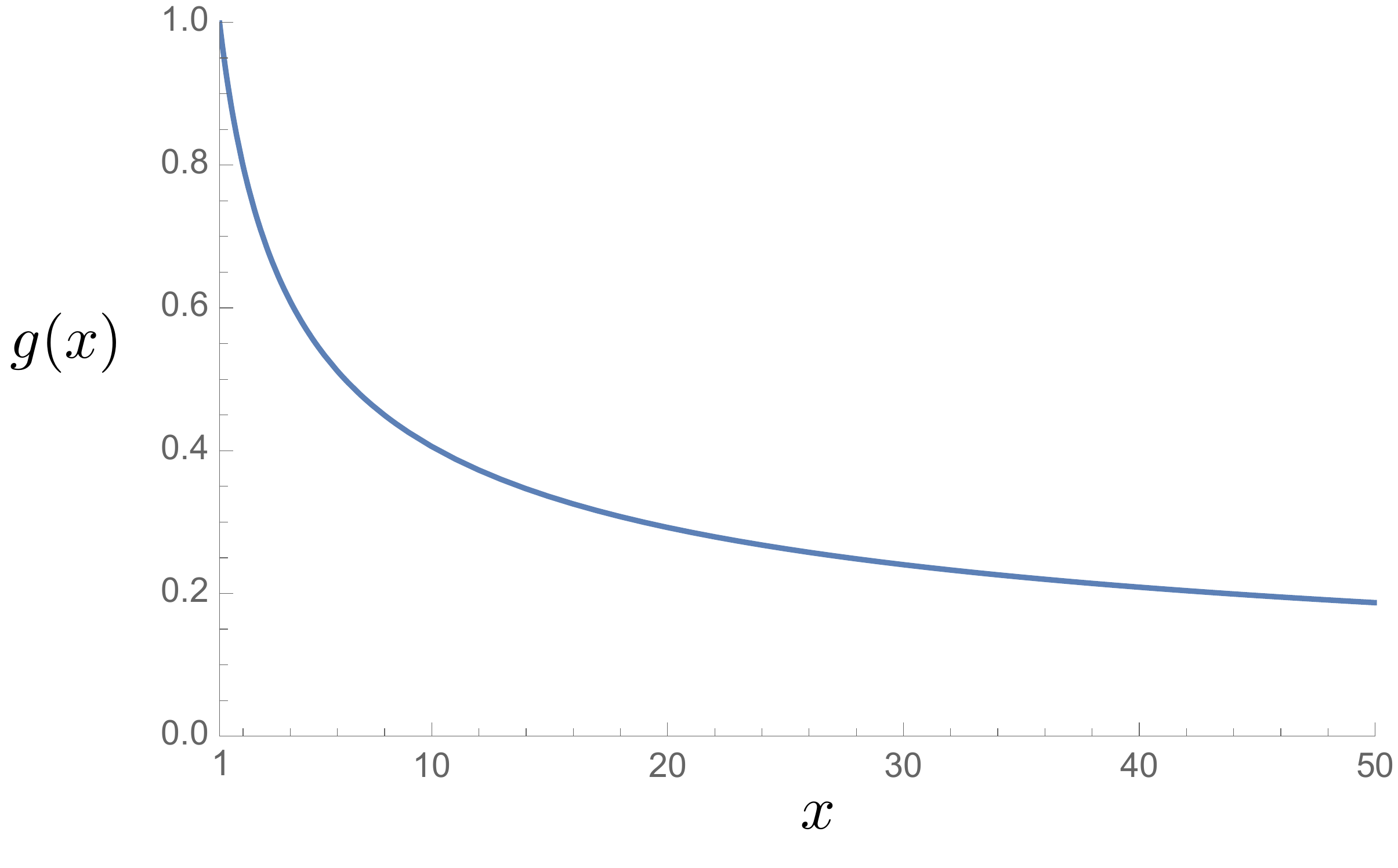}
  \caption{The graph of the monotonically-decreasing function $g$}
  \label{fig:function-g}
\end{figure}

\begin{theorem}[\textbf{Sufficient conditions for synchronization
      based on general lower bound}]\label{thm:p-norm-computable}
  Consider the Kuramoto model~\eqref{eq:kuramoto_model} with
  undirected weighted connected graph
  $G$, the incidence matrix $B$, the weight matrix $\mathcal{A}$, the
  cutset projection matrix $\prj$, and frequencies $\omega\in
  \vect{1}_n^{\perp}$. For
  $p\in[1,\infty)\union\{\infty\}$, define the angle
    \begin{align}
      \label{def:gammapstar}
   \gamma_p^*=\arccos\left(\frac{\|\prj\|_p-1}{\|\prj\|_p+1}\right) \in
    [0,\tfrac{\pi}{2}). 
   \end{align}
   Then the following statements hold:
   \begin{enumerate}
   \item\label{p1:p-norm_maf} for every $\gamma\in [0,\frac{\pi}{2})$,
     we have
   \begin{align*}
     \alpha_p(\gamma) \ge
     \left(\frac{1+\sinc(\gamma)}{2}\right)-\|\prj\|_{p}\left(\frac{1-\sinc(\gamma)}{2}\right);
   \end{align*}
   \item\label{p2:p-norm-computable} if the following condition holds:
   \begin{align}\label{test:p-norm-computable}
   \tag{T3}\|B^{\top}L^{\dagger}\omega\|_{p}\le g(\|\prj\|_p), 
   \end{align}
   then there exists a unique locally exponentially stable
   synchronization manifold $\mathbf{x}^*$ for the Kuramoto
   model~\eqref{eq:kuramoto_model} in the domain $S^{G}(\gamma_p^*)$.
   \end{enumerate}
\end{theorem}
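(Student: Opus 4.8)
The plan is to establish the scalar lower bound in part~\ref{p1:p-norm_maf} by the same sub-multiplicativity argument used throughout, and then obtain part~\ref{p2:p-norm-computable} by optimizing the resulting sufficient condition over $\gamma$. First I would fix $p$, fix $\gamma\in[0,\tfrac{\pi}{2})$, and pick $\mathbf{y}\in D_p(\gamma)$ and $\mathbf{z}\in\Img(B^{\top})$ with $\|\mathbf{z}\|_p=1$. The key observation is that for any scalar $s\in[\sinc(\gamma),1]$ one can write $\diag(\sinc(\mathbf{y})) = \tfrac{1+\sinc(\gamma)}{2}I_m + R$, where $R=\diag(\sinc(\mathbf{y})) - \tfrac{1+\sinc(\gamma)}{2}I_m$ has every diagonal entry in $[\tfrac{\sinc(\gamma)-1}{2},\tfrac{1-\sinc(\gamma)}{2}]$, so $\|R\|_p\le\tfrac{1-\sinc(\gamma)}{2}$ (a diagonal matrix with entries bounded by $c$ in absolute value has $p$-norm at most $c$, for every $p$). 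Then, using $\prj\mathbf{z}=\mathbf{z}$ since $\mathbf{z}\in\Img(B^{\top})$ and $\prj$ is idempotent onto $\Img(B^{\top})$,
\begin{align*}
\|\prj\diag(\sinc(\mathbf{y}))\mathbf{z}\|_p
&= \Big\|\tfrac{1+\sinc(\gamma)}{2}\prj\mathbf{z} + \prj R\mathbf{z}\Big\|_p\\
&\ge \tfrac{1+\sinc(\gamma)}{2}\|\mathbf{z}\|_p - \|\prj\|_p\|R\|_p\|\mathbf{z}\|_p\\
&\ge \tfrac{1+\sinc(\gamma)}{2} - \|\prj\|_p\,\tfrac{1-\sinc(\gamma)}{2}.
\end{align*}
Taking the minimum over admissible $\mathbf{y},\mathbf{z}$ gives part~\ref{p1:p-norm_maf}. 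The one subtlety to check is that $\sinc$ is positive and at most $1$ on $[-\gamma,\gamma]\subset(-\tfrac{\pi}{2},\tfrac{\pi}{2})$, so that $\sinc(\gamma)=\min_{|t|\le\gamma}\sinc(t)$ and the diagonal entries of $\diag(\sinc(\mathbf{y}))$ indeed lie in $[\sinc(\gamma),1]$; this uses only that $\sinc$ is even and decreasing on $[0,\pi)$.

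For part~\ref{p2:p-norm-computable}, I would combine part~\ref{p1:p-norm_maf} with the general test~\eqref{test:p-norm-new}: a synchronization manifold in $S^G(\gamma)$ exists provided $\|B^{\top}L^{\dagger}\omega\|_p\le\alpha_p(\gamma)\gamma$, and hence it suffices to have
\begin{align*}
\|B^{\top}L^{\dagger}\omega\|_p \le \Big(\tfrac{1+\sinc(\gamma)}{2} - \|\prj\|_p\tfrac{1-\sinc(\gamma)}{2}\Big)\gamma
= \tfrac{\gamma+\sin\gamma}{2} - \|\prj\|_p\tfrac{\gamma-\sin\gamma}{2}.
\end{align*}
To get the sharpest such condition I would maximize the right-hand side, call it $\Phi(\gamma)$, over $\gamma\in[0,\tfrac{\pi}{2})$. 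Differentiating, $\Phi'(\gamma)=\tfrac{1+\cos\gamma}{2}-\|\prj\|_p\tfrac{1-\cos\gamma}{2}$, which vanishes exactly when $\cos\gamma=\tfrac{\|\prj\|_p-1}{\|\prj\|_p+1}$, i.e.\ at $\gamma=\gamma_p^*$ as defined in~\eqref{def:gammapstar}; one checks $\Phi'>0$ for $\gamma<\gamma_p^*$ and $\Phi'<0$ for $\gamma>\gamma_p^*$, so $\gamma_p^*$ is the unique maximizer. Substituting $\gamma=\gamma_p^*$ and $y(\|\prj\|_p)=\gamma_p^*$ into $\Phi$ reproduces precisely the function $g(\|\prj\|_p)$ from~\eqref{eq:g-function}. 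Thus condition~\eqref{test:p-norm-computable} implies the hypothesis of Theorem~\ref{thm:existence+uniqueness_cond-new} at $\gamma=\gamma_p^*$, which yields the unique locally exponentially stable synchronization manifold in $S^G(\gamma_p^*)$.

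The routine but slightly delicate bookkeeping steps are: verifying $\gamma_p^*\in[0,\tfrac{\pi}{2})$, which follows since $\|\prj\|_p\ge1$ forces the argument of $\arccos$ into $[0,1)$ (equality $\|\prj\|_p=1$ giving $\gamma_p^*=\tfrac{\pi}{2}$ — here one should note the interval is half-open and confirm the argument still goes through, or that $\|\prj\|_p>1$ in the nontrivial cyclic case); and confirming that the algebraic identity $\Phi(\gamma_p^*)=g(\|\prj\|_p)$ matches the formula~\eqref{eq:g-function} term by term, including the claimed properties $g(1)=1$ and $g$ decreasing to $0$. I do not expect a genuine obstacle here: the main content is the single projection-norm inequality in part~\ref{p1:p-norm_maf}, and everything else is a one-variable optimization. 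The only place to be careful is ensuring the bound in part~\ref{p1:p-norm_maf} is nonnegative and hence useful — it is positive precisely when $\gamma<\gamma_p^*$ and zero at $\gamma_p^*$ — so that the argument is applied at (or just below) $\gamma_p^*$ where $\alpha_p(\gamma_p^*)\gamma_p^*$ is maximized.
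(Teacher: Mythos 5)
Your proposal is correct and follows essentially the same route as the paper: your splitting $\diag(\sinc(\mathbf{y}))=\tfrac{1+\sinc(\gamma)}{2}I_m+R$ is exactly the paper's triangle-inequality step with the choice $\mathbf{w}=\tfrac{1+\sinc(\gamma)}{2}\vect{1}_m$, and your one-variable maximization of $\Phi$ is the paper's analysis of its function $\ell$, yielding the same critical angle $\gamma_p^*$ and the same value $g(\|\prj\|_p)$. One small inaccuracy in your closing remark: the lower bound of part~\ref{p1:p-norm_maf} vanishes where $\sinc(\gamma)=\tfrac{\|\prj\|_p-1}{\|\prj\|_p+1}$, which occurs at an angle strictly larger than $\gamma_p^*$ (where $\cos$, not $\sinc$, takes that value), so the bound is in fact still positive at $\gamma_p^*$ — a slip that only works in your favor and does not affect the argument.
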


\begin{proof}
Regarding part~\ref{p1:p-norm_maf}, let $\mathbf{w}\in \real^m$.  The triangle inequality implies that,
for every $\mathbf{y}\in D_{p}(\gamma)$ and every $\mathbf{z}\in
\Img(B^{\top})$ with $\|\mathbf{z}\|_p=1$,
\begin{multline}\label{eq:triangular_inequality}
\left\|\prj \diag(\sinc(\mathbf{y}))\mathbf{z}\right\|_{p}\\ \ge
  \left\|\prj
  \diag(\mathbf{w})\mathbf{z}\right\|_{p}-\left\|\prj
  \diag(\sinc(\mathbf{y})-\mathbf{w})\mathbf{z}\right\|_{p}. 
\end{multline}
Using triangle inequality, the last term
  in the inequality~\eqref{eq:triangular_inequality} can be upper bounded as
\begin{align*}
\left\|\prj \diag(\sinc(\mathbf{y})-\mathbf{w})\mathbf{z}\right\|_{p} \le \|\prj\|_{p}\|\diag(\sinc(\mathbf{y})-\mathbf{w})\|_{p},
\end{align*}
Moreover, the matrix $\diag(\sinc(\mathbf{y})-\mathbf{w})$ is diagonal
and by~\cite[Theorem 5.6.36]{RAH-CRJ:12}, we have
\begin{align*}
\|\diag(\sinc(\mathbf{y})-\mathbf{w})\|_p = \|\sinc(\mathbf{y})-\mathbf{w}\|_{\infty}.
\end{align*} 
Therefore, the inequality~\eqref{eq:triangular_inequality} can be rewritten as
\begin{multline*}
  \left\|\prj \diag(\sinc(\mathbf{y}))\mathbf{z}\right\|_{p}\\\ge
  \left\|\prj
  \diag(\mathbf{w})\mathbf{z}\right\|_{p}-\|\prj\|_{p}\|\sinc(\mathbf{y})-\mathbf{w}\|_{\infty}. 
\end{multline*}
By setting $\mathbf{w}=\left(\frac{1+\sinc(\gamma)}{2}\right)\vect{1}_m$, we have
\begin{align*}
\|\prj\diag(\mathbf{w})\mathbf{z}\|_p=\left(\frac{1+\sinc(\gamma)}{2}\right)\|\prj\mathbf{z}\|_p=\frac{1+\sinc(\gamma)}{2}.
\end{align*}
In turn $\|\sinc(\mathbf{y})-\mathbf{w}\|_{\infty}\le
\frac{1-\sinc(\gamma)}{2}$, and we get
\begin{multline*} 
  \left\|\prj \diag(\sinc(\mathbf{y}))\mathbf{z}\right\|_{p}\\
  \ge
  \left(\frac{1+\sinc(\gamma)}{2}\right)-\|\prj\|_{p}\left(\frac{1-\sinc(\gamma)}{2}\right). 
\end{multline*}
Part~\ref{p1:p-norm_maf} of the theorem simply follows by taking the
minimum over $\mathbf{y}\in D_p(\gamma)$ and $\mathbf{z}\in
\Img(B^{\top})$ such that $\|\mathbf{z}\|_{p}=1$.

Regarding part~\ref{p2:p-norm-computable}, note that, by
part~\ref{p1:p-norm_maf}, we have
\begin{align*}
\alpha_p(\gamma)\ge \left(\frac{\gamma+\sin(\gamma)}{2}\right)-\|\prj\|_p\left(\frac{\gamma-\sin(\gamma)}{2}\right).
\end{align*}
Define the function $\map{\ell}{[0,\tfrac{\pi}{2})}{\real}$ by
\begin{align*}
\ell(\gamma)=\left(\frac{\gamma+\sin(\gamma)}{2}\right)-\|\prj\|_p\left(\frac{\gamma-\sin(\gamma)}{2}\right).
\end{align*}
Then one can compute:
\begin{align}
\frac{d \ell}{d\gamma}(\gamma)&=\left(\frac{1+\cos(\gamma)}{2}\right)-\|\prj\|_p\left(\frac{1-\cos(\gamma)}{2}\right),\label{eq:first_der_f}\\
\frac{d^2 \ell}{d\gamma^2}(\gamma)&=\left(\frac{-\sin(\gamma)}{2}\right)-\|\prj\|_p\left(\frac{\sin(\gamma)}{2}\right).\label{eq:second_der_f}
\end{align} 
Using the equation~\eqref{eq:first_der_f}, one can check that the unique critical point of $\ell$ in the interval
$[0,\tfrac{\pi}{2})$ is the solution $\gamma_p^*$ to
\begin{align*}
\left(\frac{1+\cos(\gamma^*_p)}{2}\right)-\|\prj\|_p\left(\frac{1-\cos(\gamma^*_p)}{2}\right)=0.
\end{align*}
This implies that $\gamma_p^*\in [0,\frac{\pi}{2})$ is given as in
equation~\eqref{def:gammapstar}.  Moreover, we have
\begin{align*}
\frac{d^2}{d\gamma^2}\ell(\gamma^*_p)=\left(\frac{-\sin(\gamma^*_p)}{2}\right)-\|\prj\|_p\left(\frac{\sin(\gamma^*_p)}{2}\right)<0,
\end{align*}
so that $\gamma=\gamma_p^*$ is a local maximum for~$\ell$. Now by using test~\eqref{test:p-norm-new}, if the following condition holds:
\begin{align*}
\|B^{\top}L^{\dagger}\omega\|_{p}\le \alpha_p(\gamma^*_p)\gamma^*_p,
\end{align*}
then there exists a unique locally exponentially stable
synchronization manifold $\mathbf{x}^*$ for the Kuramoto
model~\eqref{eq:kuramoto_model} in the domain $S^{G}(\gamma^*_p)$. Using
the lower bound for $\alpha_p(\gamma)$ given in
part~\ref{p1:p-norm_maf}, it is easy to see that if the following
condition holds:
\begin{align*}
\|B^{\top}L^{\dagger}\omega\|_{p}& \le
  \left(\frac{\gamma_p^*+\sin(\gamma_p^*)}{2}\right)-\|\prj\|_p\left(\frac{\gamma_p^*-\sin(\gamma_p^*)}{2}\right)\\
  & =g(\|\prj\|_p), 
\end{align*}
then there exists a unique locally exponentially stable
synchronization manifold $\mathbf{x}^*$ for the Kuramoto
model~\eqref{eq:kuramoto_model} in the domain $S^{G}(\gamma^*_p)$. This completes the proof of the theorem.
\end{proof}

\begin{remark}[Comparison of test~\eqref{test:2-norm-new}
  and test~\eqref{test:p-norm-computable}]
\begin{enumerate}
\item The tests~\eqref{test:2-norm-new}
  and~\eqref{test:p-norm-computable} have different domain of
  applicability. For every $\gamma\in [0,\frac{\pi}{2})$, the
  test~\eqref{test:2-norm-new} presents a sufficient condition for
  synchronization of the Kuramoto model~\eqref{eq:kuramoto_model} in
  $S^{G}(\gamma)$. Instead, the test~\eqref{test:p-norm-computable} is
  applicable only to the specific domain $S^{G}(\gamma^*_p)$ for
  $\gamma^*_p=\arccos\left(\frac{\|\prj\|_p-1}{\|\prj\|_p+1}\right)\in
  [0,\tfrac{\pi}{2})$.
  
 \item For unweighted graphs with $\gamma=\frac{\pi}{2}$, one can
   recover test~\eqref{test:2-norm-new} from
   test~\eqref{test:p-norm-computable}. Specifically, for unweighted
   graphs, Theorem~\ref{thm:projection_property}\ref{p2:orthogonal}
   implies that $\gamma^*_2=\arccos(0)=\frac{\pi}{2}$.  Thus, for
   unweighted graphs, test~\eqref{test:p-norm-computable} with $p=2$
   is $\|B^{\top}L^{\dagger}\omega\|_2\le
   g(1)=\sin(\tfrac{\pi}{2})=1$.
  \end{enumerate}
  \end{remark}


\subsection{Comparison with previously-known synchronization results}\label{sec:comparison}

We now compare the new synchronization tests~\eqref{test:2-norm-new}
and~\eqref{test:p-norm-computable} with those existing in the
literature.

  \subsubsection{General topology ($2$-norm synchronization conditions)}

    To the best of our knowledge, sufficient
    conditions for synchronization of networks of oscillators with general topology
    was first studied in the paper~\cite{AJ-NM-MB:04}. Using
    the analysis methods introduced in~\cite{AJ-NM-MB:04}, the tightest sufficient condition for synchronization of networks of
    oscillators with general topology~\cite[Theorem 4.7]{FD-FB:12i} can be obtained by the following test:
    \begin{align}\label{test:best-known}
      \tag{T0}\left\|B^{\top}\omega\right\|_2< \lambda_2(L),
    \end{align}
    where $\lambda_2(L)$ is the Fiedler eigenvalue of the Laplacian $L$
    (see the survey~\cite{FD-FB:13b} for more discussion). One can show
    that, for unweighted graphs test~\eqref{test:2-norm-new} gives a sharper sufficient condition
    than test~\eqref{test:best-known}.  This fact is a consequence of the
    following lemma, whose proof is given in Appendix~\ref{app:inequality}.
    \begin{lemma}\label{lem:inequality} Let $G$ be a
        connected, undirected, weighted graph with the incidence matrix $B$
        and weight matrix $\mathcal{A}$. Assume that $L$ is the Laplacian
        of $G$ with eigenvalues
        $0=\lambda_1(L)<\lambda_2(L)\le\cdots\le\lambda_n(L)$. Then the
        following statements hold:
   \begin{enumerate}
   \item\label{p1:inequality} each $\omega\in \vect{1}_n^{\perp}$satisfies the inequality
     \begin{align}\label{eq:inequality}
       \left\| B^{\top}L^{\dagger}\omega \right\|_2\le
       \frac{1}{\lambda_2(L)}\left\|B^{\top}\omega \right\|_2,
     \end{align}
     with the equality sign if and only if $\omega$ belongs to the
     eigenspace associated to $\lambda_2(L)$; and
   \item\label{p2:T0_T2} if $\omega\in \vect{1}_n^{\perp}$ satisfies
     test~\eqref{test:best-known}, then it satisfies
     test~\eqref{test:2-norm-new}.
   \end{enumerate}
   \end{lemma}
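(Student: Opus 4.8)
The plan is to establish part~\ref{p1:inequality} and then read off part~\ref{p2:T0_T2} as an immediate corollary. The one structural observation I would use for part~\ref{p1:inequality} is that the Euclidean norm of $B^{\top}x$ is controlled by a Laplacian quadratic form: $\|B^{\top}x\|_2^2 = x^{\top}BB^{\top}x$, which for an unweighted graph ($\mathcal{A}=I_m$, the setting of test~\eqref{test:2-norm-new}) equals $x^{\top}Lx$. Using this together with $L(L^{\dagger}\omega)=\omega$ for $\omega\in\vect{1}_n^{\perp}=\Img(L)$, both sides of~\eqref{eq:inequality} turn into quadratic forms in $\omega$: $\|B^{\top}\omega\|_2^2 = \omega^{\top}L\omega$ and $\|B^{\top}L^{\dagger}\omega\|_2^2 = (L^{\dagger}\omega)^{\top}L(L^{\dagger}\omega) = \omega^{\top}L^{\dagger}\omega$.

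So it remains to prove $\omega^{\top}L^{\dagger}\omega \le \lambda_2(L)^{-2}\,\omega^{\top}L\omega$ for every $\omega\in\vect{1}_n^{\perp}$. I would deduce this from two spectral facts: writing $\Pi = I_n-\tfrac{1}{n}\vect{1}_n\vect{1}_n^{\top}$ for the orthogonal projection onto $\vect{1}_n^{\perp}$, the restriction of $L$ to $\vect{1}_n^{\perp}$ is symmetric positive definite with smallest eigenvalue $\lambda_2(L)$, hence $L\succeq\lambda_2(L)\,\Pi$ and $L^{\dagger}\preceq\lambda_2(L)^{-1}\Pi$. For $\omega\in\vect{1}_n^{\perp}$ (so $\Pi\omega=\omega$) these yield $\omega^{\top}L^{\dagger}\omega \le \lambda_2(L)^{-1}\|\omega\|_2^2$ and $\|\omega\|_2^2 \le \lambda_2(L)^{-1}\omega^{\top}L\omega$; chaining the two gives exactly~\eqref{eq:inequality}. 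Equality in either step forces (by the equality case of the Rayleigh bound) that $\omega$ lies in the eigenspace of $L$ associated with $\lambda_2(L)$, and conversely any such $\omega$ satisfies $L\omega=\lambda_2(L)\omega$ and $L^{\dagger}\omega=\lambda_2(L)^{-1}\omega$, making both steps equalities; this is the claimed characterization of the equality case.

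Part~\ref{p2:T0_T2} then costs nothing more: if $\omega\in\vect{1}_n^{\perp}$ satisfies~\eqref{test:best-known}, i.e.\ $\|B^{\top}\omega\|_2<\lambda_2(L)$, then part~\ref{p1:inequality} gives $\|B^{\top}L^{\dagger}\omega\|_2\le\lambda_2(L)^{-1}\|B^{\top}\omega\|_2<1$. Since $\sin$ restricts to a continuous increasing bijection of $[0,\tfrac{\pi}{2})$ onto $[0,1)$, taking $\gamma=\arcsin\bigl(\|B^{\top}L^{\dagger}\omega\|_2\bigr)\in[0,\tfrac{\pi}{2})$ gives $\|B^{\top}L^{\dagger}\omega\|_2\le\sin\gamma$, which is precisely~\eqref{test:2-norm-new}.

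Organized this way the argument is essentially routine, and I do not expect a serious obstacle. The two points that genuinely require attention are the reduction of both norms to Laplacian quadratic forms — i.e.\ the identity $\|B^{\top}x\|_2^2=x^{\top}Lx$, which is the place where the unweighted hypothesis enters — and the bookkeeping in the equality case when $\lambda_2(L)$ is a repeated eigenvalue, for which it is cleanest to reason with the whole $\lambda_2(L)$-eigenspace rather than a single eigenvector.
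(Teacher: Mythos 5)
Your argument is correct for unweighted graphs, and there it takes a genuinely different and more elementary route than the paper's. The paper proves part~\ref{p1:inequality} by writing $\omega=B\mathbf{y}$, inserting the eigendecomposition $L=U\diag(0,\lambda_2,\dots,\lambda_n)U^{\top}$, and comparing the squared matrices $(B^{\top}L^{\dagger}B)^2$ and $\lambda_2^{-2}(B^{\top}B)^2$ in the Loewner order; you instead collapse both sides of~\eqref{eq:inequality} to the scalar quadratic forms $\omega^{\top}L^{\dagger}\omega$ and $\omega^{\top}L\omega$ via $\|B^{\top}x\|_2^2=x^{\top}Lx$ and finish with two Rayleigh bounds. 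This is shorter, and it makes the equality case transparent, whereas the paper needs an auxiliary diagonal matrix and a kernel computation for it. Part~\ref{p2:T0_T2} is handled identically in both.

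The point that needs attention is the hypothesis of part~\ref{p1:inequality}. The lemma is stated for weighted graphs, and your proof covers only $\mathcal{A}=I_m$, since that is exactly where $\|B^{\top}x\|_2^2=x^{\top}Lx$ holds; you acknowledge this, but only in passing. Be aware that this is not a restriction you could remove with more work: for general weights, inequality~\eqref{eq:inequality} is false. On the three-node path with $a_{12}=1$ and $a_{23}=2$ one has $\lambda_2(L)=3-\sqrt{3}$, and for $\omega=-u_2+\varepsilon u_3$ with $\varepsilon>0$ small (where $u_2,u_3$ are eigenvectors of $L$ for $\lambda_2,\lambda_3$) a direct computation gives $\|B^{\top}L^{\dagger}\omega\|_2>\lambda_2(L)^{-1}\|B^{\top}\omega\|_2$; the obstruction is the cross term $u_2^{\top}BB^{\top}u_3\neq 0$, which cannot occur when $BB^{\top}=L$. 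Correspondingly, the paper's own proof of the weighted case breaks at the step that passes from $0\preceq A\preceq C$ to $A^2\preceq C^2$, which is not a valid Loewner-order implication. So your restriction to unweighted graphs is in fact the correct scope for the lemma --- it is the only case in which test~\eqref{test:2-norm-new} is stated and the only case used in the comparison with test~\eqref{test:best-known} --- but it should be promoted from a parenthetical remark to an explicit hypothesis.
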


  
\subsubsection{General topology ($\infty$-norm synchronization conditions)}
  The approximate test $\|B^{\top}L^{\dagger}\omega\|_{\infty}\le 1$
  was proposed in~\cite{FD-MC-FB:11v-pnas} as an approximately-correct
  sufficient condition for synchronization; statistical evidence on
  random graphs and IEEE test cases shows that the condition has much
  predictive power. However, \cite{FD-MC-FB:11v-pnas} also identifies
  a family of counterexamples, where the condition is shown to be
  incorrect. Our test~\eqref{test:p-norm-computable} with $p=\infty$
  is a rigorous, more conservative, and generically-applicable version
  of that approximately-correct test.

\subsubsection{Acyclic topology}

Consider the Kuramoto model~\eqref{eq:kuramoto_model} with acyclic connected graph
$G$ and $\omega\in \vect{1}_n^{\perp}$. Then the existence and uniqueness of synchronization
manifolds of~\eqref{eq:kuramoto_model} in $S^{G}(\gamma)$ can be completely characterized by the
following test~\cite[Corollary 7.5]{FD-FB:13b}: 
\begin{align}\label{eq:acyclic}
\|B^{\top}L^{\dagger}\omega\|_{\infty}\le \sin(\gamma). 
\end{align}
We show that this characterization can be obtained from
the general test~\eqref{test:p-norm-new} for $p=\infty$.

\begin{corollary}[\textbf{Synchronization for acyclic graphs}]\label{cor:acyclic}
Consider the Kuramoto model~\eqref{eq:kuramoto_model} with the acyclic undirected weighted connected
graph $G$, the incidence matrix $B$, the weight matrix $A$, and $\omega\in \vect{1}_n^{\perp}$. Pick
$\gamma\in [0,\tfrac{\pi}{2})$. Then the following conditions are
  equivalent:
\begin{enumerate}
\item\label{p1:nec_suf_condition_acyclic} $\left\|B^{\top}L^{\dagger}\omega\right\|_{\infty}\le \sin(\gamma)$,
\item\label{p2:state_existence_uniqueness_acyclic} there exists a
  unique locally exponentially stable synchronization manifold for the
  Kuramoto model~\eqref{eq:kuramoto_model} in $S^{G}(\gamma)$.
\end{enumerate}
Additionally, if either of the above equivalent conditions holds, then
the unique synchronization manifold in $S^{G}(\gamma)$ is
$L^{\dagger}B\mathcal{A}\arcsin(B^{\top}L^{\dagger}\omega)$.
\end{corollary}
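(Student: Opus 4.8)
\medskip
\noindent\textbf{Proof proposal.}
The plan is to specialize the general theory of Section~\ref{sec:synchronization_conditions} to the acyclic case, where the cutset projection is trivial and the inequalities used earlier collapse to equalities. First I would record the structural simplifications: by Theorem~\ref{thm:projection_property}\ref{p3:acyclic} we have $\prj = I_m$ and $\|\prj\|_\infty = 1$, so the Kuramoto map~\eqref{def:Kuramoto-map} reduces to $\fK(\mathbf{x}) = \sin(B^\top\mathbf{x})$, the edge balance equation~\eqref{eq:synchronization_manifold-edges} becomes $\sin(B^\top\mathbf{x}) = B^\top L^\dagger\omega$, and substituting $\|\prj\|_\infty = 1$ into Theorem~\ref{thm:p-norm-computable}\ref{p1:p-norm_maf} gives $\alpha_\infty(\gamma) \ge \sinc(\gamma)$ for every $\gamma \in [0,\tfrac{\pi}{2})$.

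For \ref{p1:nec_suf_condition_acyclic} $\Rightarrow$ \ref{p2:state_existence_uniqueness_acyclic}, the quickest route is to note that $\|B^\top L^\dagger\omega\|_\infty \le \sin\gamma = \sinc(\gamma)\gamma \le \alpha_\infty(\gamma)\gamma$, so test~\eqref{test:p-norm-new} holds with $p = \infty$ and Theorem~\ref{thm:existence+uniqueness_cond-new} applies. To also pin down the explicit formula in the ``Additionally'' clause, I would instead construct the manifold by hand: since the entries of $B^\top L^\dagger\omega$ lie in $[-\sin\gamma,\sin\gamma]\subset(-1,1)$, the vector $\mathbf{x}^\sharp := L^\dagger B\mathcal{A}\arcsin(B^\top L^\dagger\omega)$ is well defined, lies in $\Img(L^\dagger) = \vect{1}_n^\perp$, and satisfies $B^\top\mathbf{x}^\sharp = \prj\arcsin(B^\top L^\dagger\omega) = \arcsin(B^\top L^\dagger\omega)$ because $\prj = I_m$; hence $\|B^\top\mathbf{x}^\sharp\|_\infty \le \arcsin(\sin\gamma) = \gamma$, so $\mathbf{x}^\sharp \in S^G(\gamma)$, and $\fK(\mathbf{x}^\sharp) = \sin(B^\top\mathbf{x}^\sharp) = B^\top L^\dagger\omega$. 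By Theorem~\ref{thm:existence_uniqueness_S}\ref{p4:equivalence} this makes $\mathbf{x}^\sharp$ a synchronization manifold, and by part~\ref{p5:existence_uniqueness_S} it is the unique one in $S^G(\gamma)$ and is locally exponentially stable; this simultaneously gives \ref{p2:state_existence_uniqueness_acyclic} and the closed-form expression.

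For the converse \ref{p2:state_existence_uniqueness_acyclic} $\Rightarrow$ \ref{p1:nec_suf_condition_acyclic}, let $\mathbf{x}^*\in S^G(\gamma)$ be a synchronization manifold. By Theorem~\ref{thm:existence_uniqueness_S}\ref{p4:equivalence} it solves $\sin(B^\top\mathbf{x}^*) = B^\top L^\dagger\omega$, and membership in $S^G(\gamma)$ forces $\|B^\top\mathbf{x}^*\|_\infty \le \gamma < \tfrac{\pi}{2}$; since $\sin$ is increasing on $[0,\tfrac{\pi}{2}]$, every entry of $\sin(B^\top\mathbf{x}^*)$ has absolute value at most $\sin\gamma$, so taking $\infty$-norms gives $\|B^\top L^\dagger\omega\|_\infty \le \sin\gamma$, which is \ref{p1:nec_suf_condition_acyclic}. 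The formula in this case follows either by inverting $\sin$ componentwise — legitimate because the entries of $B^\top\mathbf{x}^*$ lie in $(-\tfrac{\pi}{2},\tfrac{\pi}{2})$ — together with the injectivity of $B^\top$ on $\vect{1}_n^\perp$, or, more cheaply, from the uniqueness in Theorem~\ref{thm:existence_uniqueness_S}\ref{p5:existence_uniqueness_S} combined with the $\mathbf{x}^\sharp$ produced above.

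I do not expect a genuine obstacle here: the acyclic case is precisely where the Brouwer-type fixed-point argument behind Theorem~\ref{thm:existence+uniqueness_cond-new} degenerates into an explicit inversion. The one delicate point to get right is the componentwise use of $\arcsin$, which is valid because the strict inequality $\gamma < \tfrac{\pi}{2}$ keeps all relevant angles inside the principal branch $(-\tfrac{\pi}{2},\tfrac{\pi}{2})$, on which $\arcsin\circ\sin$ is the identity and $\sin$ is strictly increasing; the same strictness is what yields $\arcsin(\sin\gamma) = \gamma$ and the clean identification of $S^G(\gamma)$ with $\bperp$.
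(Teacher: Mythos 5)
Your proposal is correct, and the converse direction \ref{p2:state_existence_uniqueness_acyclic} $\Rightarrow$ \ref{p1:nec_suf_condition_acyclic} is essentially identical to the paper's. The forward direction, however, is organized differently. The paper establishes existence by computing $\alpha_\infty(\gamma)=\sinc(\gamma)$ \emph{exactly} (via the elementary bound $\|\diag(\sinc(\mathbf{y}))\mathbf{z}\|_\infty\ge\sinc(\gamma)\|\mathbf{z}\|_\infty$, exploiting $\prj=I_m$) and then invoking the Brouwer-based test~\eqref{test:p-norm-new}; only afterwards does it extract the closed-form expression by inverting the edge balance equation $\sin(B^\top\mathbf{x}^*)=B^\top L^\dagger\omega$ of the solution already known to exist. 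You instead exhibit the candidate $\mathbf{x}^\sharp=L^\dagger B\mathcal{A}\arcsin(B^\top L^\dagger\omega)$ up front, check that $B^\top\mathbf{x}^\sharp=\prj\arcsin(B^\top L^\dagger\omega)=\arcsin(B^\top L^\dagger\omega)$ places it in $S^G(\gamma)$, verify the edge balance equation directly, and then let Theorem~\ref{thm:existence_uniqueness_S}\ref{p5:existence_uniqueness_S} deliver uniqueness and stability. This buys you existence, the explicit formula, and the ``Additionally'' clause in one pass, and it bypasses the fixed-point machinery entirely in the acyclic case — which is consistent with the paper's own remark that this is precisely the case where the general argument degenerates into an explicit inversion. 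Your use of the general lower bound of Theorem~\ref{thm:p-norm-computable}\ref{p1:p-norm_maf} with $\|\prj\|_\infty=1$ to get $\alpha_\infty(\gamma)\ge\sinc(\gamma)$ is exactly the alternative route the paper records in a separate remark; the exact value versus the lower bound makes no difference here, since either yields $\sin(\gamma)\le\alpha_\infty(\gamma)\gamma$. All the delicate points you flag (the principal branch of $\arcsin$, strictness of $\gamma<\tfrac{\pi}{2}$, injectivity of $B^\top$ on $\vect{1}_n^\perp$) are handled correctly.
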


\begin{proof}
Since $G$ is acyclic, Theorem~\ref{thm:projection_property}\ref{p3:acyclic} implies
that $\prj=B^{\top}L^{\dagger}B\mathcal{A}=I_m=I_{n-1}$.  

\ref{p1:nec_suf_condition_acyclic}
$\Longrightarrow$~\ref{p2:state_existence_uniqueness_acyclic}: Note
that, for every $\mathbf{y}\in D_{\infty}(\gamma)$ and every
$\mathbf{z}\in \Img(B^{\top})$ such that $\|\mathbf{z}\|_{\infty}=1$, we have 
\begin{align*}
\left\|\diag(\sinc(\mathbf{y}))\mathbf{z}\right\|_{\infty}\ge \sinc(\gamma)\|\mathbf{z}\|_{\infty}=\sinc(\gamma).
\end{align*}
In turn, this implies that
\begin{align*}
\alpha_{\infty}(\gamma)&=\min_{\mathbf{y}\in D_{\infty}(\gamma)}\min_{\substack{\mathbf{z}\in \Img(B^{\top}), \\ \|\mathbf{z}\|_{\infty}=1}} \left\|\prj\diag(\sinc(\mathbf{y}))\mathbf{z}\right\|_{\infty}\\ &=\min_{\mathbf{y}\in D_{\infty}(\gamma)} \min_{\substack{\mathbf{z}\in \Img(B^{\top}), \\ \|\mathbf{z}\|_{\infty}=1}}\left\|\diag(\sinc(\mathbf{y}))\mathbf{z}\right\|_{\infty}\\ & = \sinc(\gamma).
\end{align*}
Therefore, by using the test~\eqref{test:p-norm-new} for $p=\infty$, there exists a unique
locally stable synchronization manifold for the Kuramoto
model~\eqref{eq:kuramoto_model}.

\ref{p2:state_existence_uniqueness_acyclic}
$\Longrightarrow$~\ref{p1:nec_suf_condition_acyclic}: If
there exists a unique locally stable synchronization manifold
$\mathbf{x}^*$ for the Kuramoto model~\eqref{eq:kuramoto_model} in
$S^{G}(\gamma)$, by Theorem~\ref{thm:existence_uniqueness_S}\ref{p4:equivalence}, we have
\begin{align*}
  B^{\top}L^{\dagger}\omega=\prj \sin(B^{\top}\mathbf{x}^*)=\sin(B^{\top}\mathbf{x}^*).
\end{align*}
Since $\mathbf{x}^*\in S^{G}(\gamma)$ and $\gamma\in [0,\tfrac{\pi}{2})$,
we have $\left\|\sin(B^{\top}\mathbf{x}^*)\right\|_{\infty}\le
\sin(\gamma)$ and, in turn, 
\begin{align*}
  \left\|B^{\top}L^{\dagger}\omega\right\|_{\infty}\le \sin(\gamma).
\end{align*}
This completes the proof of equivalence of~\ref{p1:nec_suf_condition_acyclic}
and~\ref{p2:state_existence_uniqueness_acyclic}. If $\mathbf{x}^*$ is a synchronization manifold for the Kuramoto model~\eqref{eq:kuramoto_model} in
$S^{G}(\gamma)$, then we have $B^{\top}L^{\dagger}\omega=\prj \sin(B^{\top}\mathbf{x}^*)=\sin(B^{\top}\mathbf{x}^*)$.
This implies that
\begin{align*}
B^{\top}\mathbf{x}^*=\arcsin(B^{\top}L^{\dagger}\omega).
\end{align*}
Therefore, by pre-multiplying both side of the above equality into $B\mathcal{A}$,
we obtain
\begin{align*}
  B\mathcal{A}B^{\top}\mathbf{x}^*=L\mathbf{x}^*=B\mathcal{A}\arcsin(B^{\top}L^{\dagger}\omega).
\end{align*}
Thus, since $\mathbf{x}^*\in \vect{1}_n^{\perp}$, we get
$\mathbf{x}^*=L^{\dagger}B\mathcal{A}\arcsin(B^{\top}L^{\dagger}\omega)$. This
completes the proof of the theorem. 
\end{proof}

   \begin{remark}[Alternative way to recover acyclic case]
   One can prove Corollary~\ref{cor:acyclic}, using the lower bounds
   given in
   Theorem~\ref{thm:p-norm-computable}\ref{p1:p-norm_maf}. Note that,
   for acyclic graphs,
   Theorems~\ref{thm:projection_property}\ref{p3:acyclic}
   and~\ref{thm:p-norm-computable}\ref{p1:p-norm_maf} imply that
   $\alpha_\infty(\gamma)\ge \sinc(\gamma)$.  Combining this bound
   with the test~\eqref{test:p-norm-new} for $p=\infty$, we obtain
   $\|B^{\top}L^{\dagger}\omega\|_{\infty}\le \sin(\gamma)$.
   \end{remark}

\subsubsection{Unweighted ring graphs and unweighted complete graphs}

To the best of our knowledge, the
sharpest sufficient condition for existence of a synchronization
manifold in the
domain $S^{G}(\gamma)$ for the Kuramoto
model~\eqref{eq:kuramoto_model} with unweighted complete graph $G$ is given by the following test~\cite[Theorem
  6.6]{FD-FB:13b}:
\begin{align}\label{eq:best-complete}
  \|B^{\top}L^{\dagger}\omega\|_{\infty}\le \sin(\gamma), 
\end{align}
and for the Kuramoto model~\eqref{eq:kuramoto_model} with unweighted
ring graph $G$ is given by the following test~\cite[Theorem 3, Condition
  3]{FD-MC-FB:11v-pnas}:
\begin{align}\label{eq:best-ring}
  \|B^{\top}L^{\dagger}\omega\|_{\infty}\le \frac{1}{2}\sin(\gamma).  
\end{align} 

One can use the lower bound given in
Theorem~\ref{thm:p-norm-computable}\ref{p1:p-norm_maf} to obtain
another sufficient conditions for synchronization of unweighted complete
and ring graphs.

\begin{corollary}[\textbf{Sufficient synchronization conditions for unweighted
  complete and ring graphs}]
Consider the Kuramoto model~\eqref{eq:kuramoto_model} with either
unweighted complete or unweighted ring graph $G$, the incidence matrix $B$, cutset
projection $\prj$, and $\omega\in \vect{1}_n^{\perp}$. For every $n\in \N$, define the scalar function $\map{h_n}{[0,\tfrac{\pi}{2})}{\real_{\ge0}}$ by
\begin{align}\label{eq:def:h_n}
  h_n(\gamma)= \sin(\gamma)-\frac{n-2}{2n}\big(\gamma-\sin(\gamma)\big).
\end{align}
If the following condition holds: 
\begin{align*}
\|B^{\top}L^{\dagger}\omega\|_{\infty}\le h_n(\gamma), 
\end{align*}
then there exists a unique locally stable synchronization manifold
$\mathbf{x}^*$ for the Kuramoto model~\eqref{eq:kuramoto_model} in
$S^{G}(\gamma)$ 
\end{corollary}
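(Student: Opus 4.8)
The plan is to obtain this corollary as an immediate specialization of Theorem~\ref{thm:p-norm-computable}\ref{p1:p-norm_maf} to $p=\infty$, feeding the resulting lower bound on $\alpha_\infty(\gamma)$ into the general test~\eqref{test:p-norm-new}. The only topology-dependent ingredient is the value of $\|\prj\|_\infty$: by Theorem~\ref{thm:projection_property}\ref{p4:complete} and~\ref{thm:projection_property}\ref{p5:ring}, it equals $\tfrac{2(n-1)}{n}$ for both the unweighted complete graph and the unweighted ring graph on $n$ nodes, and it is exactly this common value that allows a single scalar function $h_n$ to cover both families at once.

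Concretely, I would first apply Theorem~\ref{thm:p-norm-computable}\ref{p1:p-norm_maf} with $p=\infty$ and multiply through by $\gamma$: for every $\gamma\in[0,\tfrac{\pi}{2})$, using $\sinc(\gamma)=\sin(\gamma)/\gamma$,
\begin{align*}
  \alpha_\infty(\gamma)\gamma
  &\ge \gamma\left(\frac{1+\sinc(\gamma)}{2}-\|\prj\|_\infty\,\frac{1-\sinc(\gamma)}{2}\right) \\
  &= \frac{\gamma+\sin\gamma}{2}-\|\prj\|_\infty\,\frac{\gamma-\sin\gamma}{2}.
\end{align*}
Substituting $\|\prj\|_\infty=\tfrac{2(n-1)}{n}$ and collecting the coefficients of $\gamma$ and of $\sin\gamma$ over the common denominator $2n$, the right-hand side simplifies to $\tfrac{(3n-2)\sin\gamma-(n-2)\gamma}{2n}$, which is precisely $\sin\gamma-\tfrac{n-2}{2n}(\gamma-\sin\gamma)=h_n(\gamma)$. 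Hence $h_n(\gamma)\le\alpha_\infty(\gamma)\gamma$ throughout $[0,\tfrac{\pi}{2})$. Then, if $\|B^{\top}L^{\dagger}\omega\|_\infty\le h_n(\gamma)$, we get $\|B^{\top}L^{\dagger}\omega\|_\infty\le\alpha_\infty(\gamma)\gamma$, which is test~\eqref{test:p-norm-new} for $p=\infty$; Theorem~\ref{thm:existence+uniqueness_cond-new} then delivers the unique, locally exponentially stable synchronization manifold in $S^{G}(\gamma)$.

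There is no genuine obstacle in this argument — Theorems~\ref{thm:projection_property},~\ref{thm:p-norm-computable}, and~\ref{thm:existence+uniqueness_cond-new} do all the heavy lifting, and the rest is the algebraic identity above. The single point I would be careful to record (so that the stated codomain $\real_{\ge0}$ is justified and the hypothesis non-vacuous) is that $h_n(\gamma)\ge0$ on $[0,\tfrac{\pi}{2})$: indeed $h_n(0)=0$, and since $\sin\gamma/\gamma\ge 2/\pi$ on $(0,\tfrac{\pi}{2})$ while $(3n-2)\tfrac{2}{\pi}\ge n-2$ for all $n\ge2$, we have $(3n-2)\sin\gamma\ge(n-2)\gamma$, so $h_n\ge0$. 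I would also add, as a remark rather than part of the proof, that $h_n$ is maximized at $\gamma=\arccos\tfrac{n-2}{3n-2}=\gamma_\infty^{*}$, so that the corollary at $\gamma=\gamma_\infty^{*}$ recovers the $p=\infty$ instance of test~\eqref{test:p-norm-computable}, and for $\gamma\neq\gamma_\infty^{*}$ it is a more conservative condition that nonetheless certifies synchronization inside the specific domain $S^{G}(\gamma)$.
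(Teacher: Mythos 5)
Your proposal is correct and follows exactly the paper's own route: invoke Theorem~\ref{thm:projection_property}\ref{p4:complete} and \ref{p5:ring} to get $\|\prj\|_{\infty}=\tfrac{2(n-1)}{n}$, feed this into the lower bound of Theorem~\ref{thm:p-norm-computable}\ref{p1:p-norm_maf} with $p=\infty$, multiply by $\gamma$ to identify the right-hand side with $h_n(\gamma)$, and conclude via test~\eqref{test:p-norm-new} and Theorem~\ref{thm:existence+uniqueness_cond-new}. Your added check that $h_n\ge 0$ on $[0,\tfrac{\pi}{2})$ and the observation that $h_n$ peaks at $\gamma_\infty^{*}$ are correct refinements the paper omits, but they do not change the argument.
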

\begin{proof}
By Theorem~\ref{thm:projection_property}\ref{p4:complete} and~\ref{p5:ring}, the
$\infty$-norm of the cutset projection matrix $\prj$ for graph $G$
with $n$ nodes which is either unweighted complete or unweighted ring, is given by $\|\prj\|_{\infty}=\frac{2(n-1)}{n}$. Therefore,
Theorem~\ref{thm:p-norm-computable}\ref{p1:p-norm_maf} implies the
following lower bound:
\begin{align}
  \label{eq:francesco-added-label-on-plane}
  \alpha_\infty(\gamma) \ge \sinc(\gamma)-\frac{n-2}{2n}\big(1-\sinc(\gamma)\big).
\end{align}
Combining the bound on $\alpha_\infty(\gamma)$ with the
test~\eqref{test:p-norm-new} for $p=\infty$, we get the following test
for the synchronization of unweighted complete graphs:
\begin{align}\label{eq:complete-graph-bound}
  \|B^{\top}L^{\dagger}\omega\|_{\infty}
  \le \sin(\gamma)-\frac{n-2}{2n}\big(\gamma-\sin(\gamma)\big)= h_n(\gamma).
\end{align}
The proof of the corollary is complete by using Theorem~\ref{thm:existence+uniqueness_cond-new}. 
\end{proof}
Note that, the function $\gamma\mapsto \left(\gamma-\sin(\gamma)\right)$ is positive and increasing on the
interval $\gamma\in (0,\tfrac{\pi}{2}]$. Therefore, for every $k<n$
and every $\gamma\in [0,\tfrac{\pi}{2})$, we have $h_n(\gamma)\le
h_k(\gamma)\le \sin(\gamma)$. Thus, for unweighted complete graphs,
the test~\eqref{eq:complete-graph-bound} is more conservative than the
existing test~\eqref{eq:best-complete}. It is worth mentioning that the gap between the new
test~\eqref{eq:complete-graph-bound} and the existing
test~\eqref{eq:best-complete} decreases with the decrease of the angle
$\gamma$. For instance, for $\gamma=\frac{\pi}{2}$, the right hand
side of the test~\eqref{eq:complete-graph-bound} asymptotically
converges to $\frac{3}{2}-\frac{\pi}{2}\approx0.715<1$. Therefore, at
$\gamma=\tfrac{\pi}{2}$, the sufficient
test~\eqref{eq:complete-graph-bound} is approximately $28.50~\%$ more
conservative than the test~\eqref{eq:best-complete}. Instead, for
$\gamma=\frac{\pi}{4}$, the right hand side of the
test~\eqref{eq:complete-graph-bound} asymptotically converges to
$\frac{3\sqrt{2}}{4}-\frac{\pi}{8}\approx0.668<\sin(\tfrac{\pi}{4})=\frac{\sqrt{2}}{2}\approx
0.707$. This means that, at $\gamma=\tfrac{\pi}{4}$, the
test~\eqref{eq:complete-graph-bound} is approximately $5.53~\%$ more
conservative than the test~\eqref{eq:best-complete}. The comparison
between the graph of the functions $h_5(x)$, $h_{10}(x)$, and $h_{20}(x)$
and $\sin(x)$ over the interval $[0,\tfrac{\pi}{2})$ is shown in
Figure~\ref{fig:complete_ring_comparison}.
   
For unweighted ring graphs, it is easy to see that the sufficient
condition~\eqref{eq:complete-graph-bound} is always sharper
than the existing sufficient condition~\eqref{eq:best-ring}. The
comparison between the graph of the functions $h_5(x)$, $h_{10}(x)$, and
$h_{20}(x)$ and $\tfrac{1}{2}\sin(x)$ for $x\in[0,\tfrac{\pi}{2})$ is
shown in Figure~\ref{fig:complete_ring_comparison}.


\begin{figure}[!htb]

\centering
        \includegraphics[width=.75\linewidth]{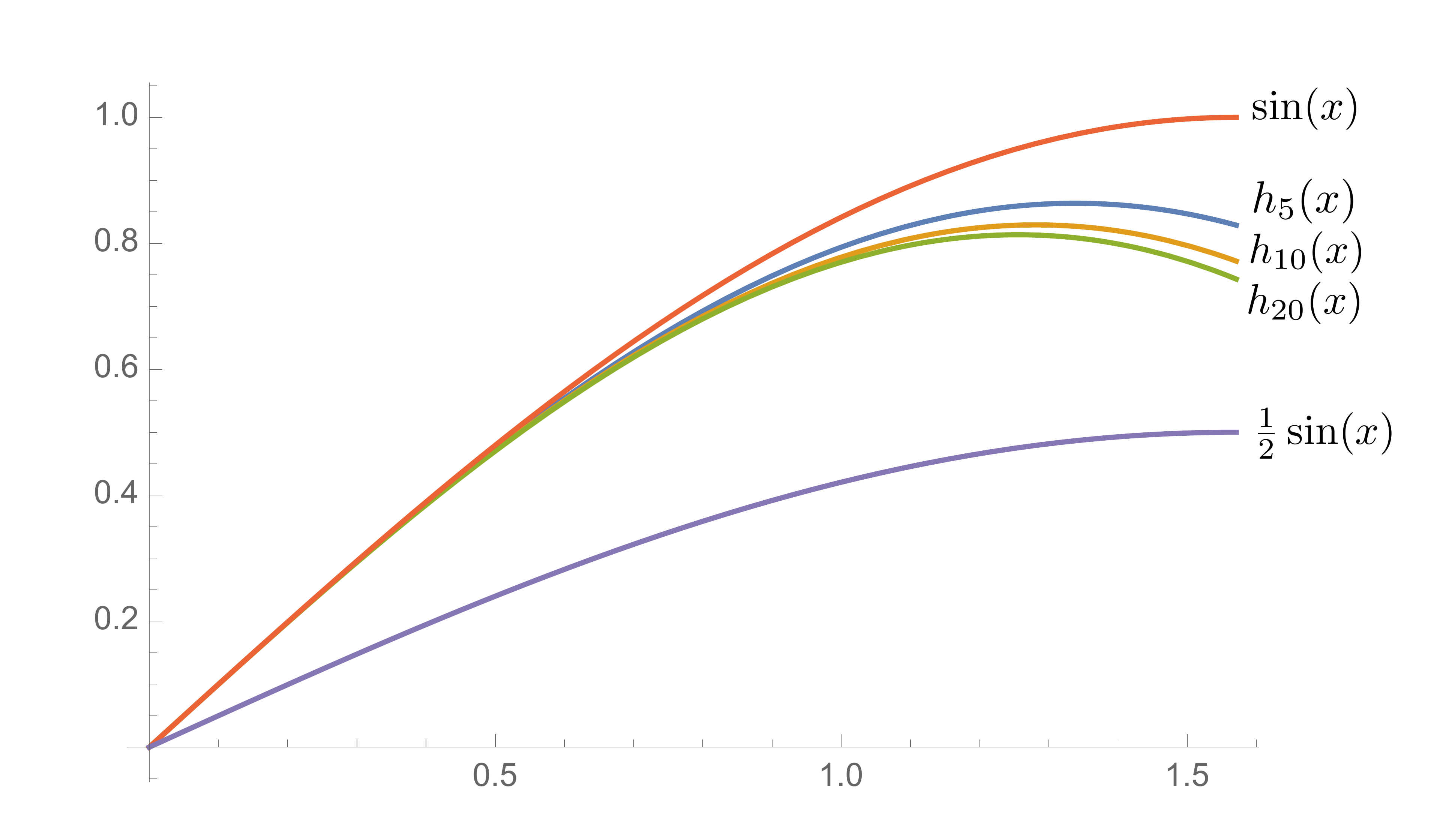}
        \caption{Comparison of the new sufficient tests with the existing
         sufficient tests for unweighted complete graphs and unweighted ring graphs.} \label{fig:complete_ring_comparison}
\end{figure}

\subsubsection{IEEE test cases}
Here we consider various IEEE test cases described by a connected
graph $G$ and a nodal admittance matrix $Y\in \mathbb{C}^{n\times
  n}$. The set of nodes of $G$ is partitioned into a set of load buses
$\mathcal{V}_1$ and a set of generator buses $\mathcal{V}_2$. The
voltage at the node $j\in \mathcal{V}_1\cup\mathcal{V}_2$ is denoted
by $V_j$, where $V_j= |V_j|e^{\imagunit\theta_j}$ and the power demand
(resp. power injection) at node $j\in \mathcal{V}_1$ (resp. $j\in \mathcal{V}_2$)
is denoted by $P_j$. By ignoring the resistances in the network, the
synchronization manifold $[\theta]$ of the network satisfies the
following Kuramoto model~\cite{FD-MC-FB:11v-pnas}:
\begin{align}\label{eq:power-network}
P_j - \sum\nolimits_{l\in \mathcal{V}_1\cup \mathcal{V}_2} a_{jl}\sin(\theta_j-\theta_l)=0,\quad\forall
  j\in \mathcal{V}_1\cup \mathcal{V}_2. 
\end{align}
where $a_{jl}=a_{lj}=|V_j||V_l|\Im(Y_{jl})$, for connected nodes $j$
and $l$. For the nine IEEE test cases given in
Table~\ref{tab:IEEE-test-cases}, we numerically check the existence of a
synchronization manifold for the Kuramoto
model~\eqref{eq:power-network} in the domain $S^{G}({\pi}/{2})$. We
consider effective power injections to be a scalar multiplication of
nominal power injections, i.e., given nominal injections
$P^{\mathrm{nom}}$ we set $P_j=K P^{\mathrm{nom}}_j$, for some $K\in
\real_{>0}$ and for every $j\in \mathcal{V}_1\cup\mathcal{V}_2$. The
voltage magnitudes at the generator buses are pre-determined and the
voltage magnitudes at load buses are computed by solving the reactive
power balance equations using the optimal power flow solver provided
by MATPOWER~\cite{RDZ-CEMS-RJT:11}. The critical coupling of the
Kuramoto model~\eqref{eq:power-network} is denoted by $\subscr{K}{c}$
and is computed using MATLAB \emph{fsolve}. For a given test T, the
smallest value of scaling factor for which the test T fails is denoted
by $\subscr{K}{T}$. We define the critical ratio of the test T by
${\subscr{K}{T}}/{\subscr{K}{c}}$. Intuitively speaking, the critical
ratio shows the accuracy of the test~T. Table~\ref{tab:IEEE-test-cases}
contains the following information:
\begin{description}
\item[The first two columns] contain the critical ratio of the prior
  test~\eqref{test:best-known} from the literature and the new
  sufficient test~\eqref{test:p-norm-computable} proposed in this paper.
\item[The third column] gives the critical ratio for the following
  approximate test proposed in~\cite{FD-MC-FB:11v-pnas}:
  \begin{align}\label{test:Flo_approximate}
    \tag{AT0} \|B^{\top}L^{\dagger}\omega\|_{\infty}\le 1.
  \end{align}

\item[The fourth column] gives the critical ratio for following
  approximate version of the test~\eqref{test:p-norm-new}:
  \begin{align}\label{test:T1_approximate_fmincon}
    \tag{AT1} \|B^{\top}L^{\dagger}\omega\|_{\infty}\le
    ({\pi}/{2}) \alpha_{\infty}^*({\pi}/{2}).
  \end{align}
  where $\alpha_{\infty}^*({\pi}/{2})$ is the approximate value
  for $\alpha_{\infty}({\pi}/{2})$ computed using MATLAB
  \emph{fmincon}. Test~\eqref{test:T1_approximate_fmincon} is
  approximate since, in general, \emph{fmincon} may not converge to a
  solution and, even when it converges, the solution is only guaranteed
  to be an upper bound for $\alpha_{\infty}({\pi}/{2})$.
\end{description}
\begin{center}
\begin{table}[htb] 
\resizebox{0.99\linewidth}{!}{
\begin{tabular}{|l|c|c|c|c|}
\hline
\multirow{3}{*}{Test Case}  
			&\multicolumn{4}{c|}{Critical ratio $\subscr{K}{T}/\subscr{K}{c}$}\\
                         \cline{2-5}
&$\lambda_2$ test &$\infty$-norm test & Approx.
                                                         test
                                                       &General test
  \\
  &\eqref{test:best-known}\cite{FD-FB:12i}&\eqref{test:p-norm-computable}%
  &\eqref{test:Flo_approximate}~\cite{FD-MC-FB:11v-pnas}&\eqref{test:T1_approximate_fmincon}\\
\hline
\rowcolor{Gray}
IEEE 9  & 16.54~$\%$ & 73.74~$\%$& 92.13~$\%$& 85.06~$\%$\textsuperscript{\dag} \\

IEEE 14  & 8.33~$\%$& 59.42~$\%$& 83.09~$\%$& 81.32~$\%$\textsuperscript{\dag}\\

\rowcolor{Gray}
IEEE RTS 24 &3.86~$\%$& 53.44~$\%$&89.48~$\%$&89.48~$\%$\textsuperscript{\dag}\\

IEEE 30 &2.70~$\%$&55.70~$\%$&85.54~$\%$&85.54~$\%$\textsuperscript{\dag}\\

\rowcolor{Gray}
IEEE 39 &2.97~$\%$&67.57~$\%$&100~$\%$&100~$\%$\textsuperscript{\dag}\\

IEEE 57 &0.36~$\%$&40.69~$\%$&84.67~$\%$&\textemdash\textsuperscript{*}\\

\rowcolor{Gray}
IEEE 118 &0.29~$\%$&43.70~$\%$&85.95~$\%$&\textemdash\textsuperscript{*}\\

IEEE 300 & 0.20~$\%$&40.33~$\%$&99.80~$\%$&\textemdash\textsuperscript{*}\\
\rowcolor{Gray}
Polish 2383 &0.11~$\%$&29.08~$\%$&82.85~$\%$&\textemdash\textsuperscript{*}\\
\hline
\end{tabular}}
\begin{tablenotes}\footnotesize
                          \item \textsuperscript{\dag} \emph{fmincon}
                          has been run for $100$ randomized initial
                          phase angles. \item \textsuperscript{*} \emph{fmincon} does not
                          converge.
\end{tablenotes}
\caption{Comparison of sufficient and approximate synchronizations
  tests on IEEE test cases in the domain
  $S^{G}({\pi}/{2})$.}\label{tab:IEEE-test-cases}
\end{table}
\end{center}
Note how (i) our ordering \eqref{test:best-known} $<$ \eqref{test:p-norm-computable} $<$
\eqref{test:Flo_approximate} is representative of the tests' accuracy,
(ii) our proposed test \eqref{test:p-norm-computable} is two order of
magnitude more accurate that best-known prior
test~\eqref{test:best-known} in the larger test cases, (iii) the two
approximate tests \eqref{test:Flo_approximate} and
\eqref{test:T1_approximate_fmincon} are comparable (but our proposed
test \eqref{test:T1_approximate_fmincon} is much more computationally
complex).

\section{Conclusion}
In this paper, we introduced and studied the cutset projection, as a
geometric operator associated to a weighted undirected graph. This
operator naturally appears in the study of networks of Kuramoto
oscillators~\eqref{eq:kuramoto_model}; using this operator, we
obtained new families of sufficient conditions for network
synchronization. For a network of Kuramoto oscillators with incidence
matrix $B$, Laplacian $L$ and frequencies $\omega$, these sufficient
conditions are in the form of upper bounds on the $p$-norm of the edge
flow quantity $B^{\top}L^{\dagger}\omega$. In other words, our results
highlight the important role of this edge flow quantity in the
synchronization of Kuramoto oscillators.  We show that our results
significantly improve the existing sufficient conditions in the
literature in general and, specifically, for a number of IEEE power
network test cases.

Our approach and results suggest many future research
directions. First, it is important to study the cutset projection in
more detail and for more special cases. We envision that the cutset
projection and its properties will be a valuable tool in the study of
network flow systems, above and beyond the case of Kuramoto
oscillators. Secondly, it is of interest to analyze and improve the
accuracy of our sufficient conditions. This can be potentially done by
designing efficient algorithm for numerical computation (or
estimation) of the minimum amplification factor for large
graphs. Thirdly, it is interesting to compare our new $p$-norm tests,
for different $p\in [0,\infty)\cup\{\infty\}$, and potentially extend
  them using more general norms.  Finally, in power network
  applications, it is potentially of significance to generalize our
  novel approach to study the coupled power flow equations.

\section*{Acknowledgments}
The authors thank Elizabeth Huang for her helpful comments, and Brian
Johnson and Sairaj Dhople for wide-ranging discussions on power
networks. The second author would also like to thank Florian D\"orfler
for countless conversations on Kuramoto oscillators.

\appendices

\section{Proof of Theorems~\ref{thm:decomposition}
  and~\ref{thm:projection_property}}\label{app:decomposition}

We report here a useful well-known lemma, which is a simplified
version of~\cite[Theorem~13]{ABI-TNEG:03}.
\begin{lemma}[Oblique projections]\label{thm:13}
  For $m,n\in \N$, assume the matrices $X,Y\in \real^{n\times m}$
  satisfy $\Img(X)\oplus \Ker(Y^{\top})=\real^n$. Then the
  oblique projection matrix onto $\Img(X)$ parallel to $\Ker(Y^{\top})$ is
  $X(Y^{\top}X)^{\dagger}Y^{\top}$.
\end{lemma}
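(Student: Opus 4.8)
The plan is to verify directly that the matrix $P:=X(Y^{\top}X)^{\dagger}Y^{\top}$ is idempotent with $\Img(P)=\Img(X)$ and $\Ker(P)=\Ker(Y^{\top})$. Since the hypothesis gives $\real^n=\Img(X)\oplus\Ker(Y^{\top})$, and an idempotent $P$ always satisfies $\real^n=\Img(P)\oplus\Ker(P)$, these three facts are exactly what it means for $P$ to be the oblique projection onto $\Img(X)$ parallel to $\Ker(Y^{\top})$. (One could instead quote~\cite[Theorem~13]{ABI-TNEG:03} verbatim; the self-contained argument below is short.)

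First I would pin down how the pseudoinverse of $M:=Y^{\top}X\in\real^{m\times m}$ interacts with $X$ and $Y$. Let $r=\dim\Img(X)$. From $\Img(X)\oplus\Ker(Y^{\top})=\real^n$ we get $\dim\Ker(Y^{\top})=n-r$, hence $\dim\Img(Y^{\top})=r$. Because $\Img(X)\cap\Ker(Y^{\top})=\{0\}$, the map $Y^{\top}$ is injective on $\Img(X)$, so $\Img(M)=Y^{\top}\big(\Img(X)\big)$ has dimension $r$; as $\Img(M)\subseteq\Img(Y^{\top})$ and both have dimension $r$, we conclude $\Img(M)=\Img(Y^{\top})$. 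Transposing, $\Img(M^{\top})=\Img(X^{\top}Y)\subseteq\Img(X^{\top})$, and again both have dimension $r$, so $\Img(M^{\top})=\Img(X^{\top})=(\Ker X)^{\perp}$. The standard Moore\textendash{}Penrose identities then say that $MM^{\dagger}$ is the orthogonal projection onto $\Img(M)=\Img(Y^{\top})$ and $M^{\dagger}M$ is the orthogonal projection onto $\Img(M^{\top})=(\Ker X)^{\perp}$.

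The three required properties now follow formally. Idempotency: $P^{2}=XM^{\dagger}(Y^{\top}X)M^{\dagger}Y^{\top}=XM^{\dagger}MM^{\dagger}Y^{\top}=XM^{\dagger}Y^{\top}=P$. Image: $PX=X(M^{\dagger}M)=X$, since $M^{\dagger}M$ projects orthogonally onto $(\Ker X)^{\perp}$ and $X$ annihilates $\Ker X$; thus $\Img(X)=\Img(PX)\subseteq\Img(P)\subseteq\Img(X)$, so $\Img(P)=\Img(X)$ (and, from $PX=X$, $P$ fixes $\Img(X)$ pointwise). Kernel: $Y^{\top}v=0$ implies $Pv=XM^{\dagger}Y^{\top}v=0$, so $\Ker(Y^{\top})\subseteq\Ker(P)$; and since $P$ is idempotent, $\dim\Ker(P)=n-\dim\Img(P)=n-r=\dim\Ker(Y^{\top})$, which forces $\Ker(P)=\Ker(Y^{\top})$. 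This completes the verification.

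The only place the transversality hypothesis $\Img(X)\oplus\Ker(Y^{\top})=\real^n$ enters — and the one step that needs care — is the rank identity $\dim\Img(Y^{\top}X)=\dim\Img(X)$ (equivalently, that $Y^{\top}$ remains injective on $\Img(X)$), which in turn gives $\Img(M)=\Img(Y^{\top})$ and $\Img(M^{\top})=\Img(X^{\top})$; without transversality, $(Y^{\top}X)^{\dagger}$ need not see these full images and the formula breaks. Everything afterward is routine manipulation of the pseudoinverse identities, so I expect no further obstacle. An alternative that sidesteps the rank bookkeeping is to pick a basis of $\real^n$ adapted to the decomposition $\Img(X)\oplus\Ker(Y^{\top})$ and compute $P$ in block form, but the rank argument is the most economical route.
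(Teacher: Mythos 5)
Your proof is correct. The paper itself does not prove this lemma: it states it as "a useful well-known lemma, which is a simplified version of \cite[Theorem~13]{ABI-TNEG:03}" and leaves the verification to that reference, so there is no in-paper argument to compare against. Your self-contained verification is the standard one and all steps check out: the transversality hypothesis $\Img(X)\oplus\Ker(Y^{\top})=\real^n$ gives the rank identity $\operatorname{rank}(Y^{\top}X)=\operatorname{rank}(X)=\operatorname{rank}(Y)$, which is exactly what makes $M^{\dagger}M$ the orthogonal projection onto $\Img(M^{\top})=(\Ker X)^{\perp}$ and $MM^{\dagger}$ the orthogonal projection onto $\Img(Y^{\top})$; from there $P^2=P$, $PX=X$, and $\Ker(Y^{\top})\subseteq\Ker(P)$ follow formally, and the dimension count closes the kernel inclusion. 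You also correctly isolate the only delicate point, namely that without transversality the pseudoinverse $(Y^{\top}X)^{\dagger}$ no longer "sees" all of $\Img(X)$ and the formula fails. One tiny remark: the identity $\dim\Ker(P)=n-\dim\Img(P)$ is just rank--nullity and does not need idempotency, though idempotency is of course what you need to conclude that $P$ is a projection in the first place; this does not affect the validity of the argument.
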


We are now in a position to prove Theorems~\ref{thm:decomposition}
and~\ref{thm:projection_property}.

\begin{proof}[Proof of Theorem~\ref{thm:decomposition}]
Regarding the statement~\ref{p1:edge_decomposition}, we first show that
$\Img(B^{\top})\bigcap \Ker(B\mathcal{A})=\{\vect{0}_m\}$. Suppose that $v\in
\Img(B^{\top})\bigcap\Ker(B\mathcal{A})$.  Then there exists $\xi\in \real^n$
such that $v=B^{\top}\xi$ and
\begin{equation*}
B\mathcal{A}v=B\mathcal{A}B^{\top}\xi=L\xi=\vect{0}_n.
\end{equation*}
Since $G$ is connected, $0$ is a simple eigenvalue of the Laplacian
$L$ associated to the eigenvector $\vect{1}_n$. This implies that $\xi\in
\mathrm{span}\{\vect{1}_{n}\}$ and $v=B^{\top}\xi=\vect{0}_m$. Therefore, $\Img(B^{\top})\bigcap\Ker(B\mathcal{A})=\{\vect{0}_m\}$. 
Moreover, note that:
\begin{align*}
\dim(\Img(B^{\top}))&=n-1, \\
  \dim(\Ker(B\mathcal{A}))&=\dim(\Ker(B))=m-\dim(\Img(B))\\
  & =m-n+1.
\end{align*}
Therefore, $\real^m=\Img(B^{\top})\oplus \Ker(B\mathcal{A})$ as in
statement~\ref{p1:edge_decomposition}.

Statement~\ref{p2:oblique_projection} of the theorem follows directly from
Lemma~\ref{thm:13} with $X=B^\top$ and $Y=\mathcal{A}B^\top$.

Finally, statement~\ref{p1:idempotent} is a known consequence of
statements~\ref{p2:oblique_projection} and~\ref{p1:edge_decomposition}. It is
instructive, anyway, to provide an independent proof. Note the following
equalities:
\begin{align*}
\left(B^{\top}L^{\dagger}B\mathcal{A}\right)\left(B^{\top}L^{\dagger}B\mathcal{A}\right)&
=B^{\top}L^{\dagger}(B\mathcal{A}B^{\top})L^{\dagger}B\mathcal{A}\\ &
=B^{\top}(L^{\dagger}L) L^{\dagger}B\mathcal{A}.
\end{align*}
Using the fact that
$L^{\dagger}L=LL^{\dagger}=I_n-\frac{1}{n}\vect{1}_n\vect{1}_{n}^{\perp}$,
we obtain $B^{\top}L^{\dagger}L=B^{\top}$. This implies that 
\begin{align*}
\left(B^{\top}L^{\dagger}B\mathcal{A}\right)\left(B^{\top}L^{\dagger}B\mathcal{A}\right)=B^{\top}L^{\dagger}B\mathcal{A}. 
\end{align*}
Thus, the cutset projection $\prj$ is an idempotent matrix and its only
eigenvalues are $0$ and $1$~\cite[1.1.P5]{RAH-CRJ:12}.
\end{proof}

\begin{proof}[Proof of Theorem~\ref{thm:projection_property}]
Regarding part~\ref{p2:orthogonal}, when $\mathcal{A}=I_m$, we have
$\Ker(B\mathcal{A})=\Ker(B)$. Moreover, for every
$x\in \Img(B^{\top})$, there exists $\alpha\in
\vect{1}_n^{\perp}$ such that $B^{\top}\alpha=x$. Therefore, for every $y\in \Ker(B)$,
\begin{align*}
x^{\top}y=(B^{\top}\alpha)^{\top}y=\alpha^{\top}By=0.
\end{align*}
Moreover, we have $\dim(\Img(B^{\top}))+\dim(\Ker(B))=m$. This
implies that $\Img(B^{\top})=\left(\Ker(B)\right)^{\perp}$. Therefore,
the projection $\prj=B^{\top}L^{\dagger}B$ is an orthogonal
projection.

Regarding part~\ref{p3:acyclic}, since $G$ is acyclic, we have
$|\mathcal{E}|=n-1$ and $\Img(B^{\top})=\real^{n-1}$. Now consider a vector $x\in
\real^{n-1}$. Since  $\Img(B^{\top})=\real^{n-1}$, there exists a unique $\alpha\in
\vect{1}^{\perp}_{n}$ such that $x=B^{\top}\alpha$. Therefore, we have
\begin{equation*}
B^{\top}L^{\dagger}B\mathcal{A}(x)=B^{\top}L^{\dagger}B\mathcal{A}(B^{\top}\alpha)=B^{\top}L^{\dagger}L\alpha=B^{\top}\alpha=x.
\end{equation*}
This implies that $\prj=I_{n-1}=I_m$.

Regarding part~\ref{p4:complete}, note that for a unweighted complete
graph, we have
$L=n\left(I_n-\frac{1}{n}\vect{1}_n\vect{1}_n^{\top}\right)$ and
$L^{\dagger}=\frac{1}{n}\left(I_n-\frac{1}{n}\vect{1}_n\vect{1}_n^{\top}\right)$~\cite[Lemma
III.13]{FD-FB:11d}. We compute the cutset projection
$\prj$ for an unweighted complete graph as follows:
\begin{align*}
  \prj=B^{\top}L^{\dagger}B\mathcal{A}=\frac{1}{n}B^{\top}\left(I_n-\frac{1}{n}\vect{1}_n\vect{1}_n^{\top}\right)B\left(I_m\right)=\frac{1}{n}B^{\top}B.
\end{align*}
Recalling the meaning of the columns of $B$, we compute, for any two edges $e,f\in \mathcal{E}$,
\begin{equation*}
  \prj_{ef}= 
  \begin{cases}
    \tfrac{2}{n}, \quad & \text{if }e=f,\\
    \tfrac{1}{n}, & \text{if } e\ne f \text{ and $e$ and $f$ originate
      or terminate } \\[-.5ex]
    & \qquad\qquad\qquad\text{at the same node},
    \\
    \tfrac{-1}{n}, & \text{if } e\ne f \text{ and $e$ originates where $f$ terminates } \\
    &\qquad\qquad\qquad\text{or vice versa,} \\
    0, & \text{if } e \mbox{ and } f \mbox{ have no node in common}.
 \end{cases}
 \end{equation*} 
Using this expression, it is easy to see that 
 $\|\prj\|_{\infty}=\frac{2(n-1)}{n}$.

Regarding part~\ref{p5:ring}, first note that, for an unweighted ring
graph, one can choose the orientation of $G$ such that
\begin{align*}
\Img(B^{\top})=\vect{1}_n^{\perp},\qquad
\Ker(B)=\mathrm{span}\{\vect{1}_n\}.
\end{align*}
Moreover, $G$ is unweighted and, by part~\ref{p2:orthogonal}, the
cutset projection $\prj$ is an orthogonal projection onto
$\Img(B^{\top})$. This implies that
$\prj=I_n-\frac{1}{n}\vect{1}_n\vect{1}_n^{\top}$ and simple
bookkeeping shows that $\|\prj\|_{\infty}=(1-1/n)+(n-1)/n=2(n-1)/n$.
\end{proof}

\section{Proof of
  Theorem~\ref{thm:embedded_cohesive_characterization}}\label{app:phase_cohesive}

We start with a preliminary result.
\begin{lemma}\label{lem:existence_path}
  Given $\theta\in \Delta^{G}(\gamma)$, let
  $\mathbf{x}^{\theta}=\begin{pmatrix}x_1 & \cdots &
  x_n \end{pmatrix}^{\perp}\in \vect{1}_n^{\perp}$ be the output of the
  Embedding Algorithm with input $\theta$. Then, for every two nodes
  $a,b\in\until{n}$, there exists a simple path $(i_1,i_2,\ldots ,i_r)$ in $G$
  such that $i_1=a$, $i_r=b$, and
  \begin{align*}
    |x_{i_k}-x_{i_{k-1}}|\le\gamma,\qquad\text{for all } k\in\{2,\ldots,r\}.
  \end{align*}
\end{lemma}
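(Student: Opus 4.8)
The plan is to recognize that the Embedding Algorithm implicitly constructs a spanning tree of $G$ every one of whose edges $\{j,k\}$ automatically satisfies $|x_j-x_k|\le\gamma$, and then to route from $a$ to $b$ along the unique path inside that tree. As a first reduction, observe that the final line $\mathbf{x}^{\theta}_i := x_i - \operatorname{average}(x_1,\dots,x_n)$ preserves every pairwise difference $x_i-x_j$; hence it is enough to prove the asserted inequality for the uncentered values $x_1,\dots,x_n$ generated inside the \textbf{while} loop.

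Next I would show that the edges $\{j,k\}$ selected in line~3 across all iterations form a spanning tree $T$ of $G$. The loop executes exactly $n-1$ times: whenever $|S|<n$, the set $S$ is a proper nonempty subset of $\until{n}$, so connectedness of $G$ guarantees an edge with one endpoint in $S$ and one in $\until{n}\setminus S$ (so line~3 never fails), and each iteration enlarges $S$ by exactly one vertex. Thus $T$ has $n-1$ edges, is connected (each newly added vertex attaches to the component built so far), contains no cycle, and spans all $n$ nodes, i.e.\ $T$ is a spanning tree. Moreover, for any edge $\{j,k\}\in T$ we have $(j,k)\in\mathcal{E}$ and $\theta\in\Delta^{G}(\gamma)$, so the geodesic distance obeys $|\theta_j-\theta_k|\le\gamma$; since the algorithm sets $x_k=x_j+|\theta_j-\theta_k|$ or $x_k=x_j-|\theta_j-\theta_k|$, in either case $|x_j-x_k|=|\theta_j-\theta_k|\le\gamma$.

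Finally, given $a,b\in\until{n}$, I would take $(i_1,\dots,i_r)$ to be the unique simple path from $a$ to $b$ in $T$. Because $T$ is a subgraph of $G$, this is a simple path in $G$ with $i_1=a$ and $i_r=b$; and because each consecutive pair $\{i_{k-1},i_k\}$ is a tree edge, the previous paragraph gives $|x_{i_k}-x_{i_{k-1}}|\le\gamma$ for all $k\in\{2,\dots,r\}$, which (using that the centering step does not change these differences) is exactly the claim. There is no analytic content here; the only step needing attention is the purely combinatorial verification that the selected edges do form a spanning tree — in particular that line~3 can always be carried out — which is where connectedness of $G$ is used.
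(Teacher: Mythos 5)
Your proof is correct and follows essentially the same idea as the paper's: the edges selected in line~3 of the Embedding Algorithm form a spanning tree whose every edge satisfies $|x_j-x_k|=|\theta_j-\theta_k|\le\gamma$ because $\theta\in\Delta^G(\gamma)$. If anything, your explicit spanning-tree formulation is slightly cleaner than the paper's, which only backward-chains a path from node $1$ to a given node $a$ and leaves implicit how two such paths combine into a simple path between arbitrary $a$ and $b$; taking the unique tree path between $a$ and $b$ handles this directly.
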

\begin{proof}
It suffices to show that, for every
$a\in \{2,\ldots,n\}$ there exists a
simple path $(1,i_2\ldots i_r)$ from the node $1$ to node $a$ such that, for every $k$, we have
\begin{align*}
|x_{i_k}-x_{i_{k-1}}|\le\gamma.
\end{align*}
Start with node $a$. Suppose that $a$ is being added to $S$ in the
Embedding algorithm in the $r$th iteration. For every $k\in
\{1,\ldots,r\}$, we denote the set $S$ in the $k$th iteration of the
Embedding algorithm by $S_k$. Therefore $a\in S_{r}$ but
$a\not\in S_{r-1}$. Thus, by the algorithm, there exists
$i_{r-1}\in S_{r-1}$ such that $|x_{i_{r-1}}-x_{i}|\le \gamma$. Now,
we can repeat this procedure for $i_{r-1}$ to get $i_{r-2}$ such that $|x_{i_{r-1}}-x_{i}|\le \gamma$. We can continue doing this
procedure until we get to the node $1$. Thus we get the simple path $(1,i_1,\ldots,i_r=a)$.
\end{proof}

We are now ready to provide the main proof of interest.

\begin{proof}[Proof of
  Theorem~\ref{thm:embedded_cohesive_characterization}]
Regarding part~\ref{p1:inclusion-chain}, we first show the inclusion
$S^{G}(\gamma)\subseteq \Delta^G(\gamma)$. Let $\theta\in
S^{G}(\gamma)$. Then, by definition of $S^{G}(\gamma)$, there exists
$\mathbf{x}\in \vect{1}_n^{\perp}$ and $s\in [0,2\pi)$ such that 
\begin{align*}
\|B^{\top}\mathbf{x}\|_{\infty}\le \gamma, \qquad
 [\theta]=[\exp(\imagunit \mathbf{x})]. 
\end{align*} 
This means that, for every $(i,j)\in\mathcal{E}$, we have
\begin{align*}
|\theta_i-\theta_j|=|\exp(\imagunit
  x_i)-\exp(\imagunit x_j)|\le |x_i-x_j|\le \gamma.
\end{align*}
Therefore, $[\theta]\in \Delta^G(\gamma)$. 

Now consider a point $\theta\in \overline{\Gamma}(\gamma)$. By definition, there exists an
arc of length $\gamma$ on $\mathbb{S}^{1}$ which contains
$\theta_1,\ldots,\theta_n$. Since $0\le \gamma<\pi$, this implies that
there exists $\mathbf{y}\in \real^n$ such that $\theta=\exp(\imagunit \mathbf{y})$ and, for every
$(i,j)\in\mathcal{E}$, we have
\begin{align*}
|\theta_i-\theta_j|=|y_i-y_j|.
\end{align*}
We define the vector
$\mathbf{x}\in \vect{1}_n^{\perp}$ by $\mathbf{x}=\mathbf{y}-\mathrm{average}(\mathbf{y})\vect{1}_n$.
Then it is clear that $\mathbf{x}\in \vect{1}_n^{\perp}$ and
$[\exp(\imagunit \mathbf{x})]=[\exp(\imagunit \mathbf{y})]=[\theta]$. On the other hand, for every
$(i,j)\in \mathcal{E}$, the distance between $x_i$ and $x_j$ is the same as 
the distance between $y_i$ and $y_j$. Therefore, we have
\begin{align*}
|x_i-x_j|=|y_i-y_j|=|\theta_i-\theta_j|\le \gamma.
\end{align*}
This implies that $\|B^{\top}\mathbf{x}\|_{\infty}\le
\gamma$. Therefore, $\theta\in S^{G}(\gamma)$. 

Regarding part~\ref{p2:exp}, using Lemma~\ref{lem:existence_path}, it is a
straightforward exercise to show that if
$\mathbf{x}^{\theta}=(x_1,\ldots,x_n)$, then we have
$|x_i-x_j|=|\theta_i-\theta_j|+2\pi k$, for some $k\in \mathbb{Z}_{\ge
  0}$. This implies that $\exp(\imagunit \mathbf{x}^{\theta})\in [\theta]$.

Regarding part~\ref{p3:embeded_cohesive}, suppose that $\mathbf{x}^{\theta}\in \bperp$. Then we have
$\|B^{\top}\mathbf{x}^{\theta}\|_{\infty}\le\gamma$. Since
$[\theta]=[\exp(\imagunit \mathbf{x}^{\theta})]$, it is clear that
$[\theta]\in S^{G}(\gamma)$.

Now suppose that $[\theta]\in S^{G}(\gamma)$, we will show that
$\mathbf{x}^{\theta}\in \bperp$. Assume that
$\mathbf{x}^{\theta}\not\in \bperp$. Therefore, there exists $(a,b)\in
\mathcal{E}$ such that $|x_a-x_b|>\gamma$. However, by the definition
of the set $S^{G}(\gamma)$, there exists $\mathbf{y}\in
\vect{1}_n^{\perp}$ such that $\theta=[\exp(\imagunit \mathbf{y})]$
and $\|B^{\top}\mathbf{y}\|_{\infty}\le\gamma$.  This implies that
$\theta=[\exp(\imagunit \mathbf{x})]=[\exp(\imagunit
  \mathbf{y})]$. Thus, there exists a vector
$\mathbf{v}\in\vect{1}_n^{\perp}$ whose components are integers such
that $\mathbf{y}=\mathbf{x}+2\pi\mathbf{v}$. Since we have
$|x_a-x_b|>\gamma$ and $|y_a-y_b|\le \gamma$, we cannot have $x_a=y_a$
and $x_b=y_b$. Therefore, $v_a\ne 0$ and we have $y_a=x_a+2\pi v_a$.
However, by Lemma~\ref{lem:existence_path}, since $[\theta]\in
S^G(\gamma)\subseteq \Delta^{G}(\gamma)$, there exists a simple path
$(i_1,\ldots ,i_r)$ between the nodes $a$ and $b$ such that, for every
$k\in \{1,\ldots,r\}$, we have $|x_{i_k}-x_{i_{k-1}}|\le\gamma$.
Since $|x_a-x_{i_2}|\le \gamma$ and $|y_{i}-y_{i_2}|\le \gamma$ we
have $y_{i_2}=x_{i_2}+2\pi v_a$. Similarly, one can show that
$y_{i_k}=x_{i_k}+2\pi v_a$, for every $k\in \{1,\ldots,r\}$.  This
implies that $y_b=x_b+2\pi v_a$. However, this means that
$|x_a-x_b|=|y_a-y_b|\le \gamma$, which is a contradiction. 

Regarding part~\ref{p4:diffeomorphic_S}, we first show that the set
$\bperp$ is compact. Since $\bperp\subset\real^{n}$, it suffice to
show that it is closed and bounded. We first show that, for every
$\mathbf{x}\in \bperp$, we have
\begin{align*}
-m\pi \le x_i\le m\pi,\qquad\forall i\in \until{n},
\end{align*}
where $m$ is the number of edges. Suppose that $\mathbf{x}\in \bperp$
and for some $k\in \until{n}$ we have $x_k>m\pi$. Since
$\mathbf{x}\in \bperp$, we get
\begin{align*}
|x_i-x_j|\le \gamma,\qquad\forall i,j\in\until{n}. 
\end{align*}
On the other hand, $\gamma\in [0,\pi)$ and $G$ is
connected. Therefore, for every $i\in \until{n}$, there exists a simple
path $(i_{1},i_2,\ldots,i_r)$ of length at most $m$ such that $i_1=1$
and $i_r=k$. This implies that 
\begin{align*}
  |x_k-x_i| &= |x_k-x_{i_{r-1}}+x_{i_{r-1}}-x_{i_{r-2}}+\ldots+x_{i_2}-x_1|\\ 
  &\le
  |x_k-x_{i_{r-1}}|+|x_{i_{r-1}}-x_{i_{r-2}}|+\ldots+|x_{i_2}-x_1|\\ &\le
  m\gamma
  <m\pi.
\end{align*}
Therefore, for every $i\in\until{n}$, we have $x_i>0$. As a
result, we get $\vect{1}_n^{\top}\mathbf{x}=\sum_{i=1}^{n}x_i>0$,
which is a contradiction since $\mathbf{x}\in \bperp$ and we
have $\mathbf{x}\in\vect{1}_n^{\perp}$. Similarly, one can show that,
for every $\mathbf{x}\in\bperp$, we have $-m\pi\le x_i$, for
every $i\in \until{n}$. Therefore, $\bperp$ is
bounded. The closedness of the set $\bperp$ is clear from
continuity of the $\infty$-norm. This implies that $\bperp$
is compact. Now we define the map $\xi: \bperp\to S^{G}(\gamma)$ by
$\xi(\mathbf{x}) = [\exp(\imagunit \mathbf{x})]$. We show that $\xi$ is a
real analytic diffeomorphism. It is easy to check that, for every $\mathbf{x}\in \bperp$, $D_{\mathbf{x}}\xi$ is
an isomorphism. Therefore $\xi$ is local diffeomorphim for every
$\mathbf{x}\in \bperp$. Now we show that $\xi$ is
one-to-one on the set $\bperp$. Suppose that, for $\mathbf{x},\mathbf{y}\in \bperp$, we
have $\exp(\imagunit\mathbf{x})=\exp(\imagunit\mathbf{y})$. Therefore,
we get $\mathbf{x}=\mathbf{y}+2\pi \mathbf{v}$, where $\mathbf{v}\in \vect{1}_n^{\perp}$ is a vector whose components
are integers. We will show that $\mathbf{v}=\vect{0}_n$. Suppose that
$\mathbf{v}\ne\vect{0}_n$. Since graph $G$ is connected, there
exists $(i,j)\in \mathcal{E}$ such that $v_i\ne v_j$. This implies that
\begin{align}\label{eq:inequality_angle}
x_i-x_j=y_i-y_j+2\pi(v_i-v_j).
\end{align}
Since we have $\|B^{\top}\mathbf{y}\|_{\infty}\le \gamma$, we get
$|y_i-y_j|\le \gamma$. However, by equation~\eqref{eq:inequality_angle}, we have
\begin{align*}
|x_i-x_j|&=|y_i-y_j+2\pi(v_i-v_j)|\ge 2\pi|v_i-v_j|-|y_i-y_j|\\ & \ge
  2\pi-\gamma > \pi.
\end{align*}
However, this is a contradiction with the fact that $\mathbf{x}\in
\bperp$. Therefore, $\mathbf{v}=\vect{0}_n$ and the map $\xi$ is
one-to-one. Note that by part~\ref{p3:embeded_cohesive}, $\xi$ is also
surjective. Therefore, using~\cite[Corollary 7.10]{JML:03}, the map $\xi$ is a
diffeomorphism between $\bperp$ and $S^{G}(\gamma)$. This completes the proof of part~\ref{p4:diffeomorphic_S}.
\end{proof}

\section{Proof of Theorem~\ref{thm:existence_uniqueness_S}}\label{app:real_analytic}

Regarding part~\ref{p4:equivalence}, suppose that $\mathbf{x}^*$ is a
synchronization manifold for the Kuramoto model~\eqref{eq:synchronization_manifold-x}. Then
$\omega=B\mathcal{A}\sin(B^{\top}\mathbf{x}^*)$.  By left-multiplying both
side of this equation by $B^{\top}L^{\dagger}$, we get
\begin{align*}
B^{\top}L^{\dagger}\omega=B^{\top}L^{\dagger}B\mathcal{A}\sin(B^{\top}\mathbf{x}^*)=\prj\sin(B^{\top}\mathbf{x}^*)=\fK(\mathbf{x}^*).
\end{align*} 
On the other hand, suppose that $\mathbf{x}^*$ satisfies the
edge balance equation~\eqref{eq:synchronization_manifold-edges}. Then, if we left-multiply both side of this equation by $B\mathcal{A}$, we get
\begin{align*}
B\mathcal{A}B^{\top}L^{\dagger}\omega=B\mathcal{A}\fK(\mathbf{x}^*)=B\mathcal{A}(B^{\top}L^{\dagger}B\mathcal{A})\sin(B^{\top}\mathbf{x}^*)
\end{align*} 
Noting that $B\mathcal{A}B^{\top}=L$ and $LL^{\dagger}=I_n-\frac{1}{n}\vect{1}_n\vect{1}_n^{\top}$, we have
\begin{align*}
(I_n-\frac{1}{n}\vect{1}_n\vect{1}_n^{\top})\omega=(I_n-\frac{1}{n}\vect{1}_n\vect{1}_n^{\top})B\mathcal{A}\sin(B^{\top}\mathbf{x}^*). 
\end{align*} 
Moreover, we have $\vect{1}_n^{\top}B=0$ and since $\omega\in
\vect{1}_n^{\perp}$, we get $\vect{1}_n^{\top}\omega=0$. This implies
that $\omega=B\mathcal{A}\sin(B^{\top}\mathbf{x}^*)$.  This completes
the proof of part~\ref{p4:equivalence}.

Regarding part~\ref{p3:f_one_to_one}, since the function $\sin$ is real analytic, $\fK$ is real analytic. The proof of injectivity of $\fK$ on
the domain $S^{G}(\gamma)$ is a straightforward generalization of~\cite[Corollary 2]{AA-SS-VP:81}. 

Regarding part~\ref{p5:existence_uniqueness_S}, by part~\ref{p3:f_one_to_one} the map $\fK$ is one-to-one on
the domain $S^{G}(\gamma)$. Therefore, if there exists a
synchronization manifold $\mathbf{x}^*\in S^{G}(\gamma)$, it is
unique. The proof of the fact that $\mathbf{x}^*$ is locally
exponentially stable is given in~\cite[Lemma 2]{FD-MC-FB:11v-pnas}.


\section{Proof of Lemma~\ref{thm:maf-non-zero} and of a useful equality}\label{app:maf}

\begin{proof}[Proof of Lemma~\ref{thm:maf-non-zero}]
By definition of the minimum amplification factor, it is clear that, for every
  $\gamma\in [0,\tfrac{\pi}{2})$ and every
  $p\in[1,\infty)\union\{\infty\}$, we have $\alpha_p(\gamma)\ge
  0$. So, to prove the lemma, it suffices to show that, for every $\gamma\in [0,\frac{\pi}{2})$ and every $p\in
  [1,\infty)\cup\{\infty\}$, we have $\alpha_p(\gamma)\ne 0$. Suppose that for some
  $\gamma\in [0,\frac{\pi}{2})$ and some $p\in
  [1,\infty)\cup\{\infty\}$, we have $\alpha_p(\gamma)=0$. Since
  $D_p(\gamma)$ and $\{\mathbf{z}\in \Img(B^{\top})\mid
  \|\mathbf{z}\|_{p}=1\}$ are compact sets, there exist $\mathbf{y}_0\in
  D_p(\gamma)$ and $\mathbf{z}_0\in \Img(B^{\top})$ with the property
  that $\|\mathbf{z}_0\|_p=1$ such that
 \begin{align*}
 \prj\diag(\sinc(\mathbf{y}_0))\mathbf{z}_0=\vect{0}_m.
 \end{align*}
   By premultipling both side of the above equality by
   $\mathbf{z}_0^{\top}\mathcal{A}$, we get
   $\mathbf{z}_0^{\top}\mathcal{A}\prj\diag(\sinc(\mathbf{y}_0))\mathbf{z}_0=0$.
   On the other hand, we know
 \begin{align*} 
 \mathbf{z}_0^{\top}\mathcal{A}\prj=\mathbf{z}_0^{\top}\mathcal{A}B^{\top}L^{\dagger}B\mathcal{A}=\mathbf{z}^{\top}_0\prj^{\top}
   \mathcal{A} = \mathbf{z}^{\top}_0 \mathcal{A},
 \end{align*}
where the last equality is a direct consequence of $\prj \mathbf{z}_0 = \mathbf{z}_0$. This implies that
 $\mathbf{z}_0^{\top}\mathcal{A}\diag(\sinc(\mathbf{y}_0))\mathbf{z}_0=0$.
 Since both $\mathcal{A}$ and $\diag(\sinc(\mathbf{y}_0))$ are
 diagonal positive definite matrices, we have
 $\mathbf{z}_0=\vect{0}_m$.  This is a contradiction with the fact
 that $\|\mathbf{z}_0\|_p=1$.
\end{proof}

The next result connects the minimum gain of an invertible
operator $T$ with the norm of $T^{-1}$. 

\begin{lemma}\label{thm:min-amplification}
  Let $(V,\|\cdot\|_V)$ be a normed real vector space and
  $\map{T}{V}{V}$ be a bijective linear map. Then
  \begin{align*}
    \min\setdef{\|T\mathbf{x}\|_V}{\|\mathbf{x}\|_V=1}=\frac{1}{\|T^{-1}\|_{V}}.
  \end{align*} 
\end{lemma}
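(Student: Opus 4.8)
The plan is to use the bijectivity of $T$ to turn the infimum defining the minimum gain into the supremum defining $\|T^{-1}\|_V$, via the change of variables $y = Tx$. The cleanest way to see this: since $T$ is a bijection of $V$ onto itself, it restricts to a bijection of $V\setminus\{\vect{0}\}$ onto itself, so
$$\|T^{-1}\|_V=\sup_{y\neq \vect{0}}\frac{\|T^{-1}y\|_V}{\|y\|_V}
=\sup_{x\neq \vect{0}}\frac{\|T^{-1}(Tx)\|_V}{\|Tx\|_V}
=\sup_{x\neq \vect{0}}\frac{\|x\|_V}{\|Tx\|_V}
=\Big(\inf_{x\neq \vect{0}}\frac{\|Tx\|_V}{\|x\|_V}\Big)^{-1},$$
and the last infimum equals $\inf\setdef{\|Tx\|_V}{\|x\|_V=1}$ by homogeneity of the norm. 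Taking reciprocals is legitimate once one knows this infimum is strictly positive, which I would establish along the way.

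I would organize the write-up as two inequalities. For the lower bound, given any $x$ with $\|x\|_V=1$, write $x = T^{-1}(Tx)$ and apply the operator-norm inequality: $1 = \|x\|_V = \|T^{-1}(Tx)\|_V \le \|T^{-1}\|_V\,\|Tx\|_V$, hence $\|Tx\|_V \ge 1/\|T^{-1}\|_V > 0$; taking the infimum over the unit sphere gives $\min\setdef{\|Tx\|_V}{\|x\|_V=1}\ge 1/\|T^{-1}\|_V$ (and in particular this infimum is positive). For the reverse bound, fix $\varepsilon>0$ and choose a unit vector $y$ with $\|T^{-1}y\|_V \ge \|T^{-1}\|_V-\varepsilon$; then $x := T^{-1}y/\|T^{-1}y\|_V$ is a unit vector with $Tx = y/\|T^{-1}y\|_V$, so $\|Tx\|_V = 1/\|T^{-1}y\|_V \le 1/(\|T^{-1}\|_V-\varepsilon)$. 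Letting $\varepsilon\downarrow 0$ yields $\min\setdef{\|Tx\|_V}{\|x\|_V=1}\le 1/\|T^{-1}\|_V$, and combining the two inequalities gives the claimed equality.

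Finally, I would remark on why the infimum is genuinely attained, so that writing $\min$ is justified: in the finite-dimensional setting in which this lemma is applied (namely $V=\Img(B^{\top})$), the unit sphere $\setdef{x\in V}{\|x\|_V=1}$ is compact and $x\mapsto\|Tx\|_V$ is continuous, so the extreme value theorem applies. I do not expect any real obstacle in this proof; the only points requiring care are (i) making explicit that the substitution $y=Tx$ is a bijection of $V\setminus\{\vect{0}\}$, which is precisely where surjectivity and injectivity of $T$ are used, and (ii) the positivity of the infimum, which both legitimizes the reciprocal manipulation and (together with compactness) ensures attainment.
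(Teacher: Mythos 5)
Your proposal is correct and follows essentially the same route as the paper: the key step in both is the change of variables $y=T\mathbf{x}$, which converts the infimum of $\|T\mathbf{x}\|_V/\|\mathbf{x}\|_V$ into the reciprocal of the supremum defining $\|T^{-1}\|_V$. Your additional remarks on positivity of the infimum and its attainment via compactness are sensible refinements but do not change the substance of the argument.
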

\begin{proof}
  It is well-known and elementary to show that
  \begin{align}\label{eq:scaling}
    \min\setdef{\|T\mathbf{x}\|_V}{\|\mathbf{x}\|_V=1}=\min\left\{\frac{\|T\mathbf{x}\|_V}{\|\mathbf{x}\|_V}\
    \Big\vert \ \mathbf{x}\ne 0\right\}.
  \end{align}
  Because the linear map $T$ is invertible, for every $\mathbf{x}\in
  V$ such that $\mathbf{x}\ne \mathbf{0}$,
  \begin{align}\label{eq:recip}
    \frac{\|T\mathbf{x}\|_{V}}{\|\mathbf{x}\|_V}
    =\frac{1}{\frac{\|\mathbf{x}\|_V}{\|T\mathbf{x}\|_V}}=\frac{1}{\frac{\|T^{-1}\mathbf{y}\|_V}{\|\mathbf{y}\|_V}},
  \end{align}
  where $\mathbf{y}=T\mathbf{x}$.  Therefore, by taking the minimum of
  both side of the equation~\eqref{eq:recip} over $\mathbf{x}\ne
  \mathbf{0}$, we obtain
\begin{multline*}
  \min\left\{\frac{\|T\mathbf{x}\|_{V}}{\|\mathbf{x}\|_V}\ \Big\vert \
  \mathbf{x}\ne 0\right\}  =\min
  \left\{\frac{1}{\frac{\|T^{-1}\mathbf{y}\|_V}{\|\mathbf{y}\|_V}}\
  \Big\vert \ \mathbf{y}\ne \mathbf{0}\right\}\\  =\frac{1}{\max\left\{\frac{\|T^{-1}\mathbf{y}\|_V}{\|\mathbf{y}\|_V}\
  \Big\vert \ \mathbf{y}\ne \mathbf{0}\right\}} =\frac{1}{\|T^{-1}\|_V}.
\end{multline*}
 The result follows by recalling
equation~\eqref{eq:scaling}.
\end{proof}

\section{Proof of Lemma~\ref{lem:Q-inverse}}\label{app:lem:Q-inverse}

For every $\mathbf{y}\in D_{p}(\gamma)$, define the linear operator
$T_{\mathbf{y}}:\Img(B^{\top})\to \Img(B^{\top})$ by 
\begin{align*}
T_{\mathbf{y}}(\mathbf{z}) = B^{\top}L^{\dagger}_{\sinc(\mathbf{y})}B\mathcal{A}\mathbf{z}.
\end{align*} 
Let $\mathbf{z}\in \Img(B^{\top})$. Then there exists $\xi\in \vect{1}_n^{\perp}$ such that $\mathbf{z}=B^{\top}\xi$. This
implies that
\begin{align*}
Q(\mathbf{y})\scirc T_{\mathbf{y}} (\mathbf{z})&=Q(\mathbf{y})(B^{\top}L^{\dagger}_{\sinc(\mathbf{y})}B\mathcal{A})(B^{\top}\xi)\\
  & =\prj\diag(\sinc(\mathbf{y}))(B^{\top}L^{\dagger}_{\sinc(\mathbf{y})}L\xi)
  \\ & = B^{\top}L^{\dagger}L_{\sinc(\mathbf{y})}L^{\dagger}_{\sinc(\mathbf{y})}L\xi. 
\end{align*}
Since $\mathbf{y}\in D_p(\gamma)$, we have
$\dim(\Img(L_{\sinc(\mathbf{y})}))=n-1$. This implies that,
\begin{align*}
L^{\dagger}_{\sinc(\mathbf{y})}L_{\sinc(\mathbf{y})}=L_{\sinc(\mathbf{y})}L^{\dagger}_{\sinc(\mathbf{y})}=I_n-\tfrac{1}{n}\vect{1}_n\vect{1}_n^{\top}.
\end{align*}
Therefore, we get
\begin{align*}
Q(\mathbf{y})\scirc T_{\mathbf{y}} (\mathbf{z})=B^{\top}L^{\dagger}L\xi=B^{\top}\xi=\mathbf{z}.
\end{align*}
Since both $Q(\mathbf{y})$ and $T_{\mathbf{y}}$ are linear operators on
$\Img(B^{\top})$, we deduce that $Q(\mathbf{y})$ is
invertible and, for every $\mathbf{z}\in \Img(B^{\top})$:
\begin{align*}
(Q(\mathbf{y}))^{-1}\mathbf{z}=T_{\mathbf{y}}(\mathbf{z}) = B^{\top}L^{\dagger}_{\sinc(\mathbf{y})}B\mathcal{A}\mathbf{z}.
\end{align*}
This completes the proof of the lemma.

\section{A useful result and proof of Lemma~\ref{lem:inequality}}\label{app:inequality}
Regarding part~\ref{p1:inequality}, Note that $L$ is real and
  symmetric. Therefore, using singular-value
decomposition~\cite[Corollary 2.6.6]{RAH-CRJ:12}, there exists an orthogonal
matrix $U\in \real^{n\times n}$ such that:
\begin{align*}
  L=U\diag(0,\lambda_2,\ldots,\lambda_n)U^\top,
\end{align*}
where $0<\lambda_2\le \ldots \le \lambda_n$ are ordered eigenvalues of
$L$. Then, using~\cite[Chapter 6, \S 2, Corollary 1]{ABI-TNEG:03} the matrix $L^{\dagger}$ has the following
singular-value decomposition:
\begin{align*}
L^{\dagger}=U\diag\left(0,\frac{1}{\lambda_2},\ldots,\frac{1}{\lambda_n}\right)U^\top.
\end{align*}
Since $\omega\in \vect{1}_n^{\perp}=\Img(B)$, there exists
$\mathbf{y}\in \real^n$ which satisfies $\omega=B\mathbf{y}$. Therefore, we can write
\begin{align*}
  \left\| B^{\top}L^{\dagger}\omega \right\|_2^2
  &=\left\|B^{\top}U\diag\left(0,\frac{1}{\lambda_2},\ldots,\frac{1}{\lambda_n}\right)U^\top B\mathbf{y}\right\|_2^2
  \\
  &=\mathbf{y}^{\top}\left(B^{\top}U\diag\left(0,\frac{1}{\lambda_2},\ldots,\frac{1}{\lambda_n}\right)U^\top B\right)^2\mathbf{y}.
\end{align*}
Since we have
\begin{align*}
\diag\left(0,\frac{1}{\lambda_2},\ldots,\frac{1}{\lambda_n}\right)&\preceq
\diag\left(\frac{1}{\lambda_2},\frac{1}{\lambda_2},\ldots,\frac{1}{\lambda_2}\right),\\
\mathbf{0}&\preceq
B^{\top}U\diag\left(0,\frac{1}{\lambda_2},\ldots,\frac{1}{\lambda_n}\right)U^\top
  B,
\end{align*}
we obtain
\begin{align*}
   \left(B^{\top}U\diag\left(0,\frac{1}{\lambda_2},\ldots,\frac{1}{\lambda_n}\right)U^\top B\right)^2
  &\preceq\frac{1}{\lambda^2_2}\left(B^{\top}UU^\top B\right)^2.
\end{align*}
Therefore, 
\begin{align}
  \left\| B^{\top} L^{\dagger} B\mathbf{y}\right\|^2_2
  &\le \frac{1}{\lambda_2^2}\mathbf{y}^{\top}\left(B^{\top}UU^\top B\right)^2\mathbf{y}
  \label{eq:ineq:tmp}
  \\
  &=\frac{1}{\lambda_2^2}\left\|B^{\top}UU^\top B\mathbf{y}\right\|^2_{2}
  =\frac{1}{\lambda_2^2}\left\|B^{\top}\omega\right\|^2_2.
  \nonumber
\end{align}
This concludes the proof of inequality. Regarding the equality,
suppose that $i\in \{2,\ldots,n\}$ is the smallest positive integer such that
$\lambda_i\ne \lambda_2$. Note that, by the above analysis, the
equality holds for $\omega=B\mathbf{y}\in \vect{1}_n^{\perp}$ if and only if 
\begin{align}\label{eq:equality1}
\left\| B^{\top}
  L^{\dagger}B\mathbf{y}\right\|^2_2=\frac{1}{\lambda_2^2}\mathbf{y}^{\top}\left(B^{\top}UU^\top
  B\right)^2\mathbf{y}. 
\end{align}
We define the diagonal matrix $\Lambda$ by
\begin{align*}
  \Lambda=\diag\Big(\tfrac{1}{\lambda_2},\
  \underbrace{0,\ldots,0}_{i-1}\ ,\left(\tfrac{1}{\lambda_2}-\tfrac{1}{\lambda_i}\right),\ldots,
  \left(\tfrac{1}{\lambda_2}-\tfrac{1}{\lambda_n}\right) \Big).
\end{align*}
This implies that the equality~\eqref{eq:equality1} holds if and only if 
\begin{align}\label{eq:equality2}
\mathbf{y}^{\top}B^{\top}U\Lambda U^{\top}B^{\top}\mathbf{y}=0. 
\end{align}
Since $U\Lambda U^{\top}\succeq 0$, the equality~\eqref{eq:equality2}
holds if and only if $\omega=B^{\top}\mathbf{y}\in \Ker(U\Lambda
U^{\top})$. However, we know that
\begin{align*}
  \Ker(U\Lambda U^{\top})= \mathrm{span}\{v_2,\ldots,v_{i-1}\},
\end{align*}
where $v_k$ is the eigenvector associated to the eigenvalue
$\lambda_k$. This completes the proof of part~\ref{p1:inequality}. Regrading part~\ref{p2:T0_T2}, if $\omega\in \vect{1}^{\perp}_n$
satisfies test~\eqref{test:best-known}, then we have
$\|B^{\top}\omega\|_{2}<\lambda_2(L)$. Therefore, by part~\ref{p1:inequality},
we have 
\begin{align*}
\|B^{\top}L^{\dagger}\omega\|_{2} \le \frac{1}{\lambda_2(L)}
  \|B^{\top}\omega\|_2<1.
\end{align*}
This means that $\omega$ satisfies test~\eqref{test:2-norm-new}.

\bibliographystyle{plainurl}
\bibliography{alias,Main,FB}

%

\begin{IEEEbiography}[{\includegraphics[width=1in,height=1.25in,clip,keepaspectratio]{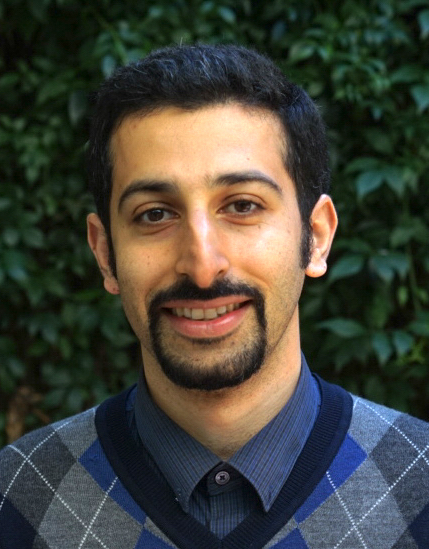}}]{Saber
   Jafarpour}(M'16) is a Postdoctoral researcher with the Mechanical
  Engineering Department and the Center for Control, Dynamical Systems
  and Computation at the University of California, Santa Barbara. He
  received his Ph.D. in 2016 from the Department of Mathematics and
  Statistics at Queen's University. His research interests include
  analysis of network systems with application to power grids and
  geometric control theory.
\end{IEEEbiography}

\begin{IEEEbiography}[{\includegraphics[width=1in,height=1.25in,clip,keepaspectratio]{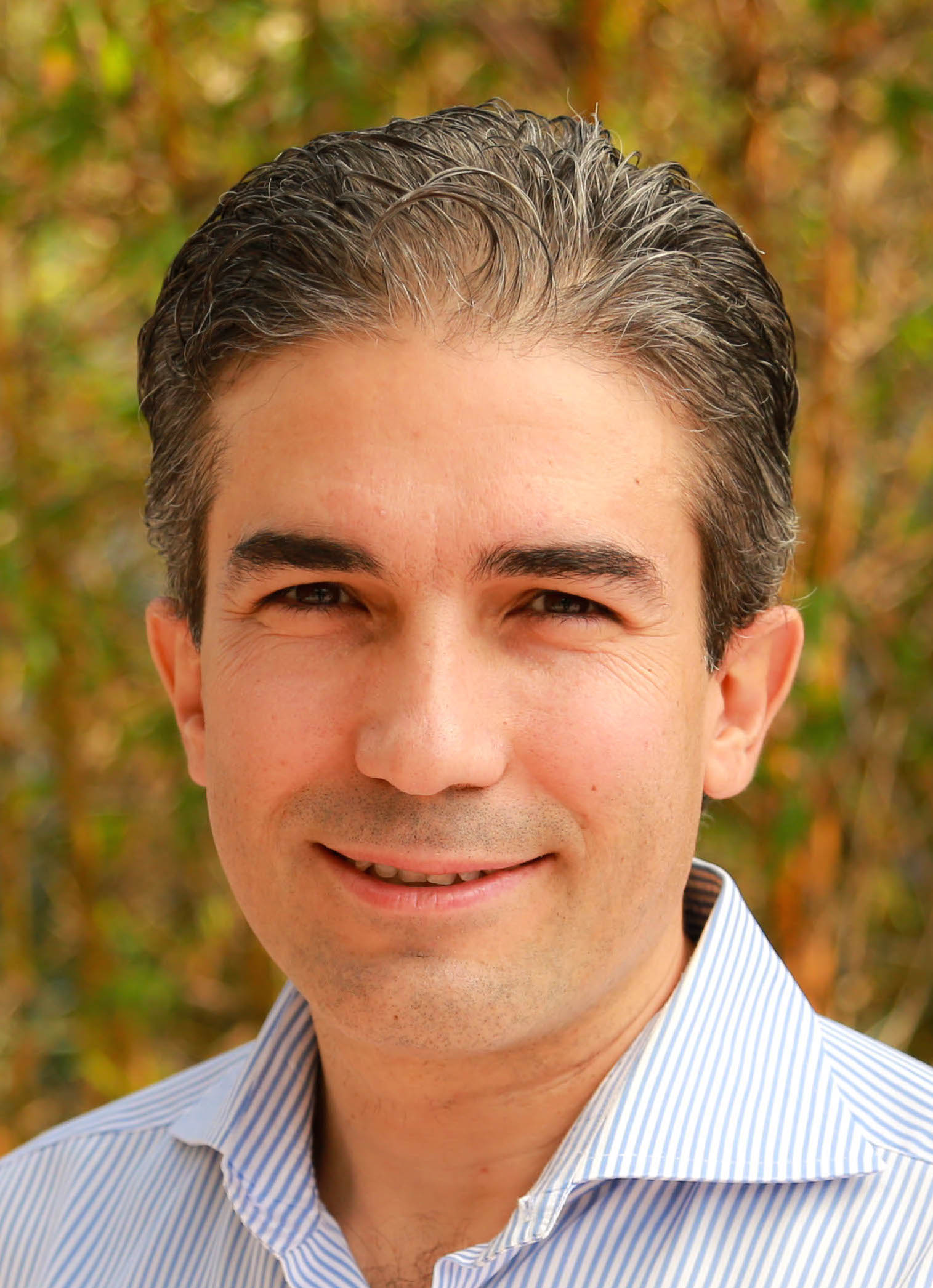}}]{Francesco Bullo} (IEEE S'95-M'99-SM'03-F'10) is a Professor with the Mechanical Engineering Department and the
 Center for Control, Dynamical Systems and Computation at the University of
 California, Santa Barbara. He was previously associated with the
 University of Padova, the California Institute of Technology, and the
 University of Illinois. His research interests focus on network systems
 and distributed control with application to robotic coordination, power
 grids and social networks. He is the coauthor of “Geometric Control of
 Mechanical Systems” (Springer, 2004) and “Distributed Control of Robotic
 Networks” (Princeton, 2009); his forthcoming "Lectures on Network Systems"
 is available on his website.  He received best paper awards for his work
 in IEEE Control Systems, Automatica, SIAM Journal on Control and
 Optimization, IEEE Transactions on Circuits and Systems, and IEEE
 Transactions on Control of Network Systems.  He is a Fellow of IEEE and
 IFAC.  He has served on the editorial boards of IEEE, SIAM, and ESAIM
 journals, and serves as 2018 IEEE CSS President.
\end{IEEEbiography}





\end{document}